\newtheorem{theo}{Theorem}
\newtheorem{defi}{Definition}
\newtheorem{prop}[theo]{Proposition}
\newtheorem{lem}[theo]{Lemma}
\newtheorem{rem}[theo]{Remark}
\date{}
\title{From forward integrals to Wick-Itô integrals: the fractional Brownian motion and the Rosenblatt process cases}
\author{Benjamin Arras
\thanks{Electronic address: \texttt{arrasbenjamin@gmail.com}}}
\begin{document}
\maketitle

\begin{abstract}
In this paper, we combine Hida distribution theory and Sobolev-Watanabe-Kree spaces in order to study finely the link between forward integrals obtained by regularization and Wick-Itô integrals with respect to fractional Brownian motion and the Rosenblatt process. The new methodology developed in this paper allows to retrieve results for fractional Brownian motion and to obtain new results regarding the Rosenblatt process. In particular, an Itô formula for functionals of the Rosenblatt process is obtained which holds in the space of square-integrable random variables.
\end{abstract}
\noindent
{\sl AMS classification\/}: 60\,H\,40, 60\,H\,05, 60\,H\,07, 60\,G\,12, 60\,G\,18, 60\,G\,22.\\
\\
{\sl Key words\/}: Stochastic calculus, fractional Brownian motion, Rosenblatt process, white noise distribution theory.

\section*{Introduction}
\subsection*{Context}
Since the construction of the Itô integral, there have been many approaches to extend stochastic integration for integrand and integrator processes which are not covered by the classical theory. Forward integration is a natural generalization of Itô integration allowing for anticipating integrands and for more general integrator processes. There are basically two approaches in order to define a forward integral with respect to Brownian motion: by means of Wiener analysis (and/or white noise analysis) as done in  \cite{BM82,NP88,KR88,AP91} and by regularization techniques first introduced in \cite{RV93}. The Brownian forward integral allows for anticipating integrands, is well approximated by forward Riemann sums and is linear when the integrand is a constant random variable.\\
\\
\noindent
The regularization technique developed by F. Russo and P. Vallois in \cite{RV93} is an almost pathwise method allowing to define forward, backward and symmetric integrals with respect to general integrators. This forward integral coincides essentially with the classical Itô integral when the integrator is a semi-martingale. Moreover, in the Brownian motion case, thanks to a Wiener analysis point of view, it is possible to link the forward integral defined by regularization techniques and the forward integral introduced thanks to Malliavin calculus tools (see Theorem 2.1 of \cite{RV93}). They are the same when the limiting procedure of the regularization is strengthened (see Remark 2.2 of \cite{RV93}).\\
\\
\noindent
Regarding fractional Brownian motion (fBm), the regularization techniques can be applied readily when $H>1/2$. Indeed, in this case, fBm is a zero quadratic variation process which admits a modification whose sample path are almost surely $\eta$-Hölder continuous, for any $1/2<\eta<H$. Therefore, the forward, backward and symmetric integrals with respect to fBm exist and essentially coincide (for regular enough integrands). In \cite{AN03}, the explicit link is made between the symmetric integral and the divergence operator associated with fractional Brownian motion for $H>1/2$. In this Gaussian framework, a term involving the stochastic derivative of the integrand appears. Moreover, when $H>1/2$, the Young type integral with respect to fractional Brownian motion can be defined and coincide with the forward integral with respect to fBm obtained by regularization (see Proposition $3$ page $155$ of \cite{RV07}). In this framework ($H>1/2$), there is therefore an unequivocal stochastic integral with respect to fBm defined by pathwise methods.\\
\\
Moreover, fBm belongs to the family of Hermite processes. These processes appear in non-central limit theorems (see e.g. \cite{DM79,T75,T79}). They are defined, for $d\geq 1$, by:
\begin{align*}
\forall t>0\quad X^{H,d}_t=c(H_0)\int_{\mathbb{R}}...\int_{\mathbb{R}}\left(\int_0^t\prod_{j=1}^d(s-x_j)^{H_0-1}_+ds\right)dB_{x_1}...dB_{x_d},
\end{align*}
where $\{B_x:\ x\in\mathbb{R}\}$ is a two-sided Brownian motion, $c(H_0)$ is a normalizing constant such that $\mathbb{E}[|X^{H,d}_1|^2]=1$ and $H_0=\frac{1}{2}+\frac{H-1}{d}$ with $H\in(\frac{1}{2},1)$. For $d=1$, one recovers fBm denoted by $\{B^H_t\}$ and for $d=2$, the process is named the Rosenblatt process, denoted by $\{X^H_t\}$ in the sequel. Hermite processes share many properties in common with fBm. Indeed, they are $H$-self-similar processes with stationary increments, have the same covariance structure and so their sample paths are almost-surely $\eta$-Hölder continuous, for every $1/2<\eta<H$. In particular, regarding their stochastic calculus, the regularization techniques of F. Russo and P. Vallois apply readily (as well as Young integration) so that the forward, backward, symmetric and Young integrals are well defined and coincide for smooth enough integrands.\\
\\
However, since their stochastic natures are very different (they live in different Wiener chaoses), one expects that the stochastic properties of the stochastic integrals with respect to Hermite processes of different orders would be quite different. This phenomenon does not seem reachable by purely pathwise integration methods and has been first observed partially in \cite{MR2374640} for the Rosenblatt process. Indeed, in \cite{MR2374640}, based on another representation of the Rosenblatt process, the author studies the link between the forward integral obtained by regularization techniques and the divergence integral with respect to the Rosenblatt process. He notes the appearance of two trace terms which differs significantly from the Gaussian case (see Theorem $2$ of Section $7$ in \cite{MR2374640}). However, existence of these two trace terms is not fully studied in \cite{MR2374640} even in the case where the integrand process is a smooth functional of the Rosenblatt process (see Theorem $3$ of Section $8$ in \cite{MR2374640}). Therefore, there is still room for improvements.\\
\\
In this paper, we study the link between forward integration by regularization techniques and Wick-Itô integration with respect to fractional Brownian motion and with respect to the Rosenblatt process. We obtain explicit decompositions of the forward integrals in both cases when the integrand processes are smooth functionals of the integrator processes (see Theorems \ref{Main1} and \ref{Main2} below). In particular, we obtain existence and explicit simple formulae for the two trace terms appearing in the Rosenblatt process case. The methodology we develop is based on Hida distribution theory and on Sobolev-Watanabe-Kree spaces. We comment briefly on it.
\begin{itemize}
\item The first step in our procedure is to compute the $S$-transform of $F(X^{H,d}_t)(X^{H,d}_{t+\epsilon}-X^{H,d}_t)$, for $\epsilon>0$, and to identify each terms thanks to Hida distribution theory. The formulae hold true in $(S)^*$, the Hida distributions space.  
\item Then, we prove that each term of the decomposition is a real random variable by using appropriate stochastic gradient operators (and their adjoints) naturally linked to the integrator process. The regularity of the integrand process plays a role in these representations.
\item Finally, we prove convergence in $(L^2)$, the space of square-integrable random variables, for each term appearing in the decompositions. This last step ensures that the forward integrals obtained coincide with the forward integral defined by regularization. 
\end{itemize}
This methodology is applied to the fBm and the Rosenblatt process cases ($d=1,2$). Nevertheless, it seems robust enough to possibly handle the following extensions:
\begin{itemize}
\item Any Hermite processes of any order $d$, the difficulty being the increasing number of terms appearing in the decomposition of $F(X^{H,d}_t)(X^{H,d}_{t+\epsilon}-X^{H,d}_t)$ to analyse.
\item Any self-similar processes with stationary increments with $H\in (1/2,1)$ represented by:
\begin{align*}
\forall t>0\quad Y^{H,d}_t=c(H_0)\int_{\mathbb{R}}...\int_{\mathbb{R}}\left(\int_0^tq_{H,d}(s-x_1,...,s-x_d)ds\right)dB_{x_1}...dB_{x_d},
\end{align*}
where $q$ is a symmetric function on $\mathbb{R}^d$ verifying appropriate conditions (see e.g. \cite{MO86}).
\end{itemize}
Due to the prominent roles of the fBm and of the Rosenblatt process, we only study these cases. The generalizations will be done in subsequent papers.

\subsection*{Main results and some notations}
Before stating the main results of this paper we introduce some notations. We denote by $I_d$ the multiple Wiener-Itô integral of order $d$. Moreover, for any random variable $X$ and any $k\geq 1$, we denote by $\kappa_k(X)$ the $k$-th cumulant of $X$ when it exists. We denote by $(\nabla^{H-\frac{1}{2}})^*$ the adjoint of the stochastic gradient operator naturally associated with fractional noise (see Propositions \ref{ExAdGrad1} and \ref{RepAdGrad1}). We denote by $(\nabla^{(2)})^*$ and by $(\nabla)^*$ the adjoints of the first and second order stochastic gradients associated with the white noise (see Proposition \ref{ConAdOp}). We denote by $F, F', F'', F^{(3)},...$ the functional and its derivatives. Finally, we define the forward integral by regularization as the following limit in probability (for $F$ smooth enough):
\begin{align*}
\underset{\epsilon\rightarrow 0^+}{\lim}\int_a^b F(X^{H,d}_t)\dfrac{(X^{H,d}_{(t+\epsilon)\wedge b}-X^{H,d}_t)}{\epsilon}dt\overset{\mathbb{P}}{=}\int_a^bF(X^{H,d}_t)d^{-}X^{H,d}_t
\end{align*}
\vspace{0.2cm}
\begin{theo}\label{Main1}
Let $(a,b)\subset\mathbb{R}_+$. Let $F$ be a continuously differentiable function on $\mathbb{R}$ such that:
\begin{align*}
\forall x\in\mathbb{R},\ \max\{F(x),F'(x)\}\leq Ce^{\lambda x^2},
\end{align*}
for some $C>0$ and $\lambda>0$ with $\lambda<1/(4b^{2H})$. Then, we have, in $\big(L^2\big)$:
\begin{align*}
\int_a^bF(B^H_t)d^{-}B^H_t=(\nabla^{H-\frac{1}{2}})^*(F(B^H_{.}))+H\int_a^bt^{2H-1}F'(B^H_t)dt.
\end{align*}
\end{theo}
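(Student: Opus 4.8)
The plan is to follow the three-step methodology outlined in the introduction. First I would compute the $S$-transform of the rescaled increment functional $F(B^H_t)\,\epsilon^{-1}(B^H_{(t+\epsilon)\wedge b}-B^H_t)$ and integrate it in $t$ over $(a,b)$. Using the Gaussian structure of fBm and the chain rule for the $S$-transform (writing $F(B^H_t)$ via its action against the Gaussian kernel, which is licit by the exponential growth bound $F(x)\le Ce^{\lambda x^2}$ together with $\lambda<1/(4b^{2H})$ so that all relevant Gaussian integrals converge), one obtains in $(S)^*$ a decomposition into two pieces: a ``Wick product'' piece of the form $S[F(B^H_.)\diamond \dot B^H_.]$ and a ``trace'' piece coming from the $L^2$-pairing between the kernel of $F'(B^H_t)$ and the kernel of the increment, which after letting $\epsilon\to 0$ produces the density $H t^{2H-1}$ — this is exactly the derivative of $\tfrac12 t^{2H}=\tfrac12\mathbb{E}[(B^H_t)^2]$, the expected place for the Itô correction when $H>1/2$.

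Second, I would identify the two $(S)^*$-limits with genuine square-integrable random variables. The trace term $H\int_a^b t^{2H-1}F'(B^H_t)\,dt$ is manifestly in $(L^2)$ by the growth bound on $F'$ and the same constraint $\lambda<1/(4b^{2H})$. For the Wick term, I would invoke Propositions \ref{ExAdGrad1} and \ref{RepAdGrad1}: the process $s\mapsto F(B^H_s)\mathbf{1}_{(a,b)}(s)$ lies in the domain of $(\nabla^{H-\frac12})^*$ (again using the differentiability and exponential growth of $F$ to control the relevant Malliavin-Sobolev norms), and its image under $(\nabla^{H-\frac12})^*$ has $S$-transform equal to the Wick piece computed in step one. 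By injectivity of the $S$-transform on $(S)^*$, the Wick limit equals $(\nabla^{H-\frac12})^*(F(B^H_.))$.

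Third, I would establish that the convergence as $\epsilon\to 0^+$ actually holds in $(L^2)$, not merely in $(S)^*$, for each term of the pre-limit decomposition; by uniqueness of limits this both confirms the identification above and shows the forward integral by regularization (a priori only a limit in probability) coincides with the $(L^2)$-limit, hence with the stated right-hand side. This is the step I expect to be the main obstacle: one must produce an $\epsilon$-uniform bound, typically by expanding each term into Wiener chaos, writing the chaos kernels explicitly in terms of the fractional kernel $K_H$ and the Taylor coefficients of $F$, and then showing that the chaos norms converge (dominated convergence on the kernels, with a summable-in-chaos-order majorant supplied by the Gaussian bound on $F$ and its derivative). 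The delicate points are the interchange of the $dt$-integral with the chaos expansion and the uniform control of the singular factor $\epsilon^{-1}$ against the $H$-Hölder regularity of the fractional kernel near the diagonal, which is where the hypothesis $H>1/2$ — implicit in the fBm setting here — is essential.
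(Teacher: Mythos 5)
Your outline reproduces the paper's announced three-step methodology, and your first two steps are essentially what the paper does (Propositions \ref{Decfbm1} and \ref{Decfbm2} for the $(S)^*$ decomposition, Propositions \ref{ExAdGrad1} and \ref{RepAdGrad1} to recognize the Wick piece as $(\nabla^{H-\frac{1}{2}})^*$ applied to $F(B^H_\cdot)$). The genuine gap is in your third step, which is the heart of the theorem and is only gestured at. Your plan --- expand everything in Wiener chaos ``with the Taylor coefficients of $F$'' and dominate chaos order by chaos order uniformly in $\epsilon$ --- cannot be run as stated: $F$ is only $C^1$ with Gaussian growth, so no Taylor expansion is available (one must use Hermite expansions relative to $\gamma_{t^{2H}}$, as the paper does in Proposition \ref{Gradfbm}), and the existence of a summable-in-chaos-order, $\epsilon$-uniform majorant is precisely the estimate that has to be produced; you give no indication of how. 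The paper avoids chaos-order summability altogether: it rewrites $\int_a^b F(B^H_t)\diamond\frac{B^H_{t+\epsilon}-B^H_t}{\epsilon}dt$ as $\nabla^*$ applied to the $L^2(\mathbb{R})\otimes(\mathcal{W}^{1,2})$-valued element $\frac{1}{\Gamma(H-\frac{1}{2})}\int_a^bF(B^H_t)\big(\int_t^{t+\epsilon}(s-\cdot)_+^{H-\frac{3}{2}}\frac{ds}{\epsilon}\big)dt$, invokes the continuity of $\nabla^*$ (Proposition \ref{ConAdOp}) to reduce the $(L^2)$-convergence to convergence of this integrand, and proves the latter by an explicit covariance computation: the beta identity turns the $x$-integral into increments of $|t_1-t_2|^{2H-2}$, Meyer's inequality together with the Gaussian growth bound controls $\langle F(B^H_{t_1});F(B^H_{t_2})\rangle_{(\mathcal{W}^{1,2})}$, a pointwise limit and a uniform bound $C|t_1-t_2|^{2H-2}$ on the inner double integral are established, and Hardy--Littlewood--Sobolev plus dominated convergence conclude. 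Nothing in your proposal supplies this (or a substitute), so the $\epsilon^{-1}$-versus-diagonal-singularity estimate that you yourself flag as the main obstacle remains unproved.

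A second, smaller gap: you declare the trace term unproblematic, but what actually needs proof is (i) the chain-rule identity $\nabla^{H-\frac{1}{2}}(F(B^H_t))(s,\cdot)=c_H\big(\int_0^t|s-r|^{2H-2}dr\big)F'(B^H_t)$ for $F$ merely $C^1$ with Gaussian growth --- the smooth chain rule of Proposition \ref{ProdChain} does not apply here, and the paper derives the identity through an integration by parts on Hermite coefficients (Proposition \ref{Gradfbm}) --- and (ii) the $(L^2)$-convergence of the $\epsilon$-averaged trace term to $H\int_a^bt^{2H-1}F'(B^H_t)dt$ (Proposition \ref{ConGradfbm}), together with the verification (Remark \ref{Cond}) that the integrability hypotheses of Proposition \ref{Decfbm2} hold under the assumption $\lambda<1/(4b^{2H})$. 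These points are short but not automatic, and your proposal omits them.
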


\begin{rem}
\begin{itemize}
\item This result should be compared with Proposition $3$ of \cite{AN03} where a similar result holds true. The authors use the representation of the fBm as a Wiener integral on a compact interval and the intrinsic Malliavin calculus with respect to it whereas we use its representation as a Wiener integral on $\mathbb{R}$ and stochastic gradient operators on the white noise space.
\item As a straightforward corollary, we obtain the following well-known Itô formula for $F\in C^2(\mathbb{R})$ with appropriate growth conditions:
\begin{align*}
F(B^H_b)-F(B^H_a)=(\nabla^{H-\frac{1}{2}})^*(F'(B^H_{.}))+H\int_a^bt^{2H-1}F"(B^H_t)dt.
\end{align*}
\end{itemize}
\end{rem}

\begin{theo}\label{Main2}
Let $(a,b)\subset\mathbb{R}_+$. Let $F$ be an infinitely differentiable function with polynomial growth at most (together with its derivative). Then, we have, in $\big(L^2\big)$:
\begin{align*}
\int_a^b F(X^H_t)d^-X^H_t&=d(H)\big(\nabla^{(2)}\big)^*\bigg(\int_a^b F(X^H_t)\dfrac{(t-.)^{\frac{H}{2}-1}_+(t-\#)^{\frac{H}{2}-1}_+}{\Gamma(\frac{H}{2})^2}dt\bigg)+B(H)\nabla^*\bigg(\int_a^b F'(X^H_t)(t-.)^{\frac{H}{2}-1}_+\\
&\times I_1(l^H_{t,t})dt\bigg)+H\int_{a}^bt^{2H-1}F'(X^H_t)dt+\frac{H}{2}\kappa_3(X^H_1)\int_a^bt^{3H-1}F^{(2)}(X^H_t)dt\\
&+C(H)\int_a^bI_2(e^H_{t,t})F^{(2)}(X^H_t)dt,
\end{align*}
with,
\begin{align*}
l^H_{t,t}(x)&=\int_0^t(u-x)^{H/2-1}_+|t-u|^{H-1}du,\\
e^H_{t,t}(x_1,x_2)&=\int_0^t\int_0^t(u-x_1)^{\frac{H}{2}-1}_+(v-x_2)^{\frac{H}{2}-1}_+\mid u-t\mid^{H-1} \mid v-t\mid^{H-1}dudv,\\
B(H)&=\dfrac{4d(H)}{\big(\Gamma(\frac{H}{2})\big)^2}\sqrt{\dfrac{H(2H-1)}{2}},\\
C(H)&=\dfrac{2 d(H)}{\big(\Gamma(\frac{H}{2})\big)^2}H(2H-1).
\end{align*}
\end{theo}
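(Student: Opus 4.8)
The plan is to carry out, with $d=2$, the three-step programme described in the introduction.

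\emph{Step 1 (the $S$-transform and its five components).} Fix $\epsilon>0$ and $t\in(a,b)$, write $X^H_t=I_2(L_t)$, and put $\ell_\epsilon=L_{t+\epsilon}-L_t$. Using $S[\Phi](\xi)=\mathbb{E}[\Phi(\cdot+\xi)]$ together with the effect of the Cameron--Martin translation on a double Wiener integral, $X^H_t(\cdot+\xi)=X^H_t+2I_1(\langle L_t,\xi\rangle)+\langle L_t,\xi^{\otimes2}\rangle$, one computes $S\big[F(X^H_t)(X^H_{t+\epsilon}-X^H_t)\big](\xi)$. Equivalently one applies the product formula
\begin{align*}
F(X^H_t)\,I_2(\ell_\epsilon)=F(X^H_t)\diamond I_2(\ell_\epsilon)+2\,\delta\big(\langle D[F(X^H_t)],\ell_\epsilon\rangle_1\big)+\langle D^2[F(X^H_t)],\ell_\epsilon\rangle_2,
\end{align*}
(the subscripts recording the number of contracted variables) and expands the Malliavin derivatives via $D_sX^H_t=2I_1(L_t(s,\cdot))$ and $D^2_{s_1,s_2}X^H_t=2L_t(s_1,s_2)$. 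This produces exactly five components, each identified with an element of $(S)^*$ through the characterisation of Hida distributions by their $S$-transform: (i) the Wick product $F(X^H_t)\diamond I_2(\ell_\epsilon)$; (ii) $4\,\delta\big(s\mapsto F'(X^H_t)\,I_1\big((L_t\otimes_1\ell_\epsilon)(\cdot,s)\big)\big)$, where $(L_t\otimes_1\ell_\epsilon)(y,s)=\int_{\mathbb{R}}L_t(s_1,y)\ell_\epsilon(s_1,s)\,ds_1$; (iii) $2F'(X^H_t)\langle L_t,\ell_\epsilon\rangle$; (iv) $4F^{(2)}(X^H_t)\langle L_t\otimes_1L_t,\ell_\epsilon\rangle$; and (v) $4F^{(2)}(X^H_t)\,I_2(k_t^\epsilon)$, with $k_t^\epsilon$ the symmetrisation of $(y_1,y_2)\mapsto\int_{\mathbb{R}^2}L_t(s_1,y_1)L_t(s_2,y_2)\ell_\epsilon(s_1,s_2)\,ds_1\,ds_2$. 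Components (iv) and (v) vanish identically when $X^H_t$ is replaced by a Gaussian process; this is why they do not show up in Theorem~\ref{Main1}.

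\emph{Step 2 (kernels and constants).} Every deterministic kernel above is simplified with the fractional-calculus identity
\begin{align*}
\int_{\mathbb{R}}(u-s)_+^{\beta-1}(v-s)_+^{\beta-1}\,ds=\frac{\Gamma(\beta)\Gamma(1-2\beta)}{\Gamma(1-\beta)}\,|u-v|^{2\beta-1},\qquad\beta=\tfrac H2 .
\end{align*}
Since $L_t(x_1,x_2)=\tfrac{d(H)}{\Gamma(H/2)^2}\int_0^t(u-x_1)_+^{H/2-1}(u-x_2)_+^{H/2-1}\,du$, the $\epsilon$-increment carried by $L_t\otimes_1\ell_\epsilon$ collapses (after normalisation by $\epsilon$ and $\epsilon\to0$) onto $(t-s)_+^{H/2-1}\,l^H_{t,t}(\cdot)$; the scalar in (iv) is the $\epsilon$-increment of $\langle L_t\otimes_1L_t,L_t\rangle=\tfrac18\kappa_3(X^H_t)=\tfrac18 t^{3H}\kappa_3(X^H_1)$; the kernel $k_t^\epsilon$ is the $\epsilon$-increment of a multiple of $e^H_{t,t}=l^H_{t,t}\otimes l^H_{t,t}$; and $\langle L_t,\ell_\epsilon\rangle=\tfrac14\big((t+\epsilon)^{2H}-t^{2H}-\epsilon^{2H}\big)$. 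Keeping track of $d(H)$ through these reductions produces precisely $B(H)$, $C(H)$ and $\tfrac H2\kappa_3(X^H_1)$.

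\emph{Step 3 (from $(S)^*$ to $(L^2)$, then $\epsilon\to0^+$).} Using the stochastic gradients $\nabla^{(2)},\nabla$ and Proposition~\ref{ConAdOp}, together with the moment bounds on $F^{(k)}(X^H_t)$ that follow from the polynomial growth of $F$ and the hypercontractivity of the second chaos, each of the five Hida distributions is shown to be square-integrable and is rewritten in closed form: (i) $=(\nabla^{(2)})^*\big(F(X^H_t)\,\ell_\epsilon\big)$, (ii) $=4\,\nabla^*\big(s\mapsto F'(X^H_t)\,I_1((L_t\otimes_1\ell_\epsilon)(\cdot,s))\big)$, and (iii)--(v) are plainly in $(L^2)$. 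One then divides by $\epsilon$, integrates over $(a,b)$ and lets $\epsilon\to0^+$; since $(\nabla^{(2)})^*$ and $\nabla^*$ are linear and continuous on the appropriate Sobolev--Watanabe--Kree spaces, it is enough to pass to the limit inside, where the sole $\epsilon$-dependence is through $\epsilon^{-1}\ell_\epsilon\to\dot L_t$ with $\dot L_t(x_1,x_2)=\tfrac{d(H)}{\Gamma(H/2)^2}(t-x_1)_+^{H/2-1}(t-x_2)_+^{H/2-1}$. A termwise dominated-convergence argument for the Bochner integrals $\epsilon^{-1}\int_a^b(\cdots)\,dt$ then yields $(L^2)$-convergence of each term to the corresponding term of the statement; the truncation at $b$ in $X^H_{(t+\epsilon)\wedge b}$ only perturbs a boundary layer whose $(L^2)$-norm vanishes. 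Summing the five limits shows that the Riemann quotient converges in $(L^2)$, hence in probability, and therefore its probabilistic limit---which is by definition $\int_a^bF(X^H_t)\,d^-X^H_t$---is the displayed sum.

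\emph{Where the difficulty lies.} The technical heart is Step~3 for the two divergence-type components (i) and (ii): because $(t-\cdot)_+^{H/2-1}$ and $l^H_{t,t}$ are only barely square-integrable, neither $(\nabla^{(2)})^*$ nor $\nabla^*$ of the random fields in question can be controlled by a naive $L^2(\Omega;H^{\odot k})$ estimate, so one must work inside the Sobolev--Watanabe--Kree scale, bound the images uniformly in $\epsilon$, and identify the limits there---which is exactly what the operators $\nabla^{(2)},\nabla$ and the earlier structural propositions are for. By contrast, verifying that the product formula closes after five terms (no $F^{(k)}$ with $k\ge3$ ever appears, since the Skorokhod-integral term (ii) is retained without further reduction) and extracting $B(H)$, $C(H)$ and $\tfrac H2\kappa_3(X^H_1)$ from the Beta-function identity is routine, if somewhat lengthy, bookkeeping.
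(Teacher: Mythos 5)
Your decomposition is, term for term, the one the paper uses: your five components are exactly the three-term $(S)^*$-decomposition of Proposition \ref{DecRos1} (Wick term, first-order divergence term, second-order trace term) after the trace term is expanded by the chain rule as in Lemma \ref{tech2}, and your Step 2 reductions reproduce $B(H)$, $C(H)$ and $\frac{H}{2}\kappa_3(X^H_1)$. So the route is the same as the paper's; my reservations concern Step 3, which is where essentially all of the paper's work lies and where one of your sentences, taken literally, is a step that fails. The claim that ``it is enough to pass to the limit inside, where the sole $\epsilon$-dependence is through $\epsilon^{-1}\ell_\epsilon\to\dot L_t$'' is not usable: for fixed $t$ the quotient $\epsilon^{-1}\ell_\epsilon$ does not converge in $\hat{L}^2(\mathbb{R}^2)$ (its norm is of order $\epsilon^{H-1}$) and $\dot L_t\notin \hat{L}^2(\mathbb{R}^2)$, so the continuity of $(\nabla^{(2)})^*$ and $\nabla^*$ on the Sobolev--Watanabe--Kree scale cannot be applied at fixed $t$. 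The $(L^2)$-limits must be taken for the time-integrated random kernels, e.g. $\int_a^b F(X^H_t)\,\epsilon^{-1}\ell_\epsilon\,dt$ in $\hat{L}^2(\mathbb{R}^2)\otimes(\mathcal{W}^{2,2})$ and the analogous field in $L^2(\mathbb{R})\otimes(\mathcal{W}^{1,2})$, and establishing that requires the cancellation in the rectangular increment $|s-r|^{2H-2}-|t_1-r|^{2H-2}-|t_2-s|^{2H-2}+|t_1-t_2|^{2H-2}$, the resulting bound by $C|t_1-t_2|^{2H-2}$ (resp. $|t_1-t_2|^{H-1}$ for the first-order term), Meyer-inequality and hypercontractivity bounds on $\langle F(X^H_{t_1});F(X^H_{t_2})\rangle_{(\mathcal{W}^{2,2})}$, and dominated convergence; this is the content of Propositions \ref{DivConvRos} and \ref{TraceConvRos} (together with the local square-integrability check of Proposition \ref{RepRos1}), and your sketch only asserts it under the words ``dominated convergence''. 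You do correctly identify this as the hard part, but as a proof it is still missing.

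A second, smaller caution concerns the bookkeeping for your component (iv). The scalar there is $\langle L_t\otimes_1 L_t,\ell_\epsilon\rangle$, an increment in the \emph{last} argument only; your display, which identifies it with the increment of $t\mapsto\frac18\kappa_3(X^H_t)=\frac18 t^{3H}\kappa_3(X^H_1)$, would by symmetry pick up a factor $3$ and lead to $\frac{3H}{2}\kappa_3(X^H_1)\int_a^b t^{3H-1}F^{(2)}(X^H_t)\,dt$ instead of the correct $\frac{H}{2}\kappa_3(X^H_1)\int_a^b t^{3H-1}F^{(2)}(X^H_t)\,dt$. Since $\lim_{\epsilon\to0^+}\epsilon^{-1}\langle L_t\otimes_1 L_t,\ell_\epsilon\rangle=\langle L_t\otimes_1 L_t,\dot L_t\rangle$, which is one third of $\frac{d}{dt}\big(\frac18\kappa_3(X^H_t)\big)$, your final constant is the right one, but the justification as written is exactly where the factor of three is lost; the paper avoids this by computing $K^2_t(t,t)$ directly and relating $K^2_1(1,1)$ to $\kappa_3(X^H_1)$ (Remark \ref{ker} and Lemma \ref{Trace2ConvRos1}).
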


\begin{rem}
\begin{itemize}
\item This result should be compared with Theorem $2$ of Section $8$ in \cite{MR2374640} where a similar result is obtained under the assumption of existences of the trace terms. The author use the representation of the Rosenblatt process on a compact interval and the Malliavin calculus with respect to Brownian motion whereas we use its representation as a double Wiener integral on $\mathbb{R}^2$ and stochastic gradient operators on the white noise space.
\item As a straightforward corollary, we have the following new Itô formula:
\begin{align*}
F(X^H_b)-F(X^H_a)&=d(H)\big(\nabla^{(2)}\big)^*\bigg(\int_a^b F'(X^H_t)\dfrac{(t-.)^{\frac{H}{2}-1}_+(t-\#)^{\frac{H}{2}-1}_+}{\Gamma(\frac{H}{2})^2}dt\bigg)\\
&+B(H)\nabla^*\bigg(\int_a^b F''(X^H_t)(t-.)^{\frac{H}{2}-1}_+I_1(l^H_{t,t})dt\bigg)+H\int_{a}^bt^{2H-1}F''(X^H_t)dt\\
&+\frac{H}{2}\kappa_3(X^H_1)\int_a^bt^{3H-1}F^{(3)}(X^H_t)dt+C(H)\int_a^bI_2(e^H_{t,t})F^{(3)}(X^H_t)dt.
\end{align*}
\item The previous Itô formula should be compared with Theorem $3.16$ of \cite{A15} where we obtain an Itô formula in the white noise sense for entire analytic functionals with growth conditions of the Rosenblatt process. The link between theses two formulae can be made by using iterated integration by parts on the white noise space thanks to the pointwise multiplications with $I_1(l^H_{t,t})$ and $I_2(e^H_{t,t})$ for smooth enough functionals.
\end{itemize}
\end{rem}

\subsection*{Organisation}
This paper is organized as follows. In the first section, we introduce the relevant tools from Hida distribution theory and we define the Sobolev-Watanabe-Kree spaces on the white noise space. In the second section, we define the fractional and the Rosenblatt noises, the stochastic integrals with respect to them and the associated stochastic gradient operators. In the third section, we start by analyzing the fractional brownian motion case and we end with the Rosenblatt process case. In particular, for the Rosenblatt process, we separate the studies of the $(L^2)$-convergences for each term appearing in the decomposition of $F(X^{H}_t)(X^{H}_{t+\epsilon}-X^{H}_t)$.

%%%%%%%%% Partie I sur les distributions de Hida et les espaces de Sobolev-Watanabe %%%%%%%%%%%%%%%%%%%%%%%%%%%%%%%%%%%%

\section{Hida distribution and Sobolev-Watanabe-Kree spaces.}
In this section, we briefly remind the white noise analysis introduced by Hida and al. in \cite{HKPS93}. For a good introduction to the theory of white noise, we refer the reader to the book of Kuo \cite{K96}. The underlying probability space $(\Omega, \mathcal{F}, \mathbb{P})$ is the space of tempered distributions endowed with the $\sigma$-field generated by the open sets with respect to the weak* topology in $S'(\mathbb{R})$ and with the infinite dimensional Gaussian measure $\mu$ whose existence is ensured by the Bochner-Minlos theorem.\\
\\
For all $(\phi_1,...,\phi_n)\in S(\mathbb{R})$, the space of $C^{\infty}(\mathbb{R})$ functions with rapid decrease at infinity, the vector $(<.;\phi_1>,...,<.;\phi_n>)$ is a centered Gaussian random vectors with covariance matrix $(<\phi_i;\phi_j>)_{(i,j)}$. As it is written in Kuo \cite{K96}, for any function $f\in L^2(\mathbb{R})$, we can define $<.;f>$ as the random variable in $L^2(\Omega, \mathcal{F}, \mathbb{P})$ obtained by a classical approximation argument and the following isometry:
$$\forall (\phi,\psi)\in S(\mathbb{R})^2\quad\mathbb{E}[<;\psi><;\phi>]=<\psi;\phi>_{L^2(\mathbb{R})}$$
Thus, for any $t\in\mathbb{R}$, we define ($\mu$-almost everywhere):
\[B_t(.)=
\begin{cases}
<.;1_{[0;t]}> & t\geq 0 \\
-<.;1_{[t;0]}> & t<0
\end{cases}
\]
From the isometry property, it follows immediately that $B_t$ is a Brownian motion on the white noise space and, by the Kolmogorov-Centsov theorem, it admits a continuous modification. Moreover, using the approximation of any function $f\in L^2(\mathbb{R})$ by step functions, we obtain:
$$<;f>=\int_{\mathbb{R}}f(s)dB_s$$
We note $\mathcal{G}$, the sigma field generated by Brownian motion and $(L^2)=L^2(\Omega,\mathcal{G},\mathbb{P})$. By the Wiener-Itô theorem, any functionals $\Phi\in (L^2)$ can be expanded uniquely into a series of multiple Wiener-Itô integrals:
$$\Phi=\sum_{n=0}^{\infty}I_n(\phi_n)$$
where $\phi_n\in \hat{L}^2(\mathbb{R}^n)$, the space of square-summable symmetric functions. Using this theorem and the second quantization operator of the harmonic oscillator operator, $A=\frac{-d^2}{dx^2}+x^2+1$, Hida and al. introduced the stochastic space of test functions $(S)$ and its dual, the space of generalized functions $(S)^*$ or Hida distributions. We refer the reader to pages 18-20 of \cite{K96} for an explicit construction. We have the following Gel'fand triple:
$$(S)\subset (L^2)\subset (S)^*$$
We denote by $\left\langle \left\langle ;\right\rangle\right\rangle$ the duality bracket between elements of $(S)$ and $(S)^*$ which reduces to the classical inner product on $(L^2)$ for two elements in $(L^2)$.\\
In the context of white noise analysis, the main tool is the $S$-transform. It is a functional on $S(\mathbb{R})$ which characterizes completely the elements in $(S)^*$ (as well as the strong convergence in $(S)^*$). 

\begin{defi} 
Let $\Phi\in (S)^*$. For every function $\xi\in S(\mathbb{R})$, we define the $S$-transform of $\Phi$ by:
$$S(\Phi)(\xi)=\left\langle \left\langle \Phi; :\exp(<;\xi>):\right\rangle\right\rangle$$
where $:\exp(<;\xi>):=\exp(<;\xi>-\frac{||\xi||_{L^2(\mathbb{R})}^2}{2})=\sum_{n=0}^{\infty}\frac{I_n(\xi^{\otimes n})}{n!}\in (S)$.
\end{defi}

\begin{rem} 
For every $\Phi\in (L^2)$, we have:
$$S(\Phi)(\xi)=\mathbb{E}[\Phi :\exp(<;\xi>):]=\mathbb{E}^{\mu_{\xi}}[\Phi]$$
where $\mu_{\xi}$ is the translated infinite dimensional measure defined by:
$$\mu_{\xi}(dx)=\exp(<x;\xi>-\frac{||\xi||_{L^2(\mathbb{R})}^2}{2})\mu(dx).$$
\end{rem}
\noindent
Regarding the $S$-transform, we have the following properties and results:

\begin{theo}\label{STrans}
\begin{enumerate}
\item The $S$-transform is injective. If $\forall \xi\in S(\mathbb{R}), S(\Phi)(\xi)=S(\Psi)(\xi)$ then $\Phi=\Psi$ in $(S)^*$.
\item Let $\Psi\in (S)^*$ such that $\Psi=\sum_{n=0}^{\infty}I_n(\psi_n)$ with $\psi_n\in \hat{S}'(\mathbb{R}^n)$:
\begin{align*}
\forall\xi\in S(\mathbb{R}),\ S(\Psi)(\xi)=\sum_{n=0}^{\infty}\langle \psi_n;\xi^{\otimes n}\rangle.
\end{align*}
\item For $\Phi , \Psi\in (S)^*$ there is a unique element $\Phi\diamond\Psi\in (S)^*$ such that for all $\xi\in S(\mathbb{R}), S(\Psi)(\xi)S(\Phi)(\xi)=S(\Phi\diamond\Psi)(\xi)$. It is called the Wick product of $\Phi$ and $\Psi$.
\item Let $\Phi_n\in (S)^*$ and $F_n=S(\Phi_n)$. Then $\Phi_n$ converges strongly in $(S)^*$ if and only if the following conditions are satisfied:
\begin{itemize}
\item $\underset{n\rightarrow \infty}{\lim}F_n(\xi)$ exists for each $\xi\in S(\mathbb{R})$.
\item There exists strictly positive constants K, a and p independent of n such that:
$$\forall n\in\mathbb{N}, \forall\xi\in S(\mathbb{R})\quad|F_n(\xi)|\leq K\exp(a||A^p\xi||_{L^2(\mathbb{R})}^2)$$
\end{itemize}
\end{enumerate}
\end{theo}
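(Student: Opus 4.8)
The four assertions are the standard structural properties of the $S$-transform, and the plan is to deduce them all from the density of the exponential vectors in $(S)$ together with the Potthoff--Streit characterization theorem for Hida distributions (see \cite{K96}), recalling the relevant ingredients rather than reproving that theorem in full. Throughout I would work with the chain of Hilbert spaces $(S)_p$ obtained by second quantization from the powers $A^p$ of the harmonic oscillator, so that $(S)=\bigcap_p (S)_p$ and $(S)^*=\bigcup_p (S)_{-p}$, the duality bracket $\langle\langle\,;\,\rangle\rangle$ restricting to the inner product of $(L^2)$.

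For assertion (1), I would first prove the key lemma that the linear span of the exponential vectors $:\exp(\langle\,;\xi\rangle):$, $\xi\in S(\mathbb{R})$, is dense in $(S)$. This follows from the expansion $:\exp(\langle\,;\xi\rangle):=\sum_{n\geq 0}I_n(\xi^{\otimes n})/n!$: differentiating $j$ times along a ray $t\mapsto t\xi$ and evaluating at $t=0$ recovers $I_j(\xi^{\otimes j})\in(S)$, polarization then produces every $I_j(\varphi)$ with $\varphi$ a symmetric algebraic tensor of Schwartz functions, and the nuclearity of $S(\mathbb{R})$ makes such elements dense in each $(S)_p$. Since $S(\Phi)(\xi)=\langle\langle\Phi;:\exp(\langle\,;\xi\rangle):\rangle\rangle$, any $\Phi\in(S)^*$ with vanishing $S$-transform annihilates a dense subspace of $(S)$ and hence vanishes; injectivity follows, and the uniqueness assertion in (3) is then immediate.

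Assertion (2) would be obtained by pairing the chaos expansion $\Psi=\sum_n I_n(\psi_n)$ term by term against $\sum_m I_m(\xi^{\otimes m})/m!$, using orthogonality of multiple integrals of distinct orders and the identity $\langle\langle I_n(\psi_n);I_n(\xi^{\otimes n})/n!\rangle\rangle=\langle\psi_n;\xi^{\otimes n}\rangle$; the interchange of the bracket with the infinite sum is legitimate since $\Psi\in(S)_{-p}$ for some $p$ while $:\exp(\langle\,;\xi\rangle):\in(S)_q$ for every $q$. For the existence part of assertion (3), given $\Phi=\sum_n I_n(\phi_n)$ and $\Psi=\sum_m I_m(\psi_m)$ I would set $\Phi\diamond\Psi=\sum_k I_k\big(\sum_{n+m=k}\phi_n\widehat{\otimes}\psi_m\big)$ and verify via (2) that its $S$-transform equals $S(\Phi)(\xi)S(\Psi)(\xi)$; more cleanly, I would note that $\xi\mapsto S(\Phi)(\xi)S(\Psi)(\xi)$ is entire along every ray and dominated by $K'\exp(a'\|A^{p'}\xi\|_{L^2(\mathbb{R})}^2)$, being a product of two functions of that type, and invoke the characterization theorem directly.

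Assertion (4) is the sequential form of the Potthoff--Streit theorem and is where the real work lies. The easy implication: if $\Phi_n\to\Phi$ strongly in $(S)^*$ then $S(\Phi_n)(\xi)\to S(\Phi)(\xi)$ for each $\xi$ by continuity of the pairing, and since a strongly convergent sequence lies in a fixed ball of some $(S)_{-p}$ while $\|:\exp(\langle\,;\xi\rangle):\|_{(S)_p}\leq K\exp(a\|A^p\xi\|_{L^2(\mathbb{R})}^2)$, the uniform bound follows. For the converse, each $F_n=S(\Phi_n)$ is entire along every ray $z\mapsto F_n(\xi+z\eta)$ and, by the hypothesis, these are locally uniformly bounded, so a normal-families (Vitali) argument shows the limit $F$ is again ray-entire; its normalized ray-Taylor coefficients define kernels $\phi_n\in\hat{S}'(\mathbb{R}^n)$, and the crucial step is the classical estimate converting the bound $K\exp(a\|A^p\xi\|_{L^2(\mathbb{R})}^2)$ into the summability $\sum_n n!\,|\phi_n|_{-q}^2<\infty$ for $q>p$ large enough (the norms being those of the weighted $n$-particle spaces), which produces $\Phi=\sum_n I_n(\phi_n)\in(S)_{-q}$ with $S(\Phi)=F$; applying the same estimate uniformly in $n$ to $F_n-F$ then yields $\Phi_n\to\Phi$ in $(S)_{-q}$. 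I expect this last combinatorial norm estimate --- relating exponential growth of the $S$-transform to $\ell^2$-summability of the chaos kernels in a negative-index space, with constants uniform in $n$ --- to be the main obstacle; everything else reduces to the density of the exponential vectors and the orthogonality of the multiple Wiener--It\^o integrals.
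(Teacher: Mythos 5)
The paper offers no proof of this theorem: it is recalled as standard background from white noise theory, with the surrounding text pointing the reader to \cite{K96} and \cite{HKPS93}. Your outline correctly reproduces the standard arguments from those references --- density of the exponential vectors for injectivity, termwise pairing against $\sum_m I_m(\xi^{\otimes m})/m!$ for the chaos formula, and the Potthoff--Streit characterization theorem for the Wick product and the convergence criterion --- and you rightly flag that the one substantive step you defer, converting the bound $|F_n(\xi)|\leq K\exp(a\|A^p\xi\|_{L^2(\mathbb{R})}^2)$ into the uniform summability $\sum_n n!\,|\phi_n|_{-q}^2<\infty$, is exactly the content of that characterization theorem, so the sketch is faithful to the cited proofs.
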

\noindent
In the sequel, we introduce the differential calculus and Sobolev-Watanabe-Kree spaces on the white noise probability space. For further details, we refer the reader to chapter $9$ of \cite{K96} and chapter $5$ of \cite{HKPS93}. First, we define the Gâteaux derivative of elements in $(S)$ for direction in $S'(\mathbb{R})$.

\begin{theo}
Let $y\in S'(\mathbb{R})$ and $\Phi\in(S)$. The operator $D_y$ is continuous from $(S)$ into itself and we have:
\begin{align*}
\forall\omega\in S'(\mathbb{R})\quad D_y(\Phi)(\omega)=\sum_{n=1}^{\infty}nI_{n-1}(y\otimes_1\phi_n)(\omega),
\end{align*}
where we denote by $\otimes_1$ the contraction of order $1$ (see \cite{N06}).
\end{theo}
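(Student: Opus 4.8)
The plan is to first produce the claimed right-hand side as a bona fide element of $(S)$ depending continuously on $\Phi$, and only afterwards to check that it represents the directional (G\^ateaux) derivative. Write $\Phi=\sum_{n\geq 0}I_n(\phi_n)$ with $\phi_n\in\hat S(\mathbb R^n)$, and recall that $(S)=\bigcap_{p\geq 0}(S)_p$, the projective limit of the Hilbert spaces $(S)_p$ in which $\Psi=\sum_n I_n(\psi_n)$ has norm $\|\Psi\|_p^2=\sum_{n\geq 0}n!\,\big|(A^{\otimes n})^p\psi_n\big|_{L^2(\mathbb R^n)}^2$ (the $n$-th homogeneous component of the second quantization $\Gamma(A)^p$). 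Introduce
\begin{align*}
\widetilde D_y\Phi:=\sum_{n=1}^{\infty}n\,I_{n-1}\big(y\otimes_1\phi_n\big),\qquad \big(y\otimes_1\phi_n\big)(x_1,\dots,x_{n-1})=\big\langle y,\phi_n(\cdot,x_1,\dots,x_{n-1})\big\rangle,
\end{align*}
the order-one contraction being symmetric in its $n-1$ arguments because $\phi_n$ is symmetric. The two steps are: (1) prove $\widetilde D_y\Phi\in(S)$ with a continuity estimate; (2) identify $\widetilde D_y\Phi$ with the G\^ateaux derivative of the continuous version of $\Phi$ in the direction $y$.

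\textbf{Step 1 (continuity).} Since $y\in S'(\mathbb R)=\bigcup_{q\geq 0}S'_{-q}$, fix $q\geq 0$ with $|A^{-q}y|_{L^2(\mathbb R)}<\infty$. As $A$ is self-adjoint and acts coordinatewise, writing $\langle y,g\rangle=\langle A^{-q}y,A^{q}g\rangle_{L^2(\mathbb R)}$ on the contracted variable gives, for any $p\geq 0$ and a.e. $(x_1,\dots,x_{n-1})$,
\begin{align*}
\big((A^{\otimes(n-1)})^p(y\otimes_1\phi_n)\big)(x_1,\dots,x_{n-1})=\Big\langle A^{-q}y\,,\,\big(\big(A^q\otimes(A^{\otimes(n-1)})^p\big)\phi_n\big)(\cdot,x_1,\dots,x_{n-1})\Big\rangle,
\end{align*}
so Cauchy--Schwarz in the contracted variable followed by integration yields
\begin{align*}
\big|(A^{\otimes(n-1)})^p(y\otimes_1\phi_n)\big|_{L^2(\mathbb R^{n-1})}\leq |A^{-q}y|_{L^2(\mathbb R)}\;\big|\big(A^q\otimes(A^{\otimes(n-1)})^p\big)\phi_n\big|_{L^2(\mathbb R^n)}.
\end{align*}
Because the eigenvalues of $A$ are $2k+2\geq 2$, the commuting positive operators obey $A^q\otimes(A^{\otimes(n-1)})^p\leq (A^{\otimes n})^{p\vee q}$ and $A^{\otimes n}\geq 2^n I$; hence the last factor is at most $|(A^{\otimes n})^{p\vee q}\phi_n|_{L^2}$ (which is finite, so $y\otimes_1\phi_n\in\hat S(\mathbb R^{n-1})$), and $n\,|(A^{\otimes n})^{p\vee q}\phi_n|_{L^2}^2\leq n\,2^{-2n}|(A^{\otimes n})^{(p\vee q)+1}\phi_n|_{L^2}^2\leq|(A^{\otimes n})^{(p\vee q)+1}\phi_n|_{L^2}^2$. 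Using $(n-1)!\,n^2=n!\,n$ and summing,
\begin{align*}
\big\|\widetilde D_y\Phi\big\|_p^2=\sum_{n\geq 1}(n-1)!\,n^2\,\big|(A^{\otimes(n-1)})^p(y\otimes_1\phi_n)\big|_{L^2(\mathbb R^{n-1})}^2\leq |A^{-q}y|_{L^2(\mathbb R)}^2\;\big\|\Phi\big\|_{(p\vee q)+1}^2.
\end{align*}
Applied to tails this shows the series for $\widetilde D_y\Phi$ converges in every $(S)_p$, hence in $(S)$, and that $\widetilde D_y:(S)\to(S)$ is continuous.

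\textbf{Step 2 (identification).} For a Wick exponential $\Phi=\,:\!\exp(\langle\cdot,\xi\rangle)\!:$ with $\xi\in S(\mathbb R)$ one has $\phi_n=\xi^{\otimes n}/n!$ and $y\otimes_1\xi^{\otimes n}=\langle y,\xi\rangle\,\xi^{\otimes(n-1)}$, so $\widetilde D_y\Phi=\langle y,\xi\rangle\sum_{m\geq 0}\frac1{m!}I_m(\xi^{\otimes m})=\langle y,\xi\rangle\,\Phi$; since its continuous version is $\Phi(\omega)=\exp\big(\langle\omega,\xi\rangle-\tfrac12|\xi|_{L^2(\mathbb R)}^2\big)$ and $\langle y,\xi\rangle$ is well defined, a direct computation gives
\begin{align*}
\Phi(\omega+ty)-\Phi(\omega)=\Phi(\omega)\big(e^{t\langle y,\xi\rangle}-1\big)=\int_0^t\langle y,\xi\rangle\,\Phi(\omega+sy)\,ds=\int_0^t\big(\widetilde D_y\Phi\big)(\omega+sy)\,ds.
\end{align*}
Both sides are continuous in $\omega$, and the span of Wick exponentials is dense in $(S)$; moreover, for $p$ large enough $(S)_p$ embeds continuously into the continuous functions on $S'_{-p}$ with bounds uniform on bounded subsets, so if $\Phi_k\to\Phi$ in $(S)$ then $\Phi_k\to\Phi$ and $\widetilde D_y\Phi_k\to\widetilde D_y\Phi$ uniformly on the bounded segment $\{\omega+sy:\ s\in[0,t]\}\subset S'_{-q}$. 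Passing to the limit, the identity $\Phi(\omega+ty)-\Phi(\omega)=\int_0^t(\widetilde D_y\Phi)(\omega+sy)\,ds$ holds for every $\Phi\in(S)$; dividing by $t$, letting $t\to 0$ and using the continuity of $s\mapsto(\widetilde D_y\Phi)(\omega+sy)$, we conclude that $D_y\Phi(\omega)$ exists, equals $\sum_{n\geq 1}nI_{n-1}(y\otimes_1\phi_n)(\omega)$, and that $D_y=\widetilde D_y$ is continuous from $(S)$ into itself.

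\textbf{Expected main obstacle.} The heart of the argument is Step 1: estimating the contraction of the \emph{distribution} $y$ against a Schwartz kernel in every $(S)_p$-norm. This forces one to transfer a power $A^{-q}$ onto $y$, to compare the mixed tensor power $A^q\otimes(A^{\otimes(n-1)})^p$ with a uniform power $(A^{\otimes n})^{p\vee q}$, and — most delicately — to absorb the combinatorial factor $n$ produced by $nI_{n-1}$ at the price of only a single extra unit in the Sobolev index, which succeeds precisely because $\operatorname{spec}(A^{\otimes n})$ is bounded below by $2^n$. Step 2 is then a routine density argument, the only subtlety being the use of the continuous-version embedding $(S)_p\hookrightarrow C(S'_{-p})$ to make sense of, and to pass to the limit in, the pointwise-in-$\omega$ fundamental-theorem-of-calculus identity.
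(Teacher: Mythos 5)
The paper does not actually prove this statement: it is imported verbatim from Kuo's book (the ``proof'' is the citation ``See Theorem 9.1 of \cite{K96}''), so there is no in-paper argument to compare against. Your proof is correct and is essentially the standard one. Step 1 is sound: transferring $A^{-q}$ onto $y$, dominating the mixed power $A^{q}\otimes(A^{\otimes(n-1)})^{p}$ by $(A^{\otimes n})^{p\vee q}$ (legitimate since all factors are commuting positive operators with $A\geq 2$), and absorbing the factor $n$ via $n\,2^{-2n}\leq 1$ yields exactly the estimate $\|\widetilde D_y\Phi\|_p\leq |A^{-q}y|_{L^2}\,\|\Phi\|_{(p\vee q)+1}$ that gives both membership in $(S)$ and continuity. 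Step 2 (verification on Wick exponentials, then density plus the continuous-version embedding of $(S)_p$ into continuous functions on bounded subsets of $S'_{-p}$ to pass to the limit in the fundamental-theorem-of-calculus identity) is the usual identification argument and is carried out correctly; the only ingredients you use without proof --- density of the span of Wick exponentials in $(S)$ and the pointwise growth bound for continuous versions --- are standard facts from the same source the paper cites, so invoking them is appropriate here.
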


\begin{proof}
See Theorem $9.1$ of \cite{K96}. 
\end{proof}
\noindent
The next result states that every test random variable is actually infinitely often differentiable in Gâteaux and in Fréchet senses.

\begin{theo}\label{DevGF}
Let $\Phi\in (S)$. $\Phi$ is infinitely often Gâteaux differentiable in every direction of $S'(\mathbb{R})$ and infinitely often differentiable in Fréchet sense. Moreover, for every $k\in \mathbb{N}^*$ and for every $y_1,...,y_k\in (S'(\mathbb{R}))^k$, we have:
\begin{align*}
D_{y_1}\circ D_{y_2}\circ...\circ D_{y_k}(\Phi)=<y_1\otimes y_2\otimes...\otimes y_k; \nabla^{(k)}(\Phi)>,
\end{align*}
where $\nabla^{(k)}(\Phi)$ is the $k$-th Fréchet derivative of $\Phi$ and the equality stands in $(S)$. In particular, $\nabla^{(k)}(\Phi)\in \hat{S}(\mathbb{R}^k)\otimes (S)$.
\end{theo}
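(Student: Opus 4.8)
The plan is to build the iterated G\^ateaux derivatives by composing the single derivative $D_y$, to read off their chaos expansions, to repackage those expansions into one element of $\hat{S}(\mathbb{R}^k)\otimes(S)$, and finally to upgrade from the G\^ateaux notion to the Fr\'echet one using the continuous monomial version of $\Phi$.

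First, by the previous theorem $D_y$ is continuous from $(S)$ into itself for each $y\in S'(\mathbb{R})$, hence so is any composition $D_{y_1}\circ\cdots\circ D_{y_k}$; this already yields infinite G\^ateaux differentiability in all directions of $S'(\mathbb{R})$. Writing $\Phi=\sum_{n\geq0}I_n(\phi_n)$, iterating $D_yI_n(\phi_n)=nI_{n-1}(y\otimes_1\phi_n)$, and using that each $\phi_n\in\hat{S}(\mathbb{R}^n)$ is symmetric (so the order of the $k$ successive order-one contractions does not matter), one obtains
\begin{align*}
D_{y_1}\circ\cdots\circ D_{y_k}(\Phi)=\sum_{n\geq k}\frac{n!}{(n-k)!}\,I_{n-k}\big((y_1\otimes\cdots\otimes y_k)\otimes_k\phi_n\big),
\end{align*}
where $\otimes_k$ contracts all $k$ variables of $y_1\otimes\cdots\otimes y_k$ against $k$ of the $n$ arguments of $\phi_n$.

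Next I would introduce the candidate $k$-th Fr\'echet derivative through the kernel
\begin{align*}
\nabla^{(k)}(\Phi)(x_1,\ldots,x_k)=\sum_{n\geq k}\frac{n!}{(n-k)!}\,I_{n-k}\big(\phi_n(x_1,\ldots,x_k,\cdot)\big),
\end{align*}
and the core of the argument is to show this series converges in $\hat{S}(\mathbb{R}^k)\otimes(S)$ and that $\Phi\mapsto\nabla^{(k)}(\Phi)$ is continuous. Since the summands lie in pairwise distinct Wiener chaoses, the square of any of the Hilbertian seminorms defining $\hat{S}(\mathbb{R}^k)\otimes(S)$, evaluated on the sum, is the sum over $n$ of the corresponding squares; the order-$p$ seminorm of the $n$-th term equals $\big((n!)^2/(n-k)!\big)^{1/2}|(A^{\otimes n})^p\phi_n|_{L^2(\mathbb{R}^n)}$. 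Because the smallest eigenvalue of $A$ is $2$, raising the second quantization by one power multiplies the $n$-th chaos layer by a factor at least $2^n$, so
\begin{align*}
\frac{(n!)^2}{(n-k)!}\,|(A^{\otimes n})^p\phi_n|^2\leq\Big(\sup_{m\geq k}\frac{m!}{(m-k)!}\,4^{-m}\Big)\,n!\,|(A^{\otimes n})^{p+1}\phi_n|^2,
\end{align*}
with finite supremum since $m!/(m-k)!\leq m^k$. Summing over $n$ bounds the order-$p$ seminorm of $\nabla^{(k)}(\Phi)$ by a constant times the order-$(p+1)$ seminorm of $\Phi$. Thus $\nabla^{(k)}(\Phi)\in\hat{S}(\mathbb{R}^k)\otimes(S)$, its symmetry in $x_1,\ldots,x_k$ being inherited from that of the $\phi_n$, and comparing chaos expansions yields $D_{y_1}\circ\cdots\circ D_{y_k}(\Phi)=\langle y_1\otimes\cdots\otimes y_k;\nabla^{(k)}(\Phi)\rangle$, the bracket being the pairing between $S'(\mathbb{R}^k)$ and $\hat{S}(\mathbb{R}^k)$ applied in the Schwartz variables (legitimate since $y_1\otimes\cdots\otimes y_k\in S'(\mathbb{R}^k)$).

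To pass to genuine Fr\'echet differentiability I would use the continuous version $\hat{\Phi}(\omega)=\sum_n\langle\phi_n;{:}\omega^{\otimes n}{:}\rangle$ of $\Phi$, where ${:}\omega^{\otimes n}{:}$ is the Wick tensor power of $\omega\in S'(\mathbb{R})$: each summand is a continuous polynomial of degree $n$ on $S'(\mathbb{R})$ whose Fr\'echet derivatives are explicit, and the estimate above justifies differentiating the series term by term, which identifies the resulting Fr\'echet derivative with $\nabla^{(k)}(\Phi)$ and shows it coincides with the G\^ateaux derivative. Alternatively one may control the Taylor remainder
\begin{align*}
\hat{\Phi}(\cdot+h)-\hat{\Phi}-\langle h;\nabla^{(1)}(\Phi)\rangle=\int_0^1(1-t)\,\langle h\otimes h;\nabla^{(2)}(\Phi)(\cdot+th)\rangle\,dt
\end{align*}
in $(S)$ by $o(\|h\|_{-p})$ using the boundedness of $\nabla^{(2)}(\Phi)$ and the continuity of translations on $(S)$, and then iterate. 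I expect the main obstacle to be the convergence step just described: controlling the combinatorial factor $n!/(n-k)!$ by a single extra power of the second quantization while keeping precise track of which seminorm of $\hat{S}(\mathbb{R}^k)\otimes(S)$ is at play. Once that is settled, the passage to Fr\'echet differentiability follows from the polynomial structure of the continuous version together with the norm estimates just obtained.
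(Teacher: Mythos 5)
The paper itself does not prove this theorem: its ``proof'' is the citation of Theorems $5.7$ and $5.14$ of \cite{HKPS93}, so there is no in-paper argument to match. What you have written is, in substance, a reconstruction of the standard white-noise proof behind that citation, and its core is correct: the iterated chaos formula $D_{y_1}\circ\cdots\circ D_{y_k}(\Phi)=\sum_{n\geq k}\frac{n!}{(n-k)!}I_{n-k}\big((y_1\otimes\cdots\otimes y_k)\otimes_k\phi_n\big)$, the definition of $\nabla^{(k)}(\Phi)$ through the corresponding kernel series, and the seminorm estimate that trades the combinatorial factor $n!/(n-k)!\leq n^k$ against one extra power of the second quantization of $A$ (using that the least eigenvalue of $A$ is $2$) are exactly the right mechanism; they yield convergence in $\hat{S}(\mathbb{R}^k)\otimes(S)$, continuity of $\Phi\mapsto\nabla^{(k)}(\Phi)$, and the identity in $(S)$ after pairing with $y_1\otimes\cdots\otimes y_k\in S'(\mathbb{R}^k)$.

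The one part that stays at the level of a plan is the upgrade from G\^ateaux to Fr\'echet differentiability, and two points there need to be made precise. First, $S'(\mathbb{R})$ is not normed, so ``Fr\'echet differentiable'' has to be formulated on the negative-order Hilbert spaces whose union is $S'(\mathbb{R})$ (as in \cite{HKPS93}); your remainder bound $o(\|h\|_{-p})$ then requires \emph{local} boundedness of the continuous version of $\langle h\otimes h;\nabla^{(2)}(\Phi)\rangle$ on balls of that Hilbert space, which follows from the exponential growth estimates for continuous versions of test functionals, not from ``boundedness of $\nabla^{(2)}(\Phi)$'' as stated. Second, the continuity of translations $\omega\mapsto\omega+h$ on $(S)$, which you invoke, is itself a theorem of the theory and must be cited or proved; alternatively, term-by-term differentiation of the Wick-monomial expansion works, but then the locally uniform convergence of the differentiated series (supplied by your seminorm estimate combined with those growth bounds) has to be spelled out. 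With these points filled in, your argument is complete and coincides with the proof of the results of \cite{HKPS93} that the paper quotes.
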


\begin{proof}
See Theorems $5.7$ and $5.14$ of \cite{HKPS93}.
\end{proof}
\noindent
We introduce as well the number operator, $N$, on $S$ in order to define Sobolev-Watanabe-Kree spaces.

\begin{defi} 
Let $r\geq 0$ and $\Phi\in (S)$ given by $\Phi=\sum_{n=0}^{\infty}I_n(\phi_n)$. We have:
\begin{align*}
N^{r}\Phi=\sum_{n=0}^{\infty}n^rI_n(\phi_n).
\end{align*}
Moreover, $N^r$ is a linear and continuous operator from $(S)$ into itself.\\
\end{defi}

\begin{proof}
See Theorem $9.23$ in \cite{K96}.
\end{proof}
\noindent
Let $r\geq 0$ and $(\mathcal{P})\subset (S)$ the algebra of polynomial random variables generated by elements of the form $I_1(\xi)$ with $\xi\in S(\mathbb{R})$. We denote by $(\mathcal{W}^{r,2})$ the completion of $(\mathcal{P})$ with respect to the norm:
\begin{align*}
\forall\Phi\in (\mathcal{P}),\ \|\Phi\|_{r,2}=\|(N+E)^{\frac{r}{2}}\Phi\|_{(L^2)},
\end{align*} 
where $E$ is the identity operator.

\begin{theo}\label{ConOp}
Let $k\in \mathbb{N}^*$ and $r\geq k$. $\nabla^{(k)}$ extends to a continous operator from $(\mathcal{W}^{r,2})$ into $\hat{L}^2(\mathbb{R}^k)\otimes (\mathcal{W}^{r-k,2})$.\\
\end{theo}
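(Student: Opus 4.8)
The plan is to verify the norm estimate $\|\nabla^{(k)}\Phi\|_{\hat{L}^2(\mathbb{R}^k)\otimes(\mathcal{W}^{r-k,2})}\leq\|\Phi\|_{r,2}$ on the dense subalgebra $(\mathcal{P})$ of $(\mathcal{W}^{r,2})$ and then to extend $\nabla^{(k)}$ by continuity, using that $\hat{L}^2(\mathbb{R}^k)\otimes(\mathcal{W}^{r-k,2})$ is a Hilbert space. The whole computation reduces to a comparison of chaos-wise coefficients.

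First I would pin down the chaos expansion of $\nabla^{(k)}$ on polynomials. Iterating the G\^ateaux derivative formula $D_y I_n(\phi_n)=nI_{n-1}(y\otimes_1\phi_n)$ and matching the result with the identity of Theorem \ref{DevGF}, one obtains that for $\phi_n\in\hat{S}(\mathbb{R}^n)$ and $n\geq k$, $\nabla^{(k)}\big(I_n(\phi_n)\big)$ is the element of $\hat{S}(\mathbb{R}^k)\otimes(S)$ whose value at $(x_1,\dots,x_k)\in\mathbb{R}^k$ is $\frac{n!}{(n-k)!}\,I_{n-k}\big(\phi_n(x_1,\dots,x_k,\cdot)\big)$; this is symmetric in $x_1,\dots,x_k$ because $\phi_n$ is, and it vanishes when $n<k$. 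By linearity, a polynomial $\Phi=\sum_{n=0}^{N}I_n(\phi_n)\in(\mathcal{P})$ satisfies
$$\nabla^{(k)}\Phi=\bigg((x_1,\dots,x_k)\mapsto\sum_{n=k}^{N}\frac{n!}{(n-k)!}\,I_{n-k}\big(\phi_n(x_1,\dots,x_k,\cdot)\big)\bigg),$$
a finite sum, so that no convergence issue arises and Fubini applies freely below.

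Next I would compute the target norm. Recall that $\|\Psi\|_{r-k,2}^2=\sum_{m\geq 0}(m+1)^{r-k}\,m!\,\|\psi_m\|^2_{L^2(\mathbb{R}^m)}$ for $\Psi=\sum_m I_m(\psi_m)$, and that the norm of an element of $\hat{L}^2(\mathbb{R}^k)\otimes(\mathcal{W}^{r-k,2})$, regarded as a symmetric $(\mathcal{W}^{r-k,2})$-valued $L^2$-function on $\mathbb{R}^k$, is $\int_{\mathbb{R}^k}\|\,\cdot\,(x)\|_{r-k,2}^2\,dx$. Applying this chaos-wise, and using that $\int_{\mathbb{R}^k}\|\phi_n(x_1,\dots,x_k,\cdot)\|^2_{L^2(\mathbb{R}^{n-k})}\,dx_1\cdots dx_k=\|\phi_n\|^2_{L^2(\mathbb{R}^n)}$, one gets
$$\big\|\nabla^{(k)}\Phi\big\|^2_{\hat{L}^2(\mathbb{R}^k)\otimes(\mathcal{W}^{r-k,2})}=\sum_{n=k}^{N}(n-k+1)^{r-k}\,\frac{(n!)^2}{(n-k)!}\,\|\phi_n\|^2_{L^2(\mathbb{R}^n)}.$$
It then suffices to bound each coefficient by the corresponding coefficient $(n+1)^r\,n!$ of $\|\Phi\|_{r,2}^2$: since $\frac{n!}{(n-k)!}=n(n-1)\cdots(n-k+1)\leq(n+1)^k$, and since $r\geq k$ gives $r-k\geq 0$ hence $(n-k+1)^{r-k}\leq(n+1)^{r-k}$, we obtain $(n-k+1)^{r-k}\frac{(n!)^2}{(n-k)!}\leq(n+1)^r\,n!$. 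Therefore $\|\nabla^{(k)}\Phi\|_{\hat{L}^2(\mathbb{R}^k)\otimes(\mathcal{W}^{r-k,2})}\leq\|\Phi\|_{r,2}$ for every $\Phi\in(\mathcal{P})$.

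Finally, since $(\mathcal{W}^{r,2})$ is by definition the completion of $(\mathcal{P})$ for $\|\cdot\|_{r,2}$, and since $\hat{L}^2(\mathbb{R}^k)\otimes(\mathcal{W}^{r-k,2})$ is complete (the norm $\|\cdot\|_{r-k,2}$ is Hilbertian, being induced by $(N+E)^{(r-k)/2}$, and the Hilbert tensor product of Hilbert spaces is a Hilbert space), the bounded linear map $\nabla^{(k)}\colon(\mathcal{P})\to\hat{L}^2(\mathbb{R}^k)\otimes(\mathcal{W}^{r-k,2})$ extends uniquely to a continuous linear operator on $(\mathcal{W}^{r,2})$, with operator norm at most $1$. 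I do not expect a genuine obstacle: the only delicate points are getting the combinatorial constant $\frac{n!}{(n-k)!}$ in the chaos expansion of $\nabla^{(k)}$ exactly right and correctly identifying the Hilbert-space norm on $\hat{L}^2(\mathbb{R}^k)\otimes(\mathcal{W}^{r-k,2})$ so that the chaos-wise/Fubini reduction is legitimate.
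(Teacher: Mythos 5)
Your proof is correct. Note, however, that the paper does not actually prove this theorem: it simply cites Theorem 5.24 and Corollary 5.25 of \cite{HKPS93}, so what you have written is a self-contained substitute for that citation rather than a variant of an argument in the text. Your chaos-wise computation is the standard mechanism behind the cited result, and all the key points check out: $\nabla^{(k)}I_n(\phi_n)(x_1,\dots,x_k)=\frac{n!}{(n-k)!}I_{n-k}(\phi_n(x_1,\dots,x_k,\cdot))$ on $(\mathcal{P})$, orthogonality of distinct chaos levels in $\hat{L}^2(\mathbb{R}^k)\otimes(\mathcal{W}^{r-k,2})$ (since $(N+E)^{(r-k)/2}$ preserves chaoses), the Fubini identity $\int_{\mathbb{R}^k}\|\phi_n(x,\cdot)\|^2_{L^2(\mathbb{R}^{n-k})}dx=\|\phi_n\|^2_{L^2(\mathbb{R}^n)}$, and the coefficient bound $(n-k+1)^{r-k}\frac{n!}{(n-k)!}\leq (n+1)^{r-k}(n+1)^k=(n+1)^r$, which uses $r\geq k$ exactly where it should and even yields operator norm at most $1$, sharper than what mere continuity requires. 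The only point worth making explicit is that the operator you construct by density from $(\mathcal{P})$ really is an extension of the Fr\'echet derivative of Theorem \ref{DevGF}: since $(\mathcal{P})$ is dense in $(S)$ for the $(S)$-topology, which is finer than the $\|\cdot\|_{r,2}$-topology, and $\nabla^{(k)}$ is continuous on $(S)$, the extension agrees with $\nabla^{(k)}$ on all of $(S)$; this is a one-line remark, not a gap.
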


\begin{proof}
See Theorem $5.24$ and Corollary $5.25$ of \cite{HKPS93}.
\end{proof}
\noindent
We introduce a space of test random variables which is useful when considering functionals of fractional Brownian motion as well as functionals of the Rosenblatt process. For this purpose, we define, for any $r\in\mathbb{R}$ and any $p>1$, $(\mathcal{W}^{r,p})$ by the completion of $(\mathcal{P})$ with respect to the norm $\|(N+E)^{r/2}.\|_{(L^p)}$. We denote by $(\mathcal{W}^{\infty,\infty})$ the projective limit of the family $\{(\mathcal{W}^{r,p}),\ p>1,\ r\in\mathbb{R}\}$. Due to Meyer inequality (see chapter $1.5$ of \cite{N06}), for every $k\geq 1$, the operator $\nabla^{(k)}$ is continuous from $(\mathcal{W}^{\infty,\infty})$ into itself, this space is stable under pointwise multiplication and the following version of the product and chain rules hold.

\begin{prop}\label{ProdChain}
Let $\Phi,\Psi\in (\mathcal{W}^{\infty,\infty})$ and let $F$ be an infinitely continuously differentiable function on $\mathbb{R}$ such that $F$ and its derivatives have polynomial growth. Then, $F(\Phi)\in (\mathcal{W}^{\infty,\infty})$ and:
\begin{align*}
\nabla(\Phi\Psi)=\Phi\nabla(\Psi)+\Psi\nabla(\Phi),\\
\nabla(F(\Phi))=\nabla(\Phi)F'(\Phi).
\end{align*}
\end{prop}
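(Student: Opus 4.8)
The plan is to establish Proposition \ref{ProdChain} by the standard density/continuity argument, leveraging the fact that $(\mathcal{W}^{\infty,\infty})$ is, by construction, the closure of the polynomial algebra $(\mathcal{P})$ in a family of norms under which $\nabla^{(k)}$ acts continuously. First I would verify both identities on the dense subalgebra $(\mathcal{P})$. For $\Phi,\Psi\in(\mathcal{P})$, the Leibniz rule $\nabla(\Phi\Psi)=\Phi\nabla(\Psi)+\Psi\nabla(\Phi)$ holds because $\nabla$ (equivalently $D_y$ for every direction $y\in S'(\mathbb{R})$, via Theorem \ref{DevGF}) is a G\^ateaux derivative, hence satisfies the ordinary product rule on smooth cylinder functionals; one checks it on generators $I_1(\xi)$ and extends by bilinearity and the already-known algebra structure of $(\mathcal{P})$. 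Similarly, $\nabla(F(\Phi))=F'(\Phi)\nabla(\Phi)$ is the finite-dimensional chain rule applied to the Gaussian random vector underlying the polynomial $\Phi$, valid for any $C^1$ function $F$; here it suffices to record it for $\Phi\in(\mathcal{P})$ and smooth $F$ with polynomially bounded derivatives.

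Next I would promote these identities from $(\mathcal{P})$ to all of $(\mathcal{W}^{\infty,\infty})$ by approximation. Given $\Phi,\Psi\in(\mathcal{W}^{\infty,\infty})$, pick sequences $\Phi_n,\Psi_n\in(\mathcal{P})$ converging to $\Phi,\Psi$ in every $(\mathcal{W}^{r,p})$-norm. The stability of $(\mathcal{W}^{\infty,\infty})$ under pointwise multiplication — which follows from the Meyer inequalities as stated in the excerpt — gives $\Phi_n\Psi_n\to\Phi\Psi$ in $(\mathcal{W}^{\infty,\infty})$, and the continuity of $\nabla$ on $(\mathcal{W}^{\infty,\infty})$ (also cited from Meyer's inequalities) lets one pass to the limit on both sides of the Leibniz identity, using H\"older's inequality to control the products $\Phi_n\nabla(\Psi_n)$ and $\Psi_n\nabla(\Phi_n)$ in the relevant norms. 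This yields the product rule in $(\mathcal{W}^{\infty,\infty})$.

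For the chain rule and the membership $F(\Phi)\in(\mathcal{W}^{\infty,\infty})$, the argument is analogous but requires slightly more care. One first shows $F(\Phi)\in(\mathcal{W}^{\infty,\infty})$: iterating the formal chain rule, $\nabla^{(k)}(F(\Phi))$ is a universal polynomial in $F^{(j)}(\Phi)$ ($1\le j\le k$) and the tensors $\nabla^{(i)}(\Phi)$ ($1\le i\le k$); since $F$ and its derivatives have polynomial growth and $\Phi\in(\mathcal{W}^{\infty,\infty})$ has all moments finite together with all its gradients, repeated use of H\"older's inequality shows every such expression lies in every $(L^p)$, so $F(\Phi)$ and all its gradients are in every $(\mathcal{W}^{r,p})$. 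Then, approximating $\Phi$ by $\Phi_n\in(\mathcal{P})$ and $F$ by a suitably truncated smooth sequence if needed, one passes to the limit in $\nabla(F(\Phi_n))=F'(\Phi_n)\nabla(\Phi_n)$, again invoking continuity of $\nabla$ on $(\mathcal{W}^{\infty,\infty})$, pointwise-multiplication stability, and the polynomial growth of $F'$ to handle $F'(\Phi_n)\to F'(\Phi)$ in the appropriate norms.

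The main obstacle is purely technical rather than conceptual: it is the bookkeeping needed to show that all the norms stay finite and that convergence is preserved under the nonlinear operations. Concretely, the delicate point is establishing $F(\Phi)\in(\mathcal{W}^{\infty,\infty})$ and the $(\mathcal{W}^{r,p})$-convergence $F(\Phi_n)\to F(\Phi)$ using only polynomial growth of $F$ and its derivatives — this is where one leans hardest on the hypercontractivity/Meyer machinery (hypercontractivity of the Ornstein--Uhlenbeck semigroup gives equivalence of $(L^p)$ norms across chaoses, so finite moments of all orders for $\Phi$ and its gradients translate into the required bounds). Everything else — the two identities on $(\mathcal{P})$ and the limit passages — is routine once that integrability is in hand, and indeed the statement is essentially a repackaging of the classical Malliavin calculus product and chain rules (see \cite{N06}) transported to the white noise setting.
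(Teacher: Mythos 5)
Your proposal is correct and follows essentially the same route as the paper: the paper's proof is a one-line reduction to the product and chain rules on the polynomial algebra $(\mathcal{P})$ combined with the density/continuity machinery of Meyer's inequalities and the H\"older-type estimate for products in Sobolev--Watanabe spaces (Remark $1$ page $78$, Propositions $1.5.1$ and $1.5.6$ of \cite{N06}), which is precisely the argument you spell out, including the polynomial-growth bookkeeping needed for $F(\Phi)\in(\mathcal{W}^{\infty,\infty})$ and the passage to the limit.
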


\begin{proof}
This proposition is a consequence of the product and chain rules on $(\mathcal{P})$ for $\nabla$ as well as Remark $1$ page $78$, Proposition $1.5.1$ and Proposition $1.5.6$ of \cite{N06}.
\end{proof}
\noindent
We end this section by the definition of the adjoint of the Gâteaux derivative $D_y$ for every $y\in S'(\mathbb{R})$ and by continuity results regarding the adjoint operators $\nabla^*$ and $\big(\nabla^{(2)}\big)^*$.

\begin{theo} 
Let $y\in S'(\mathbb{R})$ and $\Psi\in(S)^*$. The adjoint operator $D^*_y$ is continuous from $(S)^*$ into itself and we have:
\begin{align*}
\forall\xi\in S(\mathbb{R})\quad S(D^*_y(\Psi))(\xi)=<y;\xi>S(\Psi)(\xi)=S(I_1(y)\diamond\Psi)(\xi)
\end{align*}
where $I_1(y)$ is a generalized Wiener-Itô integral in $(S)^*$. Moreover, we have the following generalized Wiener-Itô decomposition for $D^*_y(\Psi)$:
\begin{align*}
D^*_y(\Psi)(.)=\sum_{n=0}^{\infty}I_{n+1}(y\hat{\otimes}\psi_n).
\end{align*}
\end{theo}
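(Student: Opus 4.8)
The plan is to reduce both displayed identities to the injectivity of the $S$-transform (Theorem \ref{STrans}(1)); the continuity statement needs no separate argument, since $D_y$ is a continuous operator from the nuclear space $(S)$ into itself, so that its transpose $D^*_y$ is automatically continuous from $(S)^*$ into itself (for the strong, equivalently the weak-$*$, topology). The crux is the observation that each Wick exponential $:\exp(<;\xi>):$ is an eigenvector of $D_y$. Starting from $:\exp(<;\xi>):=\sum_{n\geq 0}I_n(\xi^{\otimes n})/n!$, the explicit action $D_y(I_n(\phi_n))=nI_{n-1}(y\otimes_1\phi_n)$, and the elementary identity $y\otimes_1\xi^{\otimes n}=<y;\xi>\,\xi^{\otimes(n-1)}$, I would obtain
\begin{align*}
D_y\big(:\exp(<;\xi>):\big)=\sum_{n\geq 1}\frac{<y;\xi>}{(n-1)!}I_{n-1}(\xi^{\otimes(n-1)})=<y;\xi>\,:\exp(<;\xi>):.
\end{align*}

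Plugging this into the definitions of the $S$-transform and of $D^*_y$ gives, for every $\Psi\in(S)^*$ and every $\xi\in S(\mathbb{R})$,
\begin{align*}
S\big(D^*_y(\Psi)\big)(\xi)=\big\langle\big\langle\Psi;D_y(:\exp(<;\xi>):)\big\rangle\big\rangle=<y;\xi>\,S(\Psi)(\xi),
\end{align*}
which is the first identity. Since $I_1(y)$ is a generalized Wiener-It\^{o} integral with kernel $y\in S'(\mathbb{R})$, hence an element of $(S)^*$, and has $S$-transform $S(I_1(y))(\xi)=<y;\xi>$ by Theorem \ref{STrans}(2), the definition of the Wick product (Theorem \ref{STrans}(3)) gives $S(I_1(y)\diamond\Psi)(\xi)=<y;\xi>S(\Psi)(\xi)=S(D^*_y(\Psi))(\xi)$, and injectivity of the $S$-transform yields $D^*_y(\Psi)=I_1(y)\diamond\Psi$.

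For the generalized Wiener-It\^{o} decomposition, I would write $\Psi=\sum_{n\geq 0}I_n(\psi_n)$ with $\psi_n\in\hat{S}'(\mathbb{R}^n)$ and set $\Theta_N=\sum_{n=0}^N I_{n+1}(y\hat{\otimes}\psi_n)$. Pairing a symmetrised kernel against the symmetric tensor $\xi^{\otimes(n+1)}$ is the same as pairing the unsymmetrised one, so $\langle y\hat{\otimes}\psi_n;\xi^{\otimes(n+1)}\rangle=<y;\xi>\langle\psi_n;\xi^{\otimes n}\rangle$; hence by Theorem \ref{STrans}(2) the $S$-transform of $\Theta_N$ equals $<y;\xi>$ times that of the $N$-th partial sum of the expansion of $\Psi$. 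These converge pointwise to $<y;\xi>S(\Psi)(\xi)$ and satisfy a uniform bound of the exponential type demanded by Theorem \ref{STrans}(4), because $|<y;\xi>|\leq C\|A^p\xi\|_{L^2(\mathbb{R})}$ for a suitable $p$; hence $\Theta_N$ converges strongly in $(S)^*$, and a final appeal to injectivity identifies its limit with $D^*_y(\Psi)$.

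I do not anticipate a real obstacle here: the eigenvector identity for the Wick exponential is essentially the whole proof and is a one-line computation. What needs attention is only bookkeeping --- making sure that $I_1(y)$, the Wick product $I_1(y)\diamond\Psi$, and the series $\sum_n I_{n+1}(y\hat{\otimes}\psi_n)$ all genuinely define elements of $(S)^*$ --- and this is dispatched uniformly by the growth criterion of Theorem \ref{STrans}(4); nothing quantitative beyond that is required.
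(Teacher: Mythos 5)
Your argument is correct, but note that the paper does not actually prove this statement: its ``proof'' is a pointer to Theorems $9.12$, $9.13$ and the remark following Corollary $9.14$ of Kuo's book, so you are in effect reconstructing the textbook argument rather than paralleling anything in the paper itself. Your route --- the eigenvector identity $D_y\big(:\exp(<;\xi>):\big)=<y;\xi>\,:\exp(<;\xi>):$, transposition to get $S(D^*_y\Psi)(\xi)=<y;\xi>S(\Psi)(\xi)$, then injectivity of the $S$-transform to identify $D^*_y\Psi=I_1(y)\diamond\Psi$, and finally the convergence criterion of Theorem \ref{STrans}(4) for the chaos decomposition --- is exactly the standard white-noise proof, and every step checks out: the contraction identity $y\otimes_1\xi^{\otimes n}=<y;\xi>\xi^{\otimes(n-1)}$, the pairing $\langle y\hat{\otimes}\psi_n;\xi^{\otimes(n+1)}\rangle=<y;\xi>\langle\psi_n;\xi^{\otimes n}\rangle$, and the continuity of $D^*_y$ by duality from the continuity of $D_y$ on $(S)$ are all sound. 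The one place where your justification is thinner than it should be is the uniform bound for $S(\Theta_N)$: the estimate $|<y;\xi>|\leq C\|A^p\xi\|_{L^2(\mathbb{R})}$ only controls the new factor, and you must also bound the partial sums $\sum_{n=0}^N\langle\psi_n;\xi^{\otimes n}\rangle$ uniformly in $N$ by $K\exp(a\|A^q\xi\|^2_{L^2(\mathbb{R})})$; this is true because $\Psi\in(S)^*$ has finite norm in some negative-order space, so that $\sum_n|\langle\psi_n;\xi^{\otimes n}\rangle|$ is dominated, via Cauchy--Schwarz, by such an exponential --- but it deserves a sentence, since it uses $\Psi\in(S)^*$ and not merely the pointwise convergence of $S(\Psi)$. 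With that addition (and the trivial remark that each finite sum $\Theta_N$ lies in $(S)^*$ because $y\hat{\otimes}\psi_n\in\hat{S}'(\mathbb{R}^{n+1})$), your proof is complete and self-contained, which is arguably more informative than the paper's citation.
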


\begin{proof}
See Theorem $9.12$, $9.13$ and the remark following corollary 9.14 in \cite{K96}.
\end{proof}

\begin{prop}\label{ConAdOp}
Let $r\geq 1$ and $k\geq 2$. Then, $\nabla^*$ is a continuous and linear operator from $L^2\big(\mathbb{R}\big)\otimes\big(\mathcal{W}^{r,2}\big)$ into $\big(\mathcal{W}^{r-1,2}\big)$ and $\big(\nabla^{(2)}\big)^*$ is a continuous and linear operator from $\hat{L}^2\big(\mathbb{R}^2\big)\otimes\big(\mathcal{W}^{k,2}\big)$ into $\big(\mathcal{W}^{k-2,2}\big)$.
\end{prop}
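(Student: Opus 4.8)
The plan is to work directly with Wiener--It\^o chaos expansions, which reduces the statement to an elementary comparison of scalar weights.

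First I would pin down the chaos representation of the two adjoint operators on a dense domain. Take $G$ a finite sum $\sum_i f_i\otimes\Phi_i$ with $f_i\in L^2(\mathbb{R})$ and $\Phi_i\in(\mathcal{P})$, and write its expansion as $G=\sum_{n\ge 0}I_n(g_n)$, where I put the $L^2(\mathbb{R})$ variable as the first argument of $g_n\in L^2(\mathbb{R})\otimes\hat L^2(\mathbb{R}^n)$. Then $\nabla^*$, the adjoint of $\nabla:(\mathcal{P})\to L^2(\mathbb{R})\otimes(\mathcal{P})$ for the $(L^2)$ pairings, acts by $\nabla^*G=\sum_{n\ge 0}I_{n+1}(\widetilde{g_n})$, where $\widetilde{g_n}$ denotes the symmetrization of $g_n$ over all $n+1$ variables. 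This is the vector form of the identity $D^*_y(I_n(\psi_n))=I_{n+1}(y\,\hat\otimes\,\psi_n)$ recalled above, and it follows from a one-line computation using $\langle\langle\nabla\Phi;G\rangle\rangle=\langle\langle\Phi;\nabla^*G\rangle\rangle$, the expansion $\nabla I_n(\phi_n)=nI_{n-1}(\phi_n(s,\cdot))$ (the chaos form of $D_y$), the symmetry of $\phi_n$, and orthogonality of multiple integrals of distinct orders. In exactly the same way, for $G=\sum_n I_n(g_n)$ with $g_n\in\hat L^2(\mathbb{R}^2)\otimes\hat L^2(\mathbb{R}^n)$ one obtains $(\nabla^{(2)})^*G=\sum_{n\ge 0}I_{n+2}(\widetilde{g_n})$, with $\widetilde{g_n}$ the symmetrization over all $n+2$ variables.

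Next I would establish the norm bounds on this dense domain, where every chaos sum is finite so no convergence issue arises. Recalling that $\|\Psi\|_{s,2}^2=\sum_n (n+1)^s\,n!\,\|\psi_n\|_{L^2(\mathbb{R}^n)}^2$ and that symmetrization is a contraction in $L^2$, I get
\begin{align*}
\|\nabla^*G\|_{r-1,2}^2=\sum_{n\ge 0}(n+2)^{r-1}(n+1)!\,\|\widetilde{g_n}\|_{L^2(\mathbb{R}^{n+1})}^2\le\sum_{n\ge 0}(n+2)^{r-1}(n+1)\,n!\,\|g_n\|_{L^2(\mathbb{R}^{n+1})}^2.
\end{align*}
Since $r\ge 1$, one has $(n+2)^{r-1}\le 2^{r-1}(n+1)^{r-1}$, hence $(n+2)^{r-1}(n+1)\le 2^{r-1}(n+1)^r$, so the right-hand side is at most $2^{r-1}\|G\|_{L^2(\mathbb{R})\otimes(\mathcal{W}^{r,2})}^2$. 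The second claim is treated identically: using $k\ge 2$ one has $(n+3)^{k-2}\le 3^{k-2}(n+1)^{k-2}$ and $n+2\le 2(n+1)$, whence $(n+3)^{k-2}(n+2)(n+1)\le 2\cdot 3^{k-2}(n+1)^k$, giving $\|(\nabla^{(2)})^*G\|_{k-2,2}^2\le 2\cdot 3^{k-2}\|G\|_{\hat L^2(\mathbb{R}^2)\otimes(\mathcal{W}^{k,2})}^2$. Linearity is clear on the dense domain, and since $(\mathcal{P})$ is by definition dense in $(\mathcal{W}^{r,2})$ (so finite sums $\sum_i f_i\otimes\Phi_i$ are dense in $L^2(\mathbb{R})\otimes(\mathcal{W}^{r,2})$, and similarly in the $\hat L^2(\mathbb{R}^2)$ case), these bounded operators extend uniquely by continuity to the full completions; note also that $r\ge 1$ and $k\ge 2$ ensure the targets $(\mathcal{W}^{r-1,2})$ and $(\mathcal{W}^{k-2,2})$ are genuine spaces of square-integrable random variables, as claimed.

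There is no serious obstacle here: the only point requiring care is getting the chaos expansion of the adjoints exactly right --- the index shift $n\mapsto n+1$ (resp. $n\mapsto n+2$) together with the symmetrization over \emph{all} the variables --- after which the estimate is just the elementary comparison of the weights $(n+2)^{r-1}(n+1)$ and $(n+1)^r$ (resp. $(n+3)^{k-2}(n+2)(n+1)$ and $(n+1)^k$), which is where the hypotheses $r\ge 1$, $k\ge 2$ are used. One could alternatively try to read the statement off Theorem \ref{ConOp} by passing to Banach-space adjoints along the scale $\{(\mathcal{W}^{s,2})\}_s$, but the indices do not line up as cleanly and the direct computation above is more transparent.
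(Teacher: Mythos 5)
Your proof is correct, but it follows a genuinely different route from the paper. The paper does not re-prove the first assertion at all (it simply invokes Theorem 5.27 of \cite{HKPS93} for $\nabla^*$) and establishes the $\big(\nabla^{(2)}\big)^*$ part by duality: for $X\in\hat L^2(\mathbb{R}^2)\otimes(\mathcal{P})$ it writes $\|(\nabla^{(2)})^*(X)\|_{k-2,2}$ as a supremum over $\Phi$ with $\|\Phi\|_{(L^2)}=1$, moves $\nabla^{(2)}$ onto $\Phi$, and uses the commutation of $\nabla^{(2)}$ with functions of the number operator (turning $(E+N)^{\frac{k-2}{2}}$ into $(3E+N)^{\frac{k}{2}}(E+N)^{-1}$) together with the already-proved continuity of $\nabla^{(2)}$ from Theorem \ref{ConOp}, ending with the bound $3^{k/2}\|(E+N)^{k/2}X\|_{\hat L^2(\mathbb{R}^2)\otimes(L^2)}$. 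You instead compute the chaos expansion of the adjoints directly ($\nabla^*G=\sum_n I_{n+1}(\widetilde{g_n})$, $(\nabla^{(2)})^*G=\sum_n I_{n+2}(\widetilde{g_n})$ on the dense domain), use that symmetrization is an $L^2$ contraction, and compare the weights $(n+2)^{r-1}(n+1)$ versus $(n+1)^r$, resp. $(n+3)^{k-2}(n+2)(n+1)$ versus $(n+1)^k$; this is exactly where $r\ge 1$, $k\ge 2$ enter, and your index shifts, norm identities and constants ($2^{r-1}$ and $2\cdot 3^{k-2}$) all check out. What each approach buys: yours is self-contained and handles both operators uniformly with explicit kernels and constants, without relying on Theorem \ref{ConOp} or on the external reference for $\nabla^*$; the paper's duality-plus-commutation argument avoids any kernel bookkeeping, piggybacks on the continuity of the gradient already established, and extends verbatim to adjoints of $\nabla^{(k)}$ for arbitrary $k$ (the alternative you dismissed as less transparent is, in essence, the paper's proof, and it lines up cleanly once one uses the commutation relation rather than Banach-space adjoints along the scale).
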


\begin{proof}
The first part of the proposition comes from Theorem $5.27$ of \cite{HKPS93}. Let us prove the second part. Let $k\geq 2$. Let $X\in \hat{L}^2\big(\mathbb{R}^2\big)\otimes\big(\mathcal{P}\big)$. We have, by duality:
\begin{align*}
\mid\mid \big(\nabla^{(2)}\big)^*\big(X\big)\mid\mid_{k-2,2}&=\mid\mid \big(E+N\big)^{\frac{k-2}{2}}\big(\nabla^{(2)}\big)^*\big(X\big)\mid\mid_{(L^2)},\\
&=\underset{\mid\mid\Phi\mid\mid_{(L^2)}=1}{\sup}\mid \langle \Phi; \big(E+N\big)^{\frac{k-2}{2}}\big(\nabla^{(2)}\big)^*\big(X\big)\rangle \mid,\\
&=\underset{\mid\mid\Phi\mid\mid_{(L^2)}=1}{\sup}\mid \langle \nabla^{(2)}\big(\big(E+N\big)^{\frac{k-2}{2}}\Phi\big);X \rangle_{\hat{L}^2(\mathbb{R}^2)\otimes(L^2)} \mid,\\
&=\underset{\mid\mid\Phi\mid\mid_{(L^2)}=1}{\sup}\mid \langle \big(3E+N\big)^{\frac{k}{2}}\nabla^{(2)}\big(\big(E+N\big)^{-1}\Phi\big);X \rangle_{\hat{L}^2(\mathbb{R}^2)\otimes(L^2)} \mid,\\
&\leq \underset{\mid\mid\Phi\mid\mid_{(L^2)}=1}{\sup} \mid\mid \nabla^{(2)}\big(\big(E+N\big)^{-1}\Phi\big) \mid\mid_{\hat{L}^2(\mathbb{R}^2)\otimes(L^2)}  \mid\mid \big(3E+N\big)^{\frac{k}{2}}X\mid\mid_{\hat{L}^2(\mathbb{R}^2)\otimes(L^2)},\\
&\leq \mid\mid \big(3E+N\big)^{\frac{k}{2}}X\mid\mid_{\hat{L}^2(\mathbb{R}^2)\otimes(L^2)},
\end{align*}
since, by continuity,
\begin{align*}
 \underset{\mid\mid\Phi\mid\mid_{(L^2)}=1}{\sup} \mid\mid \nabla^{(2)}\big(\big(E+N\big)^{-1}\Phi\big) \mid\mid_{\hat{L}^2(\mathbb{R}^2)\otimes(L^2)}\leq 1.
\end{align*}
Thus, we have:
\begin{align*}
\mid\mid \big(\nabla^{(2)}\big)^*\big(X\big)\mid\mid_{k-2,2}\leq 3^{\frac{k}{2}}\mid\mid \big(E+N\big)^{\frac{k}{2}}X\mid\mid_{\hat{L}^2(\mathbb{R}^2)\otimes(L^2)},
\end{align*}
which concludes the proof.
\end{proof}

%%%%%%%%% Partie II sur l'analyse stochastique par rapport au mouvement brownien fractionnaire et au Rosenblatt %%%%%%%%%%%%%%%%%%%%%%%%%%%%%%%%%%%%

\section{Stochastic analysis of fractional Brownian motion and of the Rosenblatt process.}
In this section, we state the definition of fractional Brownian motion and of the Rosenblatt process. Following \cite{B03} and \cite{A15}, we remind that these processes are $(S)^*$-differentiable and compute their $(S)^*$ derivatives. Then, we define stochastic derivative operators of first and second orders which play a significant role in the trace terms appearing in the relationship between Wick-Itô integral and forward integral with respect to these two processes. Moreover, we compute explicitely the Hilbert space adjoint of the first order stochastic gradient which is linked to Wick-Itô integral with respect to fractional Brownian motion. Therefore, we give a brief introduction to the stochastic integrals with respect to the fractional and the Rosenblatt noises in the Wick-Itô sense and make explicit the aforementioned link with the adjoint operator. In the rest of the article, we fix an interval $(a,b)$ included in $\mathbb{R}_+$.

\begin{defi} For $H>1/2$, we define fractional Brownian motion and the Rosenblatt process by:
\begin{align*}
\forall t\in (a,b),\ B^H_t=A(H)\int_{\mathbb{R}}\Big(\int_0^t\dfrac{(s-x)^{H-\frac{3}{2}}_+}{\Gamma(H-\frac{1}{2})}ds\Big)dB_x,\\
X^{H}_t=d(H)\int_{\mathbb{R}^2}\left(\int_{0}^{t}\dfrac{(s-x_1)^{\frac{H}{2}-1}_{+}}{\Gamma(\frac{H}{2})}\dfrac{(s-x_2)^{\frac{H}{2}-1}_{+}}{\Gamma(\frac{H}{2})}ds\right)dB_{x_1}dB_{x_2},
\end{align*}
where $A(H)$ and $d(H)$ are positive constants such that $\mathbb{E}[|B^H_1|^2]=\mathbb{E}[|X^H_1|^2]=1$ and defined by:
\begin{align*}
A(H)=\Big(\dfrac{\Gamma(H-\frac{1}{2})H(2H-1)\Gamma(\frac{3}{2}-H)}{\Gamma(2-2H)}\Big)^{\frac{1}{2}},\\
d(H)=\sqrt{\dfrac{H(2H-1)}{2}}\dfrac{(\Gamma(\frac{H}{2}))^{2}}{\beta(1-H;\frac{H}{2})}.
\end{align*}
\end{defi}

\begin{defi}
Fractional Brownian motion and the Rosenblatt process are $(S)^*$-differentiable and their derivatives, the fractional noise, $\{\dot{B}^H_t\}$, and the Rosenblatt noise, $\{\dot{X}^H_t\}$, admit the following $(S)$-transforms:
\begin{align*}
\forall\xi\in S(\mathbb{R}),\ S(\dot{B}^H_t)(\xi)=A(H)I^{H-\frac{1}{2}}_{+}(\xi)(t),\\
S(\dot{X}^H_t)(\xi)=d(H)(I^{\frac{H}{2}}_+(\xi)(t))^2.
\end{align*}
where for every $0<\alpha<1$, $I^{\alpha}_+(\xi)(t)=1/(\Gamma(\alpha))\int_{\mathbb{R}}(t-s)^{\alpha-1}_{+}\xi(s)ds$ is the fractional integral of order $\alpha$ of $\xi$ on the real line (\cite{SKM93} chapter $2$).
\end{defi}
\begin{proof}
See the proof of Lemma $2.15$, Theorem $2.17$ and Definition $2.18$ of \cite{B03} for fractional Brownian motion and Lemma $3.4$ of \cite{A15} for the Rosenblatt process.
\end{proof}
\noindent
The next Lemma is a technical one allowing us to define the first order stochastic derivative operators associated with fractional Brownian motion and the Rosenblatt process.

\begin{lem}\label{ConI1}
For every $\alpha\in (0,1/2)$ and every $r\geq 0$, the operator $I^{\alpha}_+\otimes E$ admits a continuous extension from $L^2(\mathbb{R})\otimes (\mathcal{W}^{r,2})$ to $L^2((a,b))\otimes (\mathcal{W}^{r,2})$.
\end{lem}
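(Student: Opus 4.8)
The plan is to show that $I^{\alpha}_+ \otimes E$, initially defined on the dense subspace $L^2(\mathbb{R}) \otimes (\mathcal{P})$ (or on simple tensors $f \otimes \Phi$ with $f \in L^2(\mathbb{R})$, $\Phi \in (\mathcal{P})$), satisfies the bound
\begin{align*}
\| (I^{\alpha}_+ \otimes E)(G) \|_{L^2((a,b)) \otimes (\mathcal{W}^{r,2})} \leq C_{\alpha,a,b}\, \| G \|_{L^2(\mathbb{R}) \otimes (\mathcal{W}^{r,2})},
\end{align*}
which by density then yields the desired continuous extension. Since the operator acts as the identity on the $(\mathcal{W}^{r,2})$-factor and the norm on the tensor product is the Hilbert(-space) tensor norm, it suffices to treat the deterministic factor: one reduces, via a chaos expansion $G = \sum_n I_n(g_n(\cdot\,;x))$ with kernels $g_n \in \hat{L}^2(\mathbb{R}^n) \otimes L^2(\mathbb{R})$ (the last variable $x$ being the one on which $I^\alpha_+$ acts), to proving that $I^\alpha_+$ is bounded from $L^2(\mathbb{R})$ to $L^2((a,b))$, uniformly, and then sum the $n^r$-weighted pieces.

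The core estimate is therefore: for $0 < \alpha < 1/2$, the restricted fractional integral $\xi \mapsto \big(I^{\alpha}_+(\xi)\big)|_{(a,b)}$ maps $L^2(\mathbb{R})$ continuously into $L^2((a,b))$. First I would note that $I^{\alpha}_+(\xi)(t) = \frac{1}{\Gamma(\alpha)}\int_{-\infty}^{t}(t-s)^{\alpha-1}\xi(s)\,ds$, and split the integral at a fixed point, say $s = a - 1$: the "near" part $\int_{a-1}^{t}(t-s)^{\alpha-1}\xi(s)\,ds$ for $t \in (a,b)$ is handled by Young's convolution inequality, since $(t-s)^{\alpha-1}_+ \mathbf{1}_{\{|t-s| \leq b-a+1\}}$ is in $L^1$ (because $\alpha - 1 > -1$), so convolution with it is bounded on $L^2$; the "far" part $\int_{-\infty}^{a-1}(t-s)^{\alpha-1}\xi(s)\,ds$ for $t \in (a,b)$ has a kernel $(t-s)^{\alpha-1}$ that is bounded above by $1$ (distance at least $1$) and, as a function on $(a,b) \times (-\infty, a-1)$, is square-integrable after suitable localization — or more simply one applies Cauchy--Schwarz in $s$ against $(t-s)^{\alpha-1}$ combined with the integrability of $s \mapsto (t-s)^{2\alpha-2}$ near $-\infty$ (valid since $2\alpha - 2 < -1$) to control the far part by $\|\xi\|_{L^2(\mathbb{R})}$ uniformly in $t \in (a,b)$, then integrate over $t \in (a,b)$. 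Both pieces give constants depending only on $\alpha$ and on $b-a$.

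Putting this together: writing $G \in L^2(\mathbb{R}) \otimes (\mathcal{W}^{r,2})$ in chaos form and using that $I^\alpha_+ \otimes E$ acts chaos-wise only on the spatial variable, one gets
\begin{align*}
\| (I^\alpha_+ \otimes E)(G) \|^2_{L^2((a,b)) \otimes (\mathcal{W}^{r,2})} = \sum_{n \geq 0} (n!) (n{+}1)^{\,?}\cdots \leq C_{\alpha,a,b}^2 \sum_{n \geq 0} \cdots = C_{\alpha,a,b}^2\, \| G \|^2_{L^2(\mathbb{R}) \otimes (\mathcal{W}^{r,2})},
\end{align*}
the point being that the $(N+E)^{r/2}$ weighting commutes with $I^\alpha_+ \otimes E$ (the weight touches only the chaos order, the operator only the spatial variable), so the bound from the deterministic estimate passes through verbatim. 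By density of $L^2(\mathbb{R}) \otimes (\mathcal{P})$ in $L^2(\mathbb{R}) \otimes (\mathcal{W}^{r,2})$ the extension exists and is unique. The main obstacle I anticipate is the "far" tail of the fractional integral: unlike the compact-interval fractional integral, $I^\alpha_+$ on the whole line sees contributions from $s \to -\infty$, and one must check carefully that for $\alpha < 1/2$ the decay $(t-s)^{\alpha-1}$ is square-integrable at $-\infty$ (which is exactly where the hypothesis $\alpha < 1/2$ enters and fails at $\alpha = 1/2$), so that the operator lands in $L^2$ rather than merely in some weighted or local space; the restriction to the bounded interval $(a,b)$ on the target side is what makes the constant finite.
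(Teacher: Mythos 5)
Your proposal is correct, and the overall architecture (define the operator on simple tensors over a dense subspace, reduce to a deterministic bound for $I^{\alpha}_+$ from $L^2(\mathbb{R})$ into $L^2((a,b))$, extend by density) matches the paper's; the genuine difference is how that core bound is obtained. The paper cites the fractional integration theorem (Theorem $5.3$ of \cite{SKM93}): $I^{\alpha}_+$ is bounded from $L^2(\mathbb{R})$ into $L^{\frac{2}{1-2\alpha}}(\mathbb{R})$, and since $\frac{2}{1-2\alpha}>2$ and $(a,b)$ is bounded, H\"older's inequality on $(a,b)$ gives $\|I^{\alpha}_+(\phi)\|_{L^2((a,b))}\leq C_{a,b,\alpha}\|\phi\|_{L^2(\mathbb{R})}$. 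You instead prove this estimate from scratch by splitting the kernel: Young's inequality for the near part, whose kernel $(t-s)^{\alpha-1}_+\mathbb{I}_{\{t-s\leq b-a+1\}}$ is in $L^1$ because $\alpha-1>-1$, and Cauchy--Schwarz for the far tail, where $2\alpha-2<-1$ (exactly the hypothesis $\alpha<1/2$) gives square-integrability of $s\mapsto (t-s)^{\alpha-1}$ on $(-\infty,a-1)$ uniformly in $t\in(a,b)$; both ingredients ($\alpha<1/2$ and boundedness of $(a,b)$) are the same ones the citation exploits, so your argument is a self-contained, elementary substitute for the reference, at the cost of a few extra lines, while the paper's version is shorter and reuses \cite{SKM93}, which is invoked repeatedly elsewhere (e.g.\ in Proposition \ref{ExGrad1}). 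Your handling of the tensor factor, via the chaos expansion and the observation that $(N+E)^{r/2}$ acts only on the chaos order while $I^{\alpha}_+\otimes E$ acts only on the deterministic variable, is just a more explicit rendering of the standard fact that $T\otimes\mathrm{id}$ inherits the norm of $T$ on a Hilbert tensor product, which is what the paper's ``extend by linearity and density'' implicitly uses; the unspecified exponent in your displayed sum is harmless precisely because of that commutation.
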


\begin{proof}
We define $I^{\alpha}_+\otimes E$ on simple element of $L^2(\mathbb{R})\otimes (\mathcal{W}^{r,2})$ by:
\begin{align*}
\forall \phi,\Phi\in L^2(\mathbb{R})\times (\mathcal{W}^{r,2}),\ (I^{\alpha}_+\otimes E)(\phi\otimes\Phi)=I^{\alpha}_+(\phi)\otimes\Phi.
\end{align*}
and we extend it by linearity. Moreover, we have:
\begin{align*}
\|I^{\alpha}_+(\phi)\otimes\Phi\|_{L^2((a,b))\otimes (\mathcal{W}^{r,2})}&=\|I^{\alpha}_+(\phi)\|_{L^2((a,b))}\|\Phi\|_{(\mathcal{W}^{r,2})},\\
&\leq C_{a,b,\alpha}\|I^{\alpha}_+(\phi)\|_{L^{\frac{2}{1-2\alpha}}(\mathbb{R})}\|\Phi\|_{(\mathcal{W}^{r,2})},\\
&\leq C_{a,b,\alpha}\|\phi\|_{L^2(\mathbb{R})}\|\Phi\|_{(\mathcal{W}^{r,2})}.
\end{align*}
since $I^{\alpha}_+$ is a continuous operator from $L^2(\mathbb{R})$ to $L^{\frac{2}{1-2\alpha}}(\mathbb{R})$ and $2/(1-2\alpha)>2$ (Theorem $5.3$ of \cite{SKM93}).
\end{proof}
\noindent
Consequently, we have the following result:

\begin{prop}\label{ExGrad1}
Let $r\geq 1$, $\alpha\in (0;1/2)$. There exists a continuous operator, denoted $\nabla^{\alpha}$, from $(\mathcal{W}^{r,2})$ into $L^2((a,b))\otimes (\mathcal{W}^{r-1,2})$ such that:
\begin{align*}
\forall \Phi\in(\mathcal{W}^{r,2}),\ \lambda\otimes\mu-a.e. (t,\omega)\in(a,b)\times S'(\mathbb{R}),\ \nabla^{\alpha}(\Phi)(t,\omega)=\sum_{n=1}^{\infty}nI_{n-1}(<\delta_t\circ I^{\alpha}_+;\phi_n>)(\omega),
\end{align*}
with $\Phi=\sum_{n=1}^{\infty}I_n(\phi_n)$.
\end{prop}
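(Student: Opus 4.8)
The plan is to build the operator $\nabla^{\alpha}$ as the composition of the $k$-th Fréchet-type first derivative $\nabla^{(1)}=\nabla$ on $(\mathcal{W}^{r,2})$ with the fractional-integration-in-space operator $I^{\alpha}_+$ acting on the first slot of the derivative. First I would invoke Theorem~\ref{ConOp} with $k=1$: the gradient $\nabla$ extends to a continuous operator from $(\mathcal{W}^{r,2})$ into $\hat{L}^2(\mathbb{R})\otimes(\mathcal{W}^{r-1,2})=L^2(\mathbb{R})\otimes(\mathcal{W}^{r-1,2})$. Then I would apply Lemma~\ref{ConI1} with the Sobolev index $r-1\geq 0$ in place of $r$: the operator $I^{\alpha}_+\otimes E$ extends continuously from $L^2(\mathbb{R})\otimes(\mathcal{W}^{r-1,2})$ into $L^2((a,b))\otimes(\mathcal{W}^{r-1,2})$. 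Composing these two continuous linear maps produces the desired continuous operator $\nabla^{\alpha}:=(I^{\alpha}_+\otimes E)\circ\nabla$ from $(\mathcal{W}^{r,2})$ into $L^2((a,b))\otimes(\mathcal{W}^{r-1,2})$, and continuity is automatic as a composition of continuous maps.

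It then remains to verify the explicit chaos formula on a dense subset and pass to the limit. On the polynomial algebra $(\mathcal{P})$, for $\Phi=\sum_{n=1}^{N}I_n(\phi_n)$ with $\phi_n\in\hat{S}(\mathbb{R}^n)$, the theorem on Gâteaux derivatives gives $\nabla(\Phi)(t,\omega)$ (i.e. the first derivative paired with $\delta_t$) equal to $\sum_{n=1}^{N}nI_{n-1}(\phi_n(t,\cdot))(\omega)$, or more precisely $\sum_n n I_{n-1}(<\delta_t;\phi_n>)$ in the appropriate distributional pairing. Applying $I^{\alpha}_+$ in the variable $t$ and using that $I^{\alpha}_+$ commutes with the (finite) sum and with $I_{n-1}$ acting on the remaining $n-1$ arguments, one gets $\nabla^{\alpha}(\Phi)(t,\omega)=\sum_{n=1}^{N}nI_{n-1}(<\delta_t\circ I^{\alpha}_+;\phi_n>)(\omega)$, where $<\delta_t\circ I^{\alpha}_+;\phi_n>$ denotes the kernel $\phi_n$ with its first slot transformed by $I^{\alpha}_+$ and then evaluated at $t$ — which for smooth $\phi_n$ is just $\tfrac{1}{\Gamma(\alpha)}\int_{\mathbb{R}}(t-s)^{\alpha-1}_+\phi_n(s,\cdot)\,ds$. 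For a general $\Phi\in(\mathcal{W}^{r,2})$, take a sequence $\Phi^{(m)}\in(\mathcal{P})$ with $\Phi^{(m)}\to\Phi$ in $(\mathcal{W}^{r,2})$; by continuity $\nabla^{\alpha}(\Phi^{(m)})\to\nabla^{\alpha}(\Phi)$ in $L^2((a,b))\otimes(\mathcal{W}^{r-1,2})$, hence (extracting a subsequence if needed) $\lambda\otimes\mu$-a.e., and the right-hand side converges to $\sum_{n=1}^{\infty}nI_{n-1}(<\delta_t\circ I^{\alpha}_+;\phi_n>)$ by Wiener-Itô orthogonality and the $L^2$-boundedness of $I^{\alpha}_+$. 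This yields the stated $\lambda\otimes\mu$-a.e. identity.

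The main obstacle I expect is purely a matter of careful bookkeeping rather than a deep difficulty: making precise the meaning of the symbol $<\delta_t\circ I^{\alpha}_+;\phi_n>$ and checking that $I^{\alpha}_+$ indeed acts only on the first tensor slot while $I_{n-1}$ integrates the remaining ones, i.e. that $(I^{\alpha}_+\otimes E)$ and the chaos decomposition are compatible in the sense that $I^{\alpha}_+\big(I_{n-1}(\psi(\cdot,\star))\big)=I_{n-1}\big((I^{\alpha}_+\psi)(\cdot,\star)\big)$ as elements of $L^2((a,b))\otimes(L^2)$. This is straightforward because $I^{\alpha}_+$ is a deterministic bounded operator on $L^2(\mathbb{R})$ and hence commutes with the (linear, isometric up to constants) multiple-integral map on the other coordinates; one checks it on simple tensors $\phi\otimes\Phi$ and extends by linearity and density, exactly as in the proof of Lemma~\ref{ConI1}. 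A minor point to watch is that passing from $L^2$-convergence to a.e.\ convergence requires extracting a subsequence, but since the limit is identified independently this causes no loss. Finally, one should note that the range is genuinely $L^2((a,b))$ (not $L^2(\mathbb{R})$) precisely because $I^{\alpha}_+$ only maps into $L^{2/(1-2\alpha)}_{loc}$ and the restriction to the bounded interval $(a,b)$ is what restores square-integrability, as quantified in Lemma~\ref{ConI1}.
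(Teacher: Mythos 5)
Your proposal is correct and follows essentially the same route as the paper: define $\nabla^{\alpha}=(I^{\alpha}_+\otimes E)\circ\nabla$, obtain continuity by composing Theorem \ref{ConOp} (with $k=1$) and Lemma \ref{ConI1}, verify the chaos formula on a dense set of smooth random variables via the G\^{a}teaux-derivative theorem, and pass to the limit by density and continuity. The only caveat is that your phrase ``$L^2$-boundedness of $I^{\alpha}_+$'' must be understood as the restricted estimate $\int_a^b\|\langle\delta_t\circ I^{\alpha}_+;\phi_n\rangle\|^2_{L^2(\mathbb{R}^{n-1})}\,dt\leq C\|\phi_n\|^2_{L^2(\mathbb{R}^n)}$ (Theorem 24.1 of \cite{SKM93} in the paper), not boundedness on $L^2(\mathbb{R})$ itself --- a point you do acknowledge correctly in your closing remark about the restriction to $(a,b)$.
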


\begin{proof} 
From Lemma \ref{ConI1} and Theorem \ref{ConOp}, the operator $\nabla^{\alpha}=(I^{\alpha}_+\otimes E)\circ\nabla$ is continuous from $(\mathcal{W}^{r,2})$ into $L^2((a,b))\otimes (\mathcal{W}^{r-1,2})$. We only have to prove that the previous equality holds. First of all, notice that, for $n\geq 1$, by Theorem $24.1$ of \cite{SKM93}:
\begin{align*}
\big(\int_a^b\|<\delta_t\circ I^{\alpha}_+;\phi_n>\|^2_{L^2(\mathbb{R}^{n-1})}dt\big)^{\frac{1}{2}}&\leq C_{\alpha,a,b}\|I^{(0,...,0,\alpha)}_{+,...,+}(\phi_n)\|_{L^{(2,...,2,\frac{2}{1-2\alpha})}(\mathbb{R}^n)},\\
&\leq C_{\alpha,a,b} \|\phi_n\|_{L^2(\mathbb{R}^n)}<+\infty.
\end{align*}
Then, $I_{n-1}(<\delta_{(.)}\circ I^{\alpha}_+;\phi_n>)(.)$ is an element of $L^2((a,b))\otimes (\mathcal{W}^{r-1,2})$. By a standard argument, one can show that $\sum_{n=1}^NnI_{n-1}(<\delta_{(.)}\circ I^{\alpha}_+;\phi_n>)(.)$ converges in $L^2((a,b))\otimes (\mathcal{W}^{r-1,2})$, since $\Phi\in (\mathcal{W}^{r,2})$, to an element which we denote by $\sum_{n=1}^{\infty}nI_{n-1}(<\delta_{(.)}\circ I^{\alpha}_+;\phi_n>)(.)$. Since $(S)$ is dense in $(\mathcal{W}^{r,2})$, there exists a sequence $(\Phi_n)\in (S)^{\mathbb{N}}$ such that $\Phi_n\underset{n\rightarrow+\infty}{\rightarrow}\Phi$ in $(\mathcal{W}^{r,2})$. By continuity, $\nabla^{\alpha}(\Phi_n)\underset{n\rightarrow+\infty}{\rightarrow}\nabla^{\alpha}(\Phi)$ in $L^2((a,b))\otimes (\mathcal{W}^{r-1,2})$. Moreover, for all $n\in\mathbb{N}$, we have:
\begin{align*}
\forall (t,\omega)\in\mathbb{R}\times S'(\mathbb{R}),\ \nabla^{\alpha}(\Phi_n)(t,\omega)&=((I^{\alpha}_+\otimes E)\circ\nabla)(\Phi_n)(t,\omega),\\
&=\int_{-\infty}^{+\infty}\dfrac{(t-s)^{\alpha-1}_{+}}{\Gamma(\alpha)}\nabla(\Phi_n)(s,\omega)ds,\\
&=I^{\alpha}_+(\nabla(\Phi_n)(.,\omega))(t),\\
&=<\delta_t;I^{\alpha}_+(\nabla(\Phi_n)(.,\omega))>,\\
&=<\delta_t\circ I^{\alpha}_+;\nabla(\Phi_n)(.,\omega)>,\\
&=D_{\delta_t\circ I^{\alpha}_+}(\Phi_n)(\omega),\\
&=\sum_{m=1}^{\infty}mI_{m-1}(<\delta_t\circ I^{\alpha}_+;\phi^n_m>)(\omega).
\end{align*}
where we use Theorem \ref{DevGF} for the next to last equality. Since $\Phi_n\underset{n\rightarrow+\infty}{\rightarrow}\Phi$ in $(\mathcal{W}^{r,2})$, one can show that $\sum_{m=1}^{\infty}mI_{m-1}(<\delta_{(.)}\circ I^{\alpha}_+;\phi^n_m>)(.)$ converges to $\sum_{m=1}^{\infty}mI_{m-1}(<\delta_{(.)}\circ I^{\alpha}_+;\phi_m>)(.)$ in $L^2((a,b))\otimes (\mathcal{W}^{r-1,2})$. Thus, the equality holds in $L^2((a,b))\otimes (\mathcal{W}^{r-1,2})$.
\end{proof}
\noindent
The next Lemma is a technical one needed to define the second order stochastic derivative operator related to the Rosenblatt process.

\begin{lem} 
For every $r\geq 0$, The operator $I^{H/2}_+\otimes I^{H/2}_+\otimes E$ admits a continuous extension from $L^2(\mathbb{R}^2)\otimes (\mathcal{W}^{r,2})$ to $L^2((a,b)\times (a,b))\otimes (\mathcal{W}^{r,2})$.
\end{lem}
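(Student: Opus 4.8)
The plan is to follow the pattern of Lemma~\ref{ConI1}, now in two spatial variables, the point again being that $(a,b)$ has finite Lebesgue measure. First I would define the operator on a simple element $\phi\otimes\Phi$ of $L^2(\mathbb{R}^2)\otimes(\mathcal{W}^{r,2})$ by
\begin{align*}
(I^{H/2}_+\otimes I^{H/2}_+\otimes E)(\phi\otimes\Phi)=\big(I^{H/2}_+\otimes I^{H/2}_+\big)(\phi)\otimes\Phi,
\end{align*}
extend it by linearity, and note that on such elements the norm factorizes,
\begin{align*}
\big\|\big(I^{H/2}_+\otimes I^{H/2}_+\big)(\phi)\otimes\Phi\big\|_{L^2((a,b)\times(a,b))\otimes(\mathcal{W}^{r,2})}=\big\|\big(I^{H/2}_+\otimes I^{H/2}_+\big)(\phi)\big\|_{L^2((a,b)\times(a,b))}\,\|\Phi\|_{(\mathcal{W}^{r,2})},
\end{align*}
so that the factor $(\mathcal{W}^{r,2})$ plays no role. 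It therefore suffices to prove the purely deterministic estimate $\|(I^{H/2}_+\otimes I^{H/2}_+)(\phi)\|_{L^2((a,b)^2)}\leq C_{a,b,H}\,\|\phi\|_{L^2(\mathbb{R}^2)}$ and then to use the density of simple tensors in $L^2(\mathbb{R}^2)\otimes(\mathcal{W}^{r,2})$ to conclude that the operator admits a continuous extension.

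For the deterministic estimate the key input is the mapping property of the one-dimensional Riemann--Liouville integral: since $H\in(1/2,1)$ one has $H/2\in(1/4,1/2)$, hence $I^{H/2}_+$ is continuous from $L^2(\mathbb{R})$ into $L^{2/(1-H)}(\mathbb{R})$, with $2/(1-H)>2$ (Theorem~$5.3$ of \cite{SKM93}). Applying this to the variable $x_2$ and then to the variable $x_1$ — that is, invoking the boundedness of the fractional integral acting on a single coordinate between mixed-norm Lebesgue spaces, exactly as in Theorem~$24.1$ of \cite{SKM93} and as already used in the proof of Proposition~\ref{ExGrad1} — gives
\begin{align*}
\big\|\big(I^{H/2}_+\otimes I^{H/2}_+\big)(\phi)\big\|_{L^{(2/(1-H),\,2/(1-H))}(\mathbb{R}^2)}\leq C_H\,\|\phi\|_{L^2(\mathbb{R}^2)}.
\end{align*}
Restricting both arguments to $(a,b)$ and using H\"older's inequality on the finite-measure space $(a,b)^2$ (here the two exponents of the mixed norm coincide, so it reduces to $L^{2/(1-H)}((a,b)^2)$) then yields
\begin{align*}
\big\|\big(I^{H/2}_+\otimes I^{H/2}_+\big)(\phi)\big\|_{L^2((a,b)^2)}\leq |b-a|^{H}\,\big\|\big(I^{H/2}_+\otimes I^{H/2}_+\big)(\phi)\big\|_{L^{2/(1-H)}((a,b)^2)}\leq C_H\,|b-a|^{H}\,\|\phi\|_{L^2(\mathbb{R}^2)},
\end{align*}
which is the claimed bound with $C_{a,b,H}=C_H\,|b-a|^{H}$.

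I do not expect any real obstacle: this is the two-dimensional analogue of Lemma~\ref{ConI1}, and every ingredient is already present in the excerpt. The only point requiring a little care is the bookkeeping of the mixed $L^p$ norms when $I^{H/2}_+$ is applied one coordinate at a time — one must check that the boundedness $L^2(\mathbb{R})\to L^{2/(1-H)}(\mathbb{R})$ lifts first to $L^{(2,2)}(\mathbb{R}^2)\to L^{(2,\,2/(1-H))}(\mathbb{R}^2)$ and then to $L^{(2/(1-H),\,2/(1-H))}(\mathbb{R}^2)$, which amounts to a pointwise application of the one-dimensional bound followed by Minkowski's integral inequality (valid since $2/(1-H)\geq 2$). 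This is precisely the mixed-norm content of Theorem~$24.1$ of \cite{SKM93} invoked in the proof of Proposition~\ref{ExGrad1}, so no new idea is needed.
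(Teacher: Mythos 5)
Your proof is correct and is exactly what the paper intends: the paper's proof consists of the single remark that the argument of Lemma \ref{ConI1} carries over, and your write-up is precisely that two-variable analogue (simple tensors, factorization of the norm, the mapping property $I^{H/2}_+\colon L^2(\mathbb{R})\to L^{2/(1-H)}(\mathbb{R})$ applied coordinatewise via the mixed-norm Theorem $24.1$ of \cite{SKM93}, H\"older on the finite-measure square $(a,b)^2$, and density). No discrepancy with the paper's approach.
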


\begin{proof}
The proof is similar to one of Lemma \ref{ConI1}. $\Box$
\end{proof}
\noindent
Consequently, we have the following result:

\begin{prop}\label{ExGrad2}
Let $r\geq 2$. There exists a continuous operator, denoted $\nabla^{(2),H/2}$, from $(\mathcal{W}^{r,2})$ to $L^2((a,b)\times (a,b))\otimes (\mathcal{W}^{r-2,2})$ such that, for every $\Phi\in(\mathcal{W}^{r,2})$:
\begin{align*}
\lambda^{\otimes 2}\otimes\mu-a.e. (s,t,\omega)\in(a,b)^2\times S'(\mathbb{R}),\ \nabla^{(2),\frac{H}{2}}(\Phi)(s,t,\omega)=\sum_{n=2}^{\infty}n(n-1)I_{n-2}(<\delta_s\circ I_+^{\frac{H}{2}}\otimes\delta_t\circ I^{\frac{H}{2}}_+;\phi_n>)(\omega),
\end{align*}
with $\Phi=\sum_{n=0}^{\infty}I_n(\phi_n)$.
\end{prop}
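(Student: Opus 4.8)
The plan is to follow closely the proof of Proposition \ref{ExGrad1}, replacing the single fractional integral by a tensor product of two fractional integrals and the first Fr\'echet gradient by the second one. Concretely, I would \emph{define}
\[
\nabla^{(2),\frac{H}{2}}=\big(I^{\frac{H}{2}}_+\otimes I^{\frac{H}{2}}_+\otimes E\big)\circ\nabla^{(2)}.
\]
Since $H\in(1/2,1)$ we have $H/2\in(0,1/2)$, so by Theorem \ref{ConOp} the operator $\nabla^{(2)}$ is continuous from $(\mathcal{W}^{r,2})$ into $\hat{L}^2(\mathbb{R}^2)\otimes(\mathcal{W}^{r-2,2})$ for $r\geq 2$, and by the preceding Lemma the operator $I^{\frac{H}{2}}_+\otimes I^{\frac{H}{2}}_+\otimes E$ is continuous from $L^2(\mathbb{R}^2)\otimes(\mathcal{W}^{r-2,2})$ into $L^2((a,b)^2)\otimes(\mathcal{W}^{r-2,2})$. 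Composing the two gives the asserted continuity, and it only remains to identify the chaos expansion.

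For the explicit formula I would first treat $\Phi=\sum_{n}I_n(\phi_n)\in(S)$. By Theorem \ref{DevGF}, for $y_1,y_2\in S'(\mathbb{R})$ one has $D_{y_1}\circ D_{y_2}(\Phi)=\langle y_1\otimes y_2;\nabla^{(2)}(\Phi)\rangle$ in $(S)$; applying this with $y_1=\delta_s\circ I^{\frac{H}{2}}_+$ and $y_2=\delta_t\circ I^{\frac{H}{2}}_+$ and unwinding the definition of $D_y$ recalled above yields, for every $(s,t)$,
\[
\nabla^{(2),\frac{H}{2}}(\Phi)(s,t,\omega)=\sum_{n=2}^{\infty}n(n-1)I_{n-2}\big(\langle\delta_s\circ I^{\frac{H}{2}}_+\otimes\delta_t\circ I^{\frac{H}{2}}_+;\phi_n\rangle\big)(\omega),
\]
which is exactly the claimed identity on $(S)$. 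Here one also has to observe that applying $I^{\frac{H}{2}}_+\otimes I^{\frac{H}{2}}_+\otimes E$ to $\nabla^{(2)}(\Phi)$ amounts to evaluating the two fractional integrals of $\nabla^{(2)}(\Phi)(\cdot,\cdot,\omega)$ at $(s,t)$, i.e. to pairing against $\delta_s\circ I^{\frac{H}{2}}_+\otimes\delta_t\circ I^{\frac{H}{2}}_+$, which is immediate from the definitions, exactly as in the chain of equalities at the end of the proof of Proposition \ref{ExGrad1}.

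To pass from $(S)$ to $(\mathcal{W}^{r,2})$ I would invoke the density of $(S)$ in $(\mathcal{W}^{r,2})$: choose $\Phi^{(k)}\in(S)$ with $\Phi^{(k)}\to\Phi$ in $(\mathcal{W}^{r,2})$; by continuity $\nabla^{(2),\frac{H}{2}}(\Phi^{(k)})\to\nabla^{(2),\frac{H}{2}}(\Phi)$ in $L^2((a,b)^2)\otimes(\mathcal{W}^{r-2,2})$. The key quantitative input is that, for $n\geq 2$, Theorem $24.1$ (together with Theorem $5.3$) of \cite{SKM93}, applied in the first two variables, gives
\[
\Big(\int_{(a,b)^2}\big\|\langle\delta_s\circ I^{\frac{H}{2}}_+\otimes\delta_t\circ I^{\frac{H}{2}}_+;\phi_n\rangle\big\|^2_{L^2(\mathbb{R}^{n-2})}\,ds\,dt\Big)^{1/2}\leq C_{H,a,b}\,\|\phi_n\|_{L^2(\mathbb{R}^n)},
\]
so that $\langle\delta_{(\cdot)}\circ I^{\frac{H}{2}}_+\otimes\delta_{(\cdot)}\circ I^{\frac{H}{2}}_+;\phi_n\rangle$ is defined $\lambda^{\otimes 2}$-a.e. and each $I_{n-2}(\langle\delta_{(\cdot)}\circ I^{\frac{H}{2}}_+\otimes\delta_{(\cdot)}\circ I^{\frac{H}{2}}_+;\phi_n\rangle)$ belongs to $L^2((a,b)^2)\otimes(\mathcal{W}^{r-2,2})$. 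Since the coefficient $[n(n-1)]^2$ together with the weight $(n-1)^{r-2}(n-2)!$ carried by the chaos of order $n-2$ is dominated by $C\,n!\,(n+1)^r$, the partial sums $\sum_{n=2}^{N}n(n-1)I_{n-2}(\cdots)$ are Cauchy in $L^2((a,b)^2)\otimes(\mathcal{W}^{r-2,2})$ whenever $\Phi\in(\mathcal{W}^{r,2})$ with $r\geq 2$ --- and this is precisely where the hypothesis $r\geq 2$ is used. Applying this estimate to $\Phi^{(k)}-\Phi$ shows that the series attached to $\Phi^{(k)}$ converges in $L^2((a,b)^2)\otimes(\mathcal{W}^{r-2,2})$ to the series attached to $\Phi$; comparing with the previous limit identifies $\nabla^{(2),\frac{H}{2}}(\Phi)$ with that series, so the formula extends to $(\mathcal{W}^{r,2})$. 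I expect the main obstacle to be the mixed-norm fractional-integration bound in two variables and the bookkeeping of the factor $n(n-1)$ against the Sobolev--Watanabe--Kree weights; the rest is a routine adaptation of Proposition \ref{ExGrad1}.
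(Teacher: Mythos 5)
Your proposal is correct and follows essentially the same route as the paper: define $\nabla^{(2),\frac{H}{2}}=\big(I^{\frac{H}{2}}_+\otimes I^{\frac{H}{2}}_+\otimes E\big)\circ\nabla^{(2)}$, get continuity from the preceding Lemma and Theorem \ref{ConOp}, and identify the chaos expansion by the same argument as Proposition \ref{ExGrad1} (Theorem \ref{DevGF} on $(S)$, the two-variable mixed-norm bound from Theorem $24.1$ of \cite{SKM93}, and density of $(S)$ in $(\mathcal{W}^{r,2})$). Your quantitative bookkeeping of the factor $n(n-1)$ against the weights is a correct filling-in of what the paper leaves implicit.
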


\begin{proof} 
The operator $\nabla^{(2),H/2}=I^{\frac{H}{2}}_+\otimes I^{\frac{H}{2}}_+\otimes E\circ \nabla^{(2)}$ is a continuous operator from $(\mathcal{W}^{r,2})$ to $L^2((a,b)\times (a,b);(\mathcal{W}^{r-2,2}))$ by the previous Lemma and Theorem \ref{ConOp}. The equality is proved similarly to the one of Proposition \ref{ExGrad1} noting that, for $n\geq 2$:
\begin{align*}
\big(\int_{(a,b)\times(a,b)}\|<\delta_s\circ I_+^{\frac{H}{2}}\otimes\delta_t\circ I^{\frac{H}{2}}_+;\phi_n>\|^2_{L^2(\mathbb{R}^{n-2})}dsdt\big)^{\frac{1}{2}}&\leq C_{H,a,b}\|I^{(0,...,0,\frac{H}{2},\frac{H}{2})}_{+,..,+}(\phi_n)\|_{L^{(2,...,2,\frac{2}{1-H},\frac{2}{1-H})}(\mathbb{R}^n)},\\
&\leq C'_{H,a,b}\|\phi_n\|_{L^2(\mathbb{R}^n)}<\infty.
\end{align*}
allows to define an element of $L^2((a,b)\times (a,b);(\mathcal{W}^{r-2,2}))$ as $I_{n-2}(<\delta_{(.)}\circ I_+^{\frac{H}{2}}\otimes\delta_{(.)}\circ I^{\frac{H}{2}}_+;\phi_n>)(.)$. 
\end{proof}
\noindent
Next, we briefly remind the definitions of the fractional noise and of the Rosenblatt noise integrals for stochastic integrand processes which are $(S)^*$-integrable on $(a,b)$.

\begin{prop}
Let $\{\Phi_t;t\in (a,b)\}$ be a $(S)^*$ stochastic process such that:
\begin{enumerate}
\item $\forall\xi\in S(\mathbb{R})$, $S(\Phi_.)(\xi)$ is measurable on $(a,b)$.
\item There is a $p\in\mathbb{N}$, a strictly positive constant $a$ and a non-negative function $L\in L^1((a,b))$ such that:
\begin{align*}
\forall\xi\in S(\mathbb{R}),\ |S(\Phi_t)(\xi)|\leq L(t)\exp\left(a||A^p\xi||^2_2\right)
\end{align*}
\end{enumerate}
Then, $\Phi_t\diamond \dot{B}^H_t$ and $\Phi_t\diamond \dot{X}^H_t$ are $(S)^*$ integrable over $(a,b)$ and we define the fractional noise integral and the Rosenblatt noise integral of $\{\Phi_t\}$ by:
\begin{align*}
\int_{(a,b)}\Phi_tdB^H_t=\int_{(a,b)}\Phi_t\diamond \dot{B}^H_tdt,\\
\int_{(a,b)}\Phi_tdX^H_t=\int_{(a,b)}\Phi_t\diamond \dot{X}^H_tdt.
\end{align*}
Moreover, we have the following representation:
\begin{align*}
\int_{(a,b)}\Phi_tdB^H_t=\int_{(a,b)} (D^*_{A(H)\delta_t\circ I^{H-\frac{1}{2}}_+})(\Phi_t)dt,\\
\int_{(a,b)}\Phi_tdX^H_t=\int_{(a,b)} (D^*_{\sqrt{d(H)}\delta_t\circ I^{\frac{H}{2}}_+})^2(\Phi_t)dt.
\end{align*}
\end{prop}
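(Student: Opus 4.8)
The plan is to verify that the two hypotheses of the preceding integrability criterion (the measurability of $\xi\mapsto S(\Phi_t\diamond\dot B^H_t)(\xi)$ in $t$, and the exponential bound with an $L^1$ prefactor) are inherited by the Wick products $\Phi_t\diamond\dot B^H_t$ and $\Phi_t\diamond\dot X^H_t$ from the assumptions on $\{\Phi_t\}$ together with the explicit $S$-transforms of the fractional and Rosenblatt noises computed in the previous Definition. First I would recall that, by item~3 of Theorem~\ref{STrans}, $S(\Phi_t\diamond\dot B^H_t)(\xi)=S(\Phi_t)(\xi)\,S(\dot B^H_t)(\xi)=A(H)\,S(\Phi_t)(\xi)\,I^{H-\frac12}_+(\xi)(t)$, and similarly $S(\Phi_t\diamond\dot X^H_t)(\xi)=d(H)\,S(\Phi_t)(\xi)\,\big(I^{H/2}_+(\xi)(t)\big)^2$. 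Measurability in $t$ of the right-hand sides follows from measurability of $t\mapsto S(\Phi_t)(\xi)$ (hypothesis~1) times the map $t\mapsto I^{H-\frac12}_+(\xi)(t)$, which for fixed $\xi\in S(\mathbb R)$ is even continuous on $(a,b)$, hence measurable; the product of measurable functions is measurable.

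Next I would establish the exponential estimate. Using hypothesis~2 we have $|S(\Phi_t)(\xi)|\le L(t)\exp(a\|A^p\xi\|_2^2)$. It therefore suffices to bound $|I^{H-\frac12}_+(\xi)(t)|$ (resp. $|I^{H/2}_+(\xi)(t)|^2$) uniformly in $t\in(a,b)$ by something of the form $K\exp(a'\|A^{p'}\xi\|_2^2)$ for constants independent of $t$; then the product of the two exponential factors is again of the admissible form $K'\exp(a''\|A^{p''}\xi\|_2^2)$ and the new $L^1$ prefactor is a constant multiple of $L(t)$, still in $L^1((a,b))$. For the fractional case, pointwise in $t$ one writes $I^{H-\frac12}_+(\xi)(t)=\frac{1}{\Gamma(H-\frac12)}\int_{\mathbb R}(t-s)^{H-\frac32}_+\xi(s)\,ds$ and bounds it, e.g. by Cauchy--Schwarz or a direct estimate using the rapid decrease of $\xi$ and the local integrability of $(t-s)^{H-\frac32}_+$ near $s=t$ (valid since $H-\frac32>-1$), giving a bound by a Schwartz seminorm of $\xi$, which in turn is dominated by $C\|A^{p'}\xi\|_2$ for suitable $p'$ (the $A^p$-norms control all Schwartz seminorms); finally $x\le e^{x^2}$ absorbs this into the exponential. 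Uniformity over the bounded interval $(a,b)$ is immediate from the explicit $t$-dependence. The Rosenblatt case is identical after squaring. With both hypotheses verified, the previous criterion applies and yields $(S)^*$-integrability of $\Phi_t\diamond\dot B^H_t$ and $\Phi_t\diamond\dot X^H_t$ over $(a,b)$, which is precisely the content needed for the displayed definitions of the fractional and Rosenblatt noise integrals.

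Finally, the representation formulas follow by identifying $S$-transforms and invoking injectivity (item~1 of Theorem~\ref{STrans}). Indeed, by the Definition preceding the Proposition, $S(\dot B^H_t)(\xi)=A(H)I^{H-\frac12}_+(\xi)(t)=\langle A(H)\delta_t\circ I^{H-\frac12}_+;\xi\rangle$, so by the Theorem on $D^*_y$ one has $S\big(D^*_{A(H)\delta_t\circ I^{H-\frac12}_+}(\Phi_t)\big)(\xi)=\langle A(H)\delta_t\circ I^{H-\frac12}_+;\xi\rangle\,S(\Phi_t)(\xi)=S(\Phi_t\diamond\dot B^H_t)(\xi)$; integrating in $t$ and using that $S$ commutes with the $(S)^*$-valued Bochner-type integral gives $\int_{(a,b)}\Phi_t\diamond\dot B^H_t\,dt=\int_{(a,b)}D^*_{A(H)\delta_t\circ I^{H-\frac12}_+}(\Phi_t)\,dt$. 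For the Rosenblatt case one uses that $S(\dot X^H_t)(\xi)=d(H)\big(I^{H/2}_+(\xi)(t)\big)^2=\langle\sqrt{d(H)}\delta_t\circ I^{H/2}_+;\xi\rangle^2$, so applying $D^*_{\sqrt{d(H)}\delta_t\circ I^{H/2}_+}$ twice multiplies the $S$-transform by exactly this square. I expect the only genuinely delicate point to be the uniform-in-$t$ exponential bound on the fractional integral factor --- specifically, making precise that a pointwise bound by a Schwartz seminorm of $\xi$ can be upgraded to the required $K\exp(a'\|A^{p'}\xi\|_2^2)$ form with constants independent of $t\in(a,b)$; everything else is a routine application of the $S$-transform calculus and Theorem~\ref{STrans}.
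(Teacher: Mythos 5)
Your argument is correct, but note that the paper does not actually prove this proposition: its ``proof'' is a pointer to Section 3.3 of \cite{B03} for the fractional noise integral and to Definition--Theorem 3.10 of \cite{A15} for the Rosenblatt noise integral. What you wrote is essentially the standard construction underlying those references: check the hypotheses of the Hida--Kuo criterion for $(S)^*$-valued (weak, Pettis-type) integrals for the Wick products, using $S(\Phi_t\diamond\dot B^H_t)(\xi)=S(\Phi_t)(\xi)S(\dot B^H_t)(\xi)$ and its Rosenblatt analogue, and then identify $S$-transforms and invoke injectivity (Theorem \ref{STrans}) to get the $D^*_y$ representation. Two points to make explicit if you flesh this out: (i) the ``preceding integrability criterion'' you invoke is nowhere stated in this paper --- you need Theorem 13.5 of \cite{K96} (or its analogue in \cite{HKPS93}), whose conclusion also supplies the interchange $S\big(\int_{(a,b)}\Phi_t\,dt\big)(\xi)=\int_{(a,b)}S(\Phi_t)(\xi)\,dt$ that you use to pass from the pointwise identity $\Phi_t\diamond\dot B^H_t=D^*_{A(H)\delta_t\circ I^{H-\frac12}_+}(\Phi_t)$ to the identity between integrals; (ii) your ``delicate point'' is in fact harmless: splitting the integral at $|t-s|=1$ gives $\sup_{t\in\mathbb{R}}|I^{H-\frac12}_+(\xi)(t)|\leq C_H\big(\|\xi\|_{\infty}+\|\xi\|_{L^1(\mathbb{R})}\big)$ (and similarly for $I^{H/2}_+$), every Schwartz seminorm is dominated by some $\|A^{p'}\xi\|_{L^2(\mathbb{R})}$, and $u\leq e^{u^2}$ together with the monotonicity of $p\mapsto\|A^{p}\xi\|_{L^2(\mathbb{R})}$ merges the two exponential factors into a single admissible one with an $L^1$ prefactor $CL(t)$; the same seminorm bound shows $\delta_t\circ I^{H-\frac12}_+$ and $\delta_t\circ I^{\frac H2}_+$ belong to $S'(\mathbb{R})$, which is what licenses applying the theorem on $D^*_y$ (once, respectively twice). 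With those references and remarks added, your proof is complete and self-contained; what the paper's citation buys is brevity and consistency with the constructions of the fractional and Rosenblatt Wick--It\^o integrals already carried out in \cite{B03} and \cite{A15}.
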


\begin{proof}
See Definition-Theorem $3.10$ of \cite{A15} for the Rosenblatt noise integral and Section $3.3$ of \cite{B03} for the fractional noise integral.
\end{proof}
\noindent
Finally, we compute the Hilbert space adjoint of $\nabla^{\alpha}$, for $\alpha\in (0;1/2)$, and link this operator with the fractional noise integral for a certain class of stochastic integrand processes.

\begin{prop}\label{ExAdGrad1}
Let $\alpha\in (0,1/2)$ and $r\geq 1$. The operator $(\nabla^{\alpha})^*=\nabla^*\circ (I^{\alpha}_{-}(\mathbb{I}_{(a,b)}.)\otimes E)$ is a linear and continuous operator from $L^2((a,b))\otimes (\mathcal{W}^{r,2})$ into $(\mathcal{W}^{r-1,2})$, where $I^{\alpha}_{-}$ is defined by:
\begin{align*}
\forall\xi\in S(\mathbb{R}),\ I^{\alpha}_{-}(\xi)(t)=\dfrac{1}{\Gamma(\alpha)}\int_{\mathbb{R}}(s-t)^{\alpha-1}_{+}\xi(s)ds.
\end{align*}
\end{prop}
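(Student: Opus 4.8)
The plan is to identify $(\nabla^{\alpha})^*$ explicitly as the composition claimed and then read off continuity from continuity of the pieces. First I would recall that, by definition (Proposition \ref{ExGrad1}), $\nabla^{\alpha}=(I^{\alpha}_+\otimes E)\circ\nabla$ as a continuous operator from $(\mathcal{W}^{r,2})$ into $L^2((a,b))\otimes(\mathcal{W}^{r-1,2})$. Taking Hilbert-space adjoints with respect to the appropriate $(L^2)$-pairings formally gives $(\nabla^{\alpha})^*=\nabla^*\circ(I^{\alpha}_+\otimes E)^*$, so the content of the statement is the computation of $(I^{\alpha}_+\otimes E)^*$, i.e. the adjoint of the fractional integration operator $I^{\alpha}_+$ as a map $L^2(\mathbb{R})\to L^2((a,b))$. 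The classical fractional-integration-by-parts formula (see e.g. \cite{SKM93}, chapter 2) states that for suitable $f,g$,
\begin{align*}
\int_a^b \big(I^{\alpha}_+f\big)(t)\,g(t)\,dt=\int_{\mathbb{R}} f(s)\,\big(I^{\alpha}_-(\mathbb{I}_{(a,b)}g)\big)(s)\,ds,
\end{align*}
which identifies $(I^{\alpha}_+\otimes E)^*$ with $I^{\alpha}_-(\mathbb{I}_{(a,b)}\cdot)\otimes E$ acting from $L^2((a,b))\otimes(\mathcal{W}^{r,2})$ into $L^2(\mathbb{R})\otimes(\mathcal{W}^{r,2})$.

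Concretely, I would carry this out on simple tensors and polynomial random variables, where everything is literal. For $\phi\in L^2((a,b))$ and a polynomial $\Phi\in(\mathcal{P})$, apply $I^{\alpha}_-(\mathbb{I}_{(a,b)}\cdot)$ in the first variable to get an element of $L^2(\mathbb{R})\otimes(\mathcal{W}^{r,2})$; the needed mapping property is that $I^{\alpha}_-$ restricted to functions supported on the bounded interval $(a,b)$ maps into $L^2(\mathbb{R})$, which follows from the boundedness of $I^{\alpha}_-$ from $L^{2/(1+2\alpha)}$ (or simply $L^2((a,b))\hookrightarrow L^{2/(1+2\alpha)}((a,b))$ on a bounded interval) into $L^2(\mathbb{R})$, exactly the mirror of the estimate used in Lemma \ref{ConI1}. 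Composing with $\nabla^*$, which by Proposition \ref{ConAdOp} is continuous from $L^2(\mathbb{R})\otimes(\mathcal{W}^{r,2})$ into $(\mathcal{W}^{r-1,2})$, yields a continuous linear operator $\nabla^*\circ(I^{\alpha}_-(\mathbb{I}_{(a,b)}\cdot)\otimes E)$ from $L^2((a,b))\otimes(\mathcal{W}^{r,2})$ into $(\mathcal{W}^{r-1,2})$. Then I would verify the adjoint relation $\langle\nabla^{\alpha}\Phi;X\rangle=\langle\Phi;(\nabla^{\alpha})^*X\rangle$ for $\Phi\in(\mathcal{P})$ and $X$ a simple tensor, using the fractional-integration-by-parts identity above together with the known duality $\langle\nabla\Phi;Y\rangle=\langle\Phi;\nabla^*Y\rangle$, and conclude by density of $(\mathcal{P})$ and of simple tensors, together with the established continuity.

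I expect the only genuine technical point to be the mapping property of $I^{\alpha}_-$ on indicator-truncated functions: one must check that $I^{\alpha}_-(\mathbb{I}_{(a,b)}\phi)$ really lands in $L^2(\mathbb{R})$ (not merely in $L^2$ of some subinterval), with a bound by $\|\phi\|_{L^2((a,b))}$ up to a constant depending on $a,b,\alpha$. This is the exact dual counterpart of the argument in Lemma \ref{ConI1}, so it amounts to invoking Theorem 5.3 of \cite{SKM93} for the operator $I^{\alpha}_-$ and using that a bounded interval embeds $L^2\hookrightarrow L^{2/(1+2\alpha)}$; once this is in hand, the rest is bookkeeping with adjoints and densities. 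A minor care point is the consistent placement of the indicator $\mathbb{I}_{(a,b)}$ so that the integration-by-parts formula is applied on the correct domain, but this is routine.
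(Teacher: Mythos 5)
Your proposal is correct and follows essentially the same route as the paper: write $(\nabla^{\alpha})^*=\nabla^*\circ(I^{\alpha}_+\otimes E)^*$, identify $(I^{\alpha}_+\otimes E)^*$ with $I^{\alpha}_-(\mathbb{I}_{(a,b)}\cdot)\otimes E$ via the fractional integration-by-parts formula of \cite{SKM93} checked on simple tensors and extended by density and boundedness, and conclude by the continuity of $\nabla^*$ from $L^2(\mathbb{R})\otimes(\mathcal{W}^{r,2})$ into $(\mathcal{W}^{r-1,2})$. Your explicit verification that $I^{\alpha}_-(\mathbb{I}_{(a,b)}\phi)$ lands in $L^2(\mathbb{R})$ with a bound by $\|\phi\|_{L^2((a,b))}$ is a welcome addition that the paper leaves implicit, but it does not change the argument.
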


\begin{proof}
 By Proposition \ref{ExGrad1}, $\nabla^{\alpha}=(I^{\alpha}_{+}\otimes E)\circ \nabla$ is a linear and continuous operator from $(\mathcal{W}^{r,2})$ into $L^2((a,b))\otimes (\mathcal{W}^{r-1,2})$. Thus, by definition, $(\nabla^{\alpha})^*$ is a linear and continuous operator from $L^2((a,b))\otimes (\mathcal{W}^{r-1,2})^*$ into $(\mathcal{W}^{r,2})^*$. Moreover, $(\nabla^{\alpha})^*$ is equal to $\nabla^*\circ (I^{\alpha}_{+}\otimes E)^*$. Let us compute $(I^{\alpha}_{+}\otimes E)^*$. We have, for every $s\geq 0$, $f\in L^2((a,b))\otimes(\mathcal{W}^{s,2})^*$ and $g\in L^2(\mathbb{R})\otimes(\mathcal{W}^{s,2})$:
\begin{align*}
<(I^{\alpha}_{+}\otimes E)^*(f);g>_{(L^2(\mathbb{R})\otimes(\mathcal{W}^{s,2})^*,L^2(\mathbb{R})\otimes(\mathcal{W}^{s,2}))}=<f,(I^{\alpha}_{+}\otimes E)(g)>_{(L^2((a,b))\otimes(\mathcal{W}^{s,2})^*,L^2((a,b))\otimes(\mathcal{W}^{s,2}))}.
\end{align*}
Assume $f=f_1\otimes F_1$ and $g=g_1\otimes G_1$. Then, by relation $5.16$ of \cite{SKM93}:
\begin{align*}
<(I^{\alpha}_{+}\otimes E)^*(f);g>&_{(L^2(\mathbb{R})\otimes(\mathcal{W}^{s,2})^*,L^2(\mathbb{R})\otimes(\mathcal{W}^{s,2}))}=<f_1;I^{\alpha}_+(g_1)>_{L^2((a,b))}<F_1;G_1>_{((\mathcal{W}^{s,2})^*,(\mathcal{W}^{s,2}))},\\
&=<I^{\alpha}_{-}(\mathbb{I}_{(a,b)}f_1);g_1>_{L^2(\mathbb{R})}<F_1;G_1>_{((\mathcal{W}^{s,2})^*,(\mathcal{W}^{s,2}))},\\
&=<(I^{\alpha}_{-}(\mathbb{I}_{(a,b)}.)\otimes E)(f_1\otimes F_1);g_1\otimes G_1>_{(L^2(\mathbb{R})\otimes(\mathcal{W}^{s,2})^*,L^2(\mathbb{R})\otimes(\mathcal{W}^{s,2}))}.
\end{align*}
Thus, $(I^{\alpha}_{-}(\mathbb{I}_{(a,b)}.)\otimes E)$ and $(I^{\alpha}_{+}\otimes E)^*$ coincide on simple elements of $L^2((a,b))\otimes(\mathcal{W}^{s,2})^*$. Since, both operators are linear and bounded operators on $L^2((a,b))\otimes(\mathcal{W}^{s,2})^*$, they agree on $L^2((a,b))\otimes(\mathcal{W}^{s,2})^*$. Consequently, $(\nabla^{\alpha})^*$ is equal to $\nabla^*\circ (I^{\alpha}_{-}(\mathbb{I}_{(a,b)}.)\otimes E)$ on $L^2((a,b))\otimes (\mathcal{W}^{r-1,2})^*$. Moreover, it is well known that the operator $\nabla^*$ is a linear and continuous operator from $L^2(\mathbb{R})\otimes (\mathcal{W}^{r,2})$ into $(\mathcal{W}^{r-1,2})$ for any $r\geq 1$ (see Proposition $5.27$ of \cite{HKPS93}). This concludes the proof. 
\end{proof}

\begin{prop}\label{RepAdGrad1}
Let $\alpha\in (0,1/2)$ and $\Phi\in L^2((a,b))\otimes (\mathcal{W}^{1,2})$. We have:
\begin{align*}
(\nabla^{\alpha})^*(\Phi)=\int_a^bD^*_{\delta_t\circ I^{\alpha}_+}(\Phi_t)dt.
\end{align*}
\end{prop}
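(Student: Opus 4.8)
The plan is to verify the identity by computing the $S$-transform of both sides and invoking injectivity of the $S$-transform (Theorem \ref{STrans}, item 1). By Proposition \ref{ExAdGrad1} we already know that $(\nabla^{\alpha})^* = \nabla^* \circ (I^{\alpha}_{-}(\mathbb{I}_{(a,b)}\cdot)\otimes E)$ is a well-defined continuous operator from $L^2((a,b))\otimes(\mathcal{W}^{1,2})$ into $(\mathcal{W}^{0,2}) = (L^2)\subset (S)^*$, so the left-hand side makes sense; and since $\Phi\in L^2((a,b))\otimes(\mathcal{W}^{1,2})$, for a.e.\ $t$ we have $\Phi_t\in(\mathcal{W}^{1,2})$, so $D^*_{\delta_t\circ I^{\alpha}_+}(\Phi_t)$ is a well-defined Hida distribution (indeed the theorem on $D^*_y$ applies with $y = \delta_t\circ I^{\alpha}_+ \in S'(\mathbb{R})$), and one checks using the bound in Proposition \ref{ExGrad1}'s proof (the $L^2$-in-$t$ control of $\langle\delta_t\circ I^{\alpha}_+;\phi_n\rangle$) together with the characterization theorem of $(S)^*$-integrability that $t\mapsto D^*_{\delta_t\circ I^{\alpha}_+}(\Phi_t)$ is $(S)^*$-integrable on $(a,b)$, so the right-hand side also makes sense.

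Next I would compute $S\big((\nabla^{\alpha})^*(\Phi)\big)(\xi)$ for $\xi\in S(\mathbb{R})$. Writing $\Phi_t = \sum_{n\geq 0} I_n(\phi_n(t,\cdot))$, and using that $\nabla^* $ acts as $S(\nabla^*(\Psi))(\xi) = \int_{\mathbb{R}} \xi(u)\, S(\Psi(u,\cdot))(\xi)\, du$ for kernels $\Psi\in L^2(\mathbb{R})\otimes(\mathcal{W}^{1,2})$ (this is the integrated form of the $S$-transform rule $S(D^*_y\Psi)(\xi) = \langle y;\xi\rangle S(\Psi)(\xi)$ from the theorem on $D^*_y$, applied to $y=\delta_u$ and integrated in $u$), I get
\begin{align*}
S\big((\nabla^{\alpha})^*(\Phi)\big)(\xi) = \int_{\mathbb{R}} \xi(u)\, S\Big(\big(I^{\alpha}_{-}(\mathbb{I}_{(a,b)}\cdot)\otimes E\big)(\Phi)(u,\cdot)\Big)(\xi)\, du.
\end{align*}
Now $\big(I^{\alpha}_{-}(\mathbb{I}_{(a,b)}\cdot)\otimes E\big)(\Phi)(u,\cdot) = \frac{1}{\Gamma(\alpha)}\int_a^b (t-u)^{\alpha-1}_+ \Phi_t\, dt$, so by linearity of $S$ and Fubini this becomes $\int_a^b S(\Phi_t)(\xi)\cdot \big(\frac{1}{\Gamma(\alpha)}\int_{\mathbb{R}} \xi(u)(t-u)^{\alpha-1}_+\, du\big)\, dt = \int_a^b S(\Phi_t)(\xi)\, I^{\alpha}_+(\xi)(t)\, dt$, where I used the definition of $I^{\alpha}_+$ given just after Definition of the noises.

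On the other side, by the theorem on $D^*_y$ with $y = \delta_t\circ I^{\alpha}_+$ we have $S\big(D^*_{\delta_t\circ I^{\alpha}_+}(\Phi_t)\big)(\xi) = \langle \delta_t\circ I^{\alpha}_+;\xi\rangle\, S(\Phi_t)(\xi) = I^{\alpha}_+(\xi)(t)\, S(\Phi_t)(\xi)$; integrating in $t$ and commuting the $S$-transform with the $(S)^*$-valued integral (valid since the integral converges in $(S)^*$ and $S$ is continuous for that topology) gives $S\big(\int_a^b D^*_{\delta_t\circ I^{\alpha}_+}(\Phi_t)\, dt\big)(\xi) = \int_a^b I^{\alpha}_+(\xi)(t)\, S(\Phi_t)(\xi)\, dt$, which matches the expression above. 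Injectivity of the $S$-transform then yields the claimed equality in $(S)^*$, hence in $(\mathcal{W}^{0,2})$ since we already know the left-hand side lies there. I expect the only genuinely delicate point to be the justification of the various Fubini interchanges and the commutation of $S$ with the $(S)^*$-integral — i.e.\ checking the uniform exponential bound $|S(\Phi_t)(\xi)|\leq L(t)\exp(a\|A^p\xi\|_2^2)$ from the $(S)^*$-integrability criterion, which follows from $\Phi_t\in(\mathcal{W}^{1,2})$ together with the continuity of $I^{\alpha}_+$ on the relevant $L^q$ spaces (Theorem 5.3 of \cite{SKM93}) used exactly as in the proof of Proposition \ref{ExGrad1}; everything else is a routine unwinding of the $S$-transform rules already recorded above.
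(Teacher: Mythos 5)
Your proof is correct and follows essentially the same route as the paper: compute the $S$-transform of both sides, reduce everything to the identity $\int_a^b S(\Phi_t)(\xi)\,I^{\alpha}_+(\xi)(t)\,dt$, and conclude by injectivity of the $S$-transform. The only cosmetic difference is that you evaluate the left-hand side through the factorization $(\nabla^{\alpha})^*=\nabla^*\circ\big(I^{\alpha}_{-}(\mathbb{I}_{(a,b)}\cdot)\otimes E\big)$ of Proposition \ref{ExAdGrad1} together with a Fubini argument, whereas the paper pairs $(\nabla^{\alpha})^*(\Phi)$ directly with the exponential vector $:\!e^{\langle\cdot;\xi\rangle}\!:$ and uses $\nabla^{\alpha}(:\!e^{\langle\cdot;\xi\rangle}\!:)=I^{\alpha}_+(\xi)\,:\!e^{\langle\cdot;\xi\rangle}\!:$; both computations land on the same expression.
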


\begin{proof}
 Let $\xi\in S(\mathbb{R})$. $(\nabla^{\alpha})^*(\Phi)\in (L^2)\subset (\mathcal{W}^{1,2})^*$ and $:e^{<;\xi>}:$ is in $(S)\subset (\mathcal{W}^{s,2})$, for any $s\geq0$. Thus, we have:
\begin{align*}
S((\nabla^{\alpha}(\Phi))^*)(\xi)&=<(\nabla^{\alpha}(\Phi))^*,:e^{<;\xi>}:>_{(L^2)},\\
&=<(\nabla^{\alpha}(\Phi))^*,:e^{<;\xi>}:>_{((\mathcal{W}^{1,2})^*,(\mathcal{W}^{1,2}))},\\
&=<\Phi,\nabla^{\alpha}(:e^{<;\xi>}:)>_{L^2((a,b))\otimes(L^2)},\\
&=<\Phi, I^{\alpha}_{+}(\xi):e^{<;\xi>}:>_{L^2((a,b))\otimes(L^2)},\\
&=\int_a^bI^{\alpha}_{+}(\xi)(t)<\Phi_t, :e^{<;\xi>}:>_{(L^2)}dt,\\
&=\int_a^bS(\Phi_t)(\xi)I^{\alpha}_{+}(\xi)(t)dt,\\
&=S\bigg(\int_a^bD^*_{\delta_t\circ I^{\alpha}_{+}}(\Phi_t)dt\bigg)(\xi).
\end{align*}
\end{proof}

%%%%%%%%% Partie III de l'intégrale Forward à l'intégrale de Wick-Itô %%%%%%%%%%%%%%%%%%%%%%%%%%%%%%%%%%%%%%%%%%%%%%%%%%%%%%%%%%%%%%%%%

\section{From forward integrals to Wick-Itô integrals.}
%In this section, we want to make an explicit connection between forward integrals with respect to fractional Brownian motion and with respect to the Rosenblatt process, built using Russo-Vallois theory of regularisation (\cite{RV93}), and the Wick-Itô integrals introduced in Proposition $2.7$. For this purpose, we adopt the following strategy. First of all, for a certain class of stochastic integrands denoted by $\{\Phi_t;\ t\in(a,b)\}$, we compute the $(S)$-transforms of $\Phi_t(B^H_{t+\epsilon}-B^H_t)$ and of $\Phi_t(X^H_{t+\epsilon}-X^H_t)$ for every $\epsilon>0$ and $t\in(a,b)$. Then, we prove the $(S)^*$-integrability of the two former terms on $(a,b)$. Finally, for peculiar cases, we examine the convergence in $(S)^*$ of $\int_a^b\Phi_t(B^H_{t+\epsilon}-B^H_t)dt/\epsilon$ and of $\int_a^b\Phi_t(X^H_{t+\epsilon}-X^H_t)dt/\epsilon$.

%%%%%%%%% Partie III 1 fbm %%%%%%%%%%%%%%%%%%%%%%%%%%%%%%%%%%%%%%%%%%%%%%%%%%%%%%%%%%%%%%%%%%%%%%%%%%%%%%%%%%%%%%

\subsection{Fractional Brownian motion.}
\noindent
\begin{prop}\label{Decfbm1}
Let $\{\Phi_t: t\in(a,b)\}$ be a stochastic process such that for all $t\in (a,b)$, $\Phi_t\in (\mathcal{W}^{1,2})$. Then, we have, for every $\epsilon>0$ and $t\in(a,b)$:
\begin{align*}
\Phi_t\dfrac{B^H_{t+\epsilon}-B^H_t}{\epsilon}\overset{(S)^*}{=}\Phi_t\diamond\dfrac{B^{H}_{t+\epsilon}-B^H_t}{\epsilon}+A(H)\int_t^{t+\epsilon}\nabla^{H-\frac{1}{2}}(\Phi_t)(s,)\frac{ds}{\epsilon}.
\end{align*}
\end{prop}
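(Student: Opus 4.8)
The plan is to compute the $S$-transform of both sides and invoke the injectivity of the $S$-transform (Theorem \ref{STrans}, part 1). The left-hand side $\Phi_t(B^H_{t+\epsilon}-B^H_t)/\epsilon$ is an ordinary pointwise product of an element of $(\mathcal{W}^{1,2})\subset (S)^*$ with a Gaussian random variable, hence lies in $(S)^*$ and has a well-defined $S$-transform. The basic identity I would exploit is the relation between the pointwise product $\Phi_t \cdot Y$ and the Wick product $\Phi_t\diamond Y$ when $Y$ is a first-chaos element: for $Y=I_1(h)$ one has, heuristically, $\Phi_t\cdot Y=\Phi_t\diamond Y + \langle h;\nabla(\Phi_t)\rangle$ — this is exactly the first-order Gaussian integration-by-parts/Wick-renormalisation formula. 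Here $B^H_{t+\epsilon}-B^H_t = A(H)\,I_1\!\big(\int_t^{t+\epsilon}\delta_s\circ I^{H-\frac12}_+\,ds\big)$ is such a first-chaos element (read off from the definition of $B^H$), so the correction term should be $A(H)\int_t^{t+\epsilon}\langle \delta_s\circ I^{H-\frac12}_+;\nabla(\Phi_t)\rangle\,ds$, which by Proposition \ref{ExGrad1} is precisely $A(H)\int_t^{t+\epsilon}\nabla^{H-\frac12}(\Phi_t)(s,\cdot)\,ds$.

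Concretely I would proceed as follows. First, fix $\xi\in S(\mathbb{R})$ and compute $S\big(\Phi_t\cdot (B^H_{t+\epsilon}-B^H_t)\big)(\xi)=\mathbb{E}^{\mu_\xi}[\Phi_t(B^H_{t+\epsilon}-B^H_t)]$ using the translated-measure description of the $S$-transform from the Remark after the definition. Under $\mu_\xi$ the random variable $B^H_{t+\epsilon}-B^H_t=\langle\,\cdot\,;A(H)\int_t^{t+\epsilon}\delta_s\circ I^{H-\frac12}_+\,ds\rangle$ has mean $A(H)\int_t^{t+\epsilon}I^{H-\frac12}_+(\xi)(s)\,ds = S(B^H_{t+\epsilon}-B^H_t)(\xi)$. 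Writing $B^H_{t+\epsilon}-B^H_t = \big(S(B^H_{t+\epsilon}-B^H_t)(\xi)\big) + \widetilde{Y}$ where $\widetilde Y$ is centred under $\mu_\xi$, the product splits into two pieces: $S(B^H_{t+\epsilon}-B^H_t)(\xi)\cdot S(\Phi_t)(\xi)$, which is $S\big(\Phi_t\diamond\frac{B^H_{t+\epsilon}-B^H_t}{\epsilon}\big)(\xi)$ after dividing by $\epsilon$ and using the Wick-product characterisation (Theorem \ref{STrans}, part 3); and the cross term $\mathbb{E}^{\mu_\xi}[\Phi_t\,\widetilde Y]$, which by Gaussian integration by parts (the Cameron–Martin/differentiation-of-a-shift identity for $\mu_\xi$, equivalently Theorem \ref{DevGF} applied to the expansion $\Phi_t=\sum_n I_n(\phi_n)$) equals $A(H)\int_t^{t+\epsilon}\langle \delta_s\circ I^{H-\frac12}_+;\xi^{\otimes}\text{-contracted against }\nabla(\Phi_t)\rangle\,ds$; summed over the chaos expansion this is $A(H)\int_t^{t+\epsilon} S\big(\nabla^{H-\frac12}(\Phi_t)(s,\cdot)\big)(\xi)\,ds$. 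An alternative, perhaps cleaner, route is to expand $\Phi_t=\sum_n I_n(\phi_n)$ and use the product formula for multiple integrals $I_n(\phi_n)I_1(h)=I_{n+1}(\phi_n\hat\otimes h)+nI_{n-1}(\phi_n\otimes_1 h)$ term by term, recognising the first summand (after dividing by $\epsilon$) as $\Phi_t\diamond\frac{B^H_{t+\epsilon}-B^H_t}{\epsilon}$ and the second as the gradient term via Proposition \ref{ExGrad1}. I would then interchange $S$-transform and the $ds$-integral (justified by the bound in Lemma \ref{ConI1} / Theorem \ref{ConOp} ensuring $s\mapsto \nabla^{H-\frac12}(\Phi_t)(s,\cdot)$ is in $L^2((a,b))\otimes(\mathcal W^{0,2})$, so Fubini applies), and conclude equality of the $S$-transforms of both sides, hence equality in $(S)^*$.

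The main obstacle is not conceptual but one of careful bookkeeping: one must verify that every term genuinely lies in $(S)^*$ (so that the $S$-transform is legitimate and injectivity applies) and justify the Fubini-type interchange $S\big(\int_t^{t+\epsilon}(\cdots)\,ds\big)=\int_t^{t+\epsilon}S(\cdots)\,ds$. For this, the hypothesis $\Phi_t\in(\mathcal W^{1,2})$ is exactly what is needed: by Proposition \ref{ExGrad1}, $\nabla^{H-\frac12}$ maps $(\mathcal W^{1,2})$ continuously into $L^2((a,b))\otimes(\mathcal W^{0,2})=L^2((a,b))\otimes(L^2)$, so $s\mapsto\nabla^{H-\frac12}(\Phi_t)(s,\cdot)$ is an $L^2$-valued (hence $(S)^*$-valued) function integrable over $(t,t+\epsilon)\cap(a,b)$, and the $S$-transform, being continuous on $(S)^*$, commutes with the Bochner integral. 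The term $\Phi_t\diamond\frac{B^H_{t+\epsilon}-B^H_t}{\epsilon}$ is well-defined in $(S)^*$ by part 3 of Theorem \ref{STrans} since $\Phi_t\in(L^2)\subset(S)^*$ and $B^H_{t+\epsilon}-B^H_t\in(L^2)\subset(S)^*$. Once these technical points are in place, injectivity of the $S$-transform closes the argument.
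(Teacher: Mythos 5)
Your proposal is correct and matches the paper's argument: the paper proves the identity precisely via your ``alternative'' route --- chaos expansion of $\Phi_t$, the multiplication formula $I_n(\phi_n)I_1(g^H_{t,t+\epsilon})=I_{n+1}(\phi_n\hat{\otimes} g^H_{t,t+\epsilon})+nI_{n-1}(\phi_n\otimes_1 g^H_{t,t+\epsilon})$, identification of the two resulting series through their $S$-transforms (the first as the Wick term, the second as the gradient term via Proposition \ref{ExGrad1}), and an interchange of the sum with the $ds$-integral justified by $\|\nabla^{H-\frac{1}{2}}(\Phi_t)\|_{L^2((a,b))\otimes(L^2)}<+\infty$, i.e.\ exactly by the hypothesis $\Phi_t\in(\mathcal{W}^{1,2})$. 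Your primary translated-measure/Gaussian integration-by-parts variant is an equivalent reformulation of the same computation, and you correctly single out the Fubini-type interchange controlled by the $(\mathcal{W}^{1,2})$ assumption as the only real technical point, which is where the paper's proof spends its effort.
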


\begin{proof}
Let $\epsilon>0$ be small enough such that $t+\epsilon\in(a,b)$. Since $\Phi_t\in(\mathcal{W}^{1,2})\subset(L^2)$, $\Phi_t=\sum_{n=0}^{\infty}I_n(\phi_n(,t))$ with $\phi_n(,t)\in \hat{L}^2(\mathbb{R}^n)$. Let us fix $n\geq 1$. Using the multiplication formula from Malliavin calculus (see Proposition $1.1.3$ of \cite{N06}), we obtain:
\begin{align*}
I_n(\phi_n(,t))(B^H_{t+\epsilon}-B^H_t)&=A(H)I_n(\phi_n(,t))I_1(\int_t^{t+\epsilon}\dfrac{(s-.)^{H-\frac{3}{2}}_{+}}{\Gamma(H-\frac{1}{2})}ds),\\
&=I_{n+1}(\phi_n(,t)\otimes g^H_{t,t+\epsilon})+nI_{n-1}(\phi_n(,t)\otimes_1 g^H_{t,t+\epsilon}),
\end{align*}
where $g^H_{t,t+\epsilon}(.)=A(H)/\Gamma(H-1/2)\int_t^{t+\epsilon}(s-.)^{H-3/2}_{+}ds$. Then, for any $\xi\in S(\mathbb{R})$, we have:
\begin{align*}
S(I_n(\phi_n(,t))(B^H_{t+\epsilon}-B^H_t))(\xi)=<\phi_n(,t);\xi^{\otimes n}><g^H_{t,t+\epsilon};\xi>+n<\phi_n(,t)\otimes_1 g^H_{t,t+\epsilon};\xi^{\otimes n-1}>.
\end{align*}
By continuity of the scalar product in $(L^2)$, we have:
\begin{align*}
S(\Phi_t(B^H_{t+\epsilon}-B^H_t))(\xi)=\sum_{n=0}^{\infty}\big(<\phi_n(,t);\xi^{\otimes n}><g^H_{t,t+\epsilon};\xi>+n<\phi_n(,t)\otimes_1 g^H_{t,t+\epsilon};\xi^{\otimes n-1}>\big).
\end{align*}
We want to compute separately the two infinite series appearing in the right hand side of the previous equality. First of all, we have:
\begin{align*}
\sum_{n=0}^{\infty}<\phi_n(,t);\xi^{\otimes n}><g^H_{t,t+\epsilon};\xi>&=S(\Phi_t)(\xi)S(B^H_{t+\epsilon}-B^H_t)(\xi),\\
&=S(\Phi_t\diamond (B^H_{t+\epsilon}-B^H_t))(\xi).
\end{align*}
Moreover, for the second term, we have, by Fubini theorem:
\begin{align*}
n<\phi_n(,t)\otimes_1 g^H_{t,t+\epsilon};\xi^{\otimes n-1}>&=A(H)n\int_{t}^{t+\epsilon}<I^{(0,...,0,H-\frac{1}{2})}_{+,...,+}(\phi_n(,t))(,s);\xi^{\otimes n-1}>ds,\\
&=A(H)\int_{t}^{t+\epsilon}S\big(nI_{n-1}(I^{(0,...,0,H-\frac{1}{2})}_{+,...,+}(\phi_n(,t))(,s))\big)(\xi)ds,\\
&=A(H)\int_{t}^{t+\epsilon}S(\nabla^{H-\frac{1}{2}}(I_n(\phi_n(,t)))(s,))(\xi)ds.
\end{align*}
We want to invert the infinite series and the integral over $(t,t+\epsilon)$. For any $n\geq 1$ and $t\in (a,b)$, we have:
\begin{align*}
|\int_{t}^{t+\epsilon}<I^{(0,...,0,H-\frac{1}{2})}_{+,...,+}(\phi_n(,t))(,s);\xi^{\otimes n-1}>ds|&\leq \int_{t}^{t+\epsilon}|<I^{(0,...,0,H-\frac{1}{2})}_{+,...,+}(\phi_n(,t))(,s);\xi^{\otimes n-1}>|ds,\\
&\leq \|\xi\|^{n-1}_{L^2(\mathbb{R})}\int_{t}^{t+\epsilon}\|I^{(0,...,0,H-\frac{1}{2})}_{+,...,+}(\phi_n(,t))(,s)\|_{L^2(\mathbb{R}^{n-1})}ds,\\
&\leq \|\xi\|^{n-1}_{L^2(\mathbb{R})}\int_{a}^{b}\|I^{(0,...,0,H-\frac{1}{2})}_{+,...,+}(\phi_n(,t))(,s)\|_{L^2(\mathbb{R}^{n-1})}ds<+\infty,
\end{align*}
since $I^{(0,...,0,H-1/2)}_{+,...,+}(\phi_n(,t))\in L^{(2,...,2,1/(1-H))}(\mathbb{R}^{n})$ and $1/(1-H)>2$ (see Theorem $24.1$ of \cite{SKM93}). Finally, we note that, using Cauchy-Schwarz and Jensen inequalities and Proposition \ref{ExGrad1}:
\begin{align*}
\sum_{n=1}^{+\infty}\bigg[n\|\xi\|^{n-1}_{L^2(\mathbb{R})}\int_{a}^{b}\|I^{(0,...,0,H-\frac{1}{2})}_{+,...,+}(\phi_n(,t))(,s)\|_{L^2(\mathbb{R}^{n-1})}ds\bigg]&\leq
\exp(\frac{1}{2}\|\xi\|^{2}_{L^2(\mathbb{R})})\Bigg(\sum_{n=1}^{\infty}n!n\Big(\int_a^b\\
&\|I_{+,...,+}^{(0,...,0,H-\frac{1}{2})}(\phi_n(,t))(.,s)\|_{L^2(\mathbb{R}^{n-1})}ds\Big)^2\Bigg)^{\frac{1}{2}},\\
\sum_{n=1}^{+\infty}\Bigg[n\bigg(\int_a^b\|I_{+,...,+}^{(0,...,0,H-\frac{1}{2})}(\phi_n(,t))(.,s)\|_{L^2(\mathbb{R}^{n-1})}ds\bigg)\|\xi\|^{n-1}_{L^2(\mathbb{R})}\Bigg]&\leq (b-a)^{\frac{1}{2}}\exp(\frac{1}{2}\|\xi\|^{2}_{L^2(\mathbb{R})})\times\\
&\Bigg(\sum_{n=1}^{\infty}n!n\int_a^b\|I_{+,...,+}^{(0,...,0,H-\frac{1}{2})}(\phi_n(,t))(.,s)\|^2_{L^2(\mathbb{R}^{n-1})}ds\Bigg)^{\frac{1}{2}},\\
\sum_{n=1}^{+\infty}\Bigg[n\bigg(\int_a^b\|I_{+,...,+}^{(0,...,0,H-\frac{1}{2})}(\phi_n(,t))(.,s)\|_{L^2(\mathbb{R}^{n-1})}ds\bigg)\|\xi\|^{n-1}_{L^2(\mathbb{R})}\Bigg]&\leq (b-a)^{\frac{1}{2}}\exp(\frac{1}{2}\|\xi\|^{2}_{L^2(\mathbb{R})})\|\nabla^{H-\frac{1}{2}}(\Phi_t)\|_{L^2((a,b))\otimes (L^2)},
\end{align*}
which is finite since $\Phi_t\in(\mathcal{W}^{1,2})$. Thus, we have:
\begin{align*}
\sum_{n=1}^{+\infty}n<\phi_n(,t)\otimes_1 g^H_{t,t+\epsilon};\xi^{\otimes n-1}>&=A(H)\int_{t}^{t+\epsilon}\sum_{n=1}^{+\infty}\big[S\big(nI_{n-1}(I^{(0,...,0,H-\frac{1}{2})}_{+,...,+}(\phi_n(,t))(,s))\big)(\xi)\big]ds,\\
&=A(H)\int_{t}^{t+\epsilon}S(\nabla^{H-\frac{1}{2}}(\Phi_t)(s,))(\xi)ds,\\
&=S(A(H)\int_{t}^{t+\epsilon}\nabla^{H-\frac{1}{2}}(\Phi_t)(s,)ds)(\xi),\\
\end{align*}
where we use the square-integrability of $\|\nabla^{H-\frac{1}{2}}(\Phi_t)(s,)\|_{(L^2)}$ on $(a,b)$ to justify the last equality. This concludes the proof.
\end{proof}

\begin{prop}\label{Decfbm2}
Let $\{\Phi_t: t\in(a,b)\}$ be a stochastic process such that for all $t\in (a,b)$, $\Phi_t\in (\mathcal{W}^{1,2})$. Moreover, assume that:
\begin{align*}
&\int_a^b\|\Phi_t\|^2_{(L^2)}dt<+\infty,\\
&\int_a^b\|\nabla^{H-\frac{1}{2}}(\Phi_t)\|^2_{L^2(a,b)\otimes (L^2)}dt<+\infty.
\end{align*}
Then, we have:
\begin{align*}
\int_a^b\Phi_t\dfrac{B^H_{t+\epsilon}-B^H_t}{\epsilon}dt\overset{(S)^*}{=}\int_a^b\Phi_t\diamond\dfrac{B^{H}_{t+\epsilon}-B^H_t}{\epsilon}dt+A(H)\int_a^b\int_t^{t+\epsilon}\nabla^{H-\frac{1}{2}}(\Phi_t)(s,)\frac{ds}{\epsilon}dt.
\end{align*}
\end{prop}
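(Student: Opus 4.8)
The plan is to integrate the pointwise identity of Proposition \ref{Decfbm1} over $t\in(a,b)$ and justify that the integration commutes with the $S$-transform and with the various linear operations involved. Concretely, for a fixed $\epsilon>0$ small enough that $t+\epsilon\in(a,b)$ for all $t$ in the range (or more carefully, after replacing $t+\epsilon$ by $(t+\epsilon)\wedge b$ as in the definition of the forward integral, though here the statement is written with $t+\epsilon$ so I assume $b$ is adjusted or $\epsilon$ small), Proposition \ref{Decfbm1} gives, for each $t$,
\begin{align*}
S\Big(\Phi_t\dfrac{B^H_{t+\epsilon}-B^H_t}{\epsilon}\Big)(\xi)=S\Big(\Phi_t\diamond\dfrac{B^{H}_{t+\epsilon}-B^H_t}{\epsilon}\Big)(\xi)+A(H)\int_t^{t+\epsilon}S\big(\nabla^{H-\frac{1}{2}}(\Phi_t)(s,\cdot)\big)(\xi)\frac{ds}{\epsilon}.
\end{align*}
The goal is to integrate this in $t$ and pull the integral inside $S$ on each of the three terms, using the characterization of $(S)^*$-integrability and the $S$-transform criterion (Theorem \ref{STrans}, item 4, and the $(S)^*$-integration theory already invoked in the preceding Proposition on noise integrals).

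First I would treat the left-hand side: I must check that $t\mapsto \Phi_t(B^H_{t+\epsilon}-B^H_t)/\epsilon$ is $(S)^*$-integrable (indeed $(L^2)$-integrable) on $(a,b)$. Using Cauchy--Schwarz in $(L^2)$, $\|\Phi_t(B^H_{t+\epsilon}-B^H_t)\|_{(L^2)}\le \|\Phi_t\|_{(L^4)}\|B^H_{t+\epsilon}-B^H_t\|_{(L^4)}$; this is not immediately controlled by the hypotheses, so instead I would work directly at the level of $S$-transforms: bound $|S(\Phi_t(B^H_{t+\epsilon}-B^H_t))(\xi)|$ by the sum of the $S$-transform bounds of the two right-hand terms, each of which I control below, and invoke measurability in $t$ of $S(\Phi_\cdot)(\xi)$ (which follows from the measurability hypothesis one would carry, or from $\Phi\in L^2((a,b))\otimes(L^2)$). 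Then I would handle the middle term: $t\mapsto \Phi_t\diamond (B^H_{t+\epsilon}-B^H_t)/\epsilon$ has $S$-transform $S(\Phi_t)(\xi)\cdot S(B^H_{t+\epsilon}-B^H_t)(\xi)/\epsilon$; since $|S(B^H_{t+\epsilon}-B^H_t)(\xi)|=A(H)|\int_t^{t+\epsilon}I^{H-1/2}_+(\xi)(s)ds|$ is bounded uniformly in $t$ by $\epsilon\,C\exp(a\|A^p\xi\|_2^2)$ times a constant, and $\int_a^b|S(\Phi_t)(\xi)|dt\le (b-a)^{1/2}(\int_a^b\|\Phi_t\|_{(L^2)}^2dt)^{1/2}\exp(\tfrac12\|\xi\|_2^2)$ is finite by the first hypothesis, this term is $(S)^*$-integrable and the integral commutes with $S$.

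The third term is the one requiring the second hypothesis. For fixed $\epsilon$, I would show $t\mapsto \int_t^{t+\epsilon}\nabla^{H-1/2}(\Phi_t)(s,\cdot)\,ds$ is $(S)^*$-integrable on $(a,b)$. Its $S$-transform is $\int_t^{t+\epsilon}S(\nabla^{H-1/2}(\Phi_t)(s,\cdot))(\xi)\,ds$, and since for each $t$ the map $s\mapsto \nabla^{H-1/2}(\Phi_t)(s,\cdot)$ lies in $L^2((a,b))\otimes(L^2)$ by Proposition \ref{ExGrad1}, one has $|S(\nabla^{H-1/2}(\Phi_t)(s,\cdot))(\xi)|\le \|\nabla^{H-1/2}(\Phi_t)(s,\cdot)\|_{(L^2)}\exp(\tfrac12\|\xi\|_2^2)$. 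Integrating in $s$ over $(t,t+\epsilon)$ and then in $t$ over $(a,b)$, and applying Cauchy--Schwarz twice, I get the bound
\begin{align*}
\int_a^b\Big|\int_t^{t+\epsilon}S(\nabla^{H-\tfrac12}(\Phi_t)(s,\cdot))(\xi)\,ds\Big|dt\le \epsilon^{1/2}(b-a)^{1/2}\Big(\int_a^b\|\nabla^{H-\tfrac12}(\Phi_t)\|_{L^2(a,b)\otimes(L^2)}^2dt\Big)^{1/2}\exp(\tfrac12\|\xi\|_2^2),
\end{align*}
which is finite by the second hypothesis; this simultaneously establishes $(S)^*$-integrability and provides the uniform exponential bound in $\xi$ needed to commute integral and $S$-transform (by the $(S)^*$-integration criterion, e.g.\ as used in the noise-integral proposition). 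With all three terms shown to be $(S)^*$-integrable and their $t$-integrals' $S$-transforms identified with the integrals of the $S$-transforms, the identity follows by injectivity of the $S$-transform (Theorem \ref{STrans}, item 1), after dividing through by $\epsilon$ where appropriate. The main obstacle is the bookkeeping in the third term: one must be careful that the inner variable $s$ ranges over $(t,t+\epsilon)\subset(a,b)$ (so the gradient $\nabla^{H-1/2}$, defined as an operator into $L^2((a,b))\otimes(\mathcal{W}^{r-1,2})$, is being evaluated on the correct domain) and that the double integral in $(s,t)$ is absolutely convergent, which is exactly what the two displayed hypotheses guarantee; everything else is a routine application of Fubini together with the $S$-transform characterization already set up in the preceding sections.
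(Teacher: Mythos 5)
Your proposal is correct and follows essentially the same route as the paper: it establishes the $(S)^*$-integrability over $(a,b)$ of the two right-hand terms of Proposition \ref{Decfbm1} (and hence of the left-hand side) by bounding their $S$-transforms by $L(t)\exp(a\|A^p\xi\|_{L^2(\mathbb{R})}^2)$ with $L\in L^1((a,b))$, using Cauchy--Schwarz and the two displayed hypotheses exactly as the paper does. The minor differences (your explicit $\epsilon^{1/2}$ factor and the remark on measurability in $t$) are cosmetic and do not change the argument.
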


\begin{proof}
In order to prove the $(S)^*$-integrability of $\Phi_t \frac{B^{H}_{t+\epsilon}-B^H_t}{\epsilon}$ over $(a,b)$, we are going to prove the $(S)^*$-integrability of the two terms appearing in Proposition \ref{Decfbm1}. Let us start with the first term. For every $\xi\in S(\mathbb{R})$, we have:
\begin{align*}
S(\Phi_t\diamond\dfrac{B^{H}_{t+\epsilon}-B^H_t}{\epsilon})(\xi)&=S(\Phi_t)(\xi)S(\dfrac{B^{H}_{t+\epsilon}-B^H_t}{\epsilon})(\xi),\\
&=\mathbb{E}^{\mu}[\Phi_t\exp(<,\xi>-\frac{\|\xi\|_{L^2(\mathbb{R})}^2}{2})]A(H)\int_t^{t+\epsilon}I_+^{H-\frac{1}{2}}(\xi)(s)\dfrac{ds}{\epsilon}.
\end{align*}
By Cauchy-Schwarz inequality and $\|I_+^{H-1/2}(\xi)\|_{\infty}\leq C_H(\|\xi\|_{\infty}+\|\xi^{(1)}\|_{\infty}+\|\xi\|_{L^1(\mathbb{R})})$, we have:
\begin{align*}
|S(\Phi_t\diamond\dfrac{B^{H}_{t+\epsilon}-B^H_t}{\epsilon})(\xi)|&\leq C'_H(\|\xi\|_{\infty}+\|\xi^{(1)}\|_{\infty}+\|\xi\|_{L^1(\mathbb{R})})\exp(\frac{1}{2}\|\xi\|^2_{L^2(\mathbb{R})}) \mathbb{E}^{\mu}[\Phi_t^2]^{\frac{1}{2}}.\\
\end{align*}
Using the fact that $\int_a^b\|\Phi_t\|^2_{(L^2)}dt<+\infty$, we have the $(S)^*$-integrability of the first term. Let us deal with the second term now. For every $\xi\in S(\mathbb{R})$, we have:
\begin{align*}
|S(\int_{t}^{t+\epsilon}\nabla^{H-\frac{1}{2}}(\Phi_t)(s,)ds)(\xi)|&\leq \int_t^{t+\epsilon}|S(\nabla^{H-\frac{1}{2}}(\Phi_t)(s,))(\xi)|ds,\\
&\leq \exp(\frac{1}{2}\|\xi\|^2_{L^2(\mathbb{R})})\int_t^{t+\epsilon}\|\nabla^{H-\frac{1}{2}}(\Phi_t)(s,)\|_{(L^2)}ds,\\
&\leq \exp(\frac{1}{2}\|\xi\|^2_{L^2(\mathbb{R})})\int_a^{b}\|\nabla^{H-\frac{1}{2}}(\Phi_t)(s,)\|_{(L^2)}ds,\\
&\leq (b-a)^{\frac{1}{2}}\exp(\frac{1}{2}\|\xi\|^2_{L^2(\mathbb{R})})\|\nabla^{H-\frac{1}{2}}(\Phi_t)\|_{L^2((a,b))\otimes(L^2)}
\end{align*}
Using the fact that $\int_a^b\|\nabla^{H-\frac{1}{2}}(\Phi_t)\|^2_{L^2((a,b))\otimes(L^2)}dt<+\infty$, we have the $(S)^*$-integrability of the second term. This ends the proof. 
\end{proof}
\noindent
The next proposition examines the chain rule property for certain functionals of fractional Brownian motion.

\begin{prop}\label{Gradfbm}
Let $F$ be in $C^1(\mathbb{R})$. Assume that, for every $t\in (a,b)$, $F(B^H_t)\in(\mathcal{W}^{1,2})$. Then, we have, in $L^2((a,b))\otimes(L^2)$:
\begin{align*}
\nabla^{H-\frac{1}{2}}(F(B^H_t))(s,\omega)=\dfrac{A(H)\beta(H-\frac{1}{2},2-2H)}{(\Gamma(H-\frac{1}{2}))^2}\big(\int_0^t|s-r|^{2H-2}dr\big)F'(B^H_t)(\omega).
\end{align*}
\end{prop}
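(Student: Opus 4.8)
The plan is to combine the chain rule for $\nabla^{H-\frac{1}{2}}$ (obtained by composing the chain rule for $\nabla$ with the fractional integration operator) with the explicit formula for $\nabla(F(B^H_t))$ coming from Proposition \ref{ProdChain}, and then to evaluate the resulting fractional integral of the kernel defining $B^H$. First I would use the density of nice functionals: for $F\in C^1(\mathbb{R})$ with $F(B^H_t)\in(\mathcal{W}^{1,2})$, the identity $\nabla^{H-\frac{1}{2}}=(I^{H-\frac{1}{2}}_+\otimes E)\circ\nabla$ from Proposition \ref{ExGrad1} gives, at the level of Gâteaux derivatives, $\nabla^{H-\frac{1}{2}}(F(B^H_t))(s,\omega)=I^{H-\frac{1}{2}}_+\big(\nabla(F(B^H_t))(\cdot,\omega)\big)(s)$. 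Applying the chain rule $\nabla(F(B^H_t))=\nabla(B^H_t)F'(B^H_t)$, and recalling from the definition of $B^H_t$ as a Wiener integral on $\mathbb{R}$ that $\nabla(B^H_t)(\cdot)=A(H)\int_0^t\frac{(u-\cdot)^{H-\frac{3}{2}}_+}{\Gamma(H-\frac{1}{2})}du$, we get
\begin{align*}
\nabla^{H-\frac{1}{2}}(F(B^H_t))(s,\omega)=\Big(I^{H-\frac{1}{2}}_+\big(A(H)\int_0^t\tfrac{(u-\cdot)^{H-\frac{3}{2}}_+}{\Gamma(H-\frac{1}{2})}du\big)\Big)(s)\,F'(B^H_t)(\omega),
\end{align*}
so the whole statement reduces to a deterministic computation of the kernel.

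Next I would carry out that deterministic computation. Interchanging the fractional integral $I^{H-\frac{1}{2}}_+$ (in the variable that becomes $s$) with the $du$-integral over $(0,t)$ by Fubini, the task is to evaluate $I^{H-\frac{1}{2}}_+\big((u-\cdot)^{H-\frac{3}{2}}_+\big)(s)$ for fixed $u>0$. Writing this out, $\frac{1}{\Gamma(H-\frac{1}{2})}\int_{\mathbb{R}}(s-r)^{H-\frac{3}{2}}_+(u-r)^{H-\frac{3}{2}}_+\,dr$, which is a Beta-type integral: substituting $r=s-(s-u)w$ (or the analogous change of variables depending on the sign of $s-u$) reduces it to $\int_0^1 w^{H-\frac{3}{2}}(1-w)^{\cdots}dw$ and produces $|s-u|^{2H-2}$ times the Beta factor $\beta(H-\frac{1}{2},2-2H)$, the exponents being consistent since $(H-\frac{3}{2})+(H-\frac{3}{2})+1=2H-2$. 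Collecting the constants $A(H)/(\Gamma(H-\frac{1}{2}))^2$ and the Beta factor, and reinstating the $du$-integral, yields exactly $\frac{A(H)\beta(H-\frac{1}{2},2-2H)}{(\Gamma(H-\frac{1}{2}))^2}\big(\int_0^t|s-u|^{2H-2}du\big)F'(B^H_t)$.

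Finally I would upgrade the pointwise/Gâteaux identity to an equality in $L^2((a,b))\otimes(L^2)$: since $F(B^H_t)\in(\mathcal{W}^{1,2})$, Proposition \ref{ExGrad1} guarantees $\nabla^{H-\frac{1}{2}}(F(B^H_t))$ is a genuine element of $L^2((a,b))\otimes(L^2)$, and approximating $F(B^H_t)$ by elements of $(S)$ (as in the proof of Proposition \ref{ExGrad1}) lets me pass to the limit on both sides; the right-hand side is manifestly in $L^2((a,b))\otimes(L^2)$ once one checks that $s\mapsto\int_0^t|s-u|^{2H-2}du$ is in $L^2((a,b))$, which holds because $2H-2\in(-1,0)$ makes the inner integral locally bounded (indeed continuous) on $(a,b)$. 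The main obstacle I anticipate is purely technical: justifying the Fubini interchange and the change of variables in the singular Beta integral near the coincidence set $r=s$, $r=u$, and handling the case distinction $s<u$ versus $s>u$ in the kernel $|s-u|^{2H-2}$ carefully so that the constant comes out as the stated $\beta(H-\frac{1}{2},2-2H)$ rather than a sum of two incomplete Beta pieces; this is exactly the kind of computation that appears in the Marchaud/Weyl fractional calculus identities in \cite{SKM93}, so I would lean on those.
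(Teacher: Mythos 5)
Your deterministic computation of the kernel is correct and matches the constant in the statement: indeed
\begin{align*}
I^{H-\frac{1}{2}}_+\big(g^H_{0,t}\big)(s)=\dfrac{A(H)}{\Gamma(H-\frac{1}{2})^2}\int_0^t\Big(\int_{\mathbb{R}}(s-x)^{H-\frac{3}{2}}_+(u-x)^{H-\frac{3}{2}}_+dx\Big)du
=\dfrac{A(H)\beta(H-\frac{1}{2},2-2H)}{(\Gamma(H-\frac{1}{2}))^2}\int_0^t|s-u|^{2H-2}du,
\end{align*}
and the continuity of $I^{H-\frac{1}{2}}_+\otimes E$ from Lemma \ref{ConI1} correctly handles the passage to $L^2((a,b))\otimes(L^2)$. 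The gap is in the first step: you invoke the chain rule $\nabla(F(B^H_t))=\nabla(B^H_t)F'(B^H_t)$ via Proposition \ref{ProdChain}, but that proposition requires $F$ to be infinitely differentiable with polynomial growth and the argument to lie in $(\mathcal{W}^{\infty,\infty})$. Here the hypotheses are only $F\in C^1(\mathbb{R})$ and $F(B^H_t)\in(\mathcal{W}^{1,2})$ --- no growth control on $F'$, no higher smoothness --- so the cited chain rule simply does not apply, and in fact you never even establish that $F'(B^H_t)\in(L^2)$, which is needed for the right-hand side to make sense. Establishing precisely this chain-rule identity under the weak hypotheses is the actual content of the proposition.

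The paper closes this gap by a Hermite-expansion argument: writing $F=\sum_n c_{n,t^{2H}}H_{n,t^{2H}}/(\sqrt{n!}t^{Hn})$ in $L^2(\mathbb{R},\gamma_{t^{2H}})$, so that $F(B^H_t)=\sum_n c_{n,t^{2H}}I_n((g^H_{0,t})^{\otimes n})/(\sqrt{n!}t^{Hn})$, an integration by parts on $\mathbb{R}$ shows that the Hermite coefficients of $F'$ satisfy $d_{n,t^{2H}}=\sqrt{n+1}\,c_{n+1,t^{2H}}/t^{H}$; the assumption $F(B^H_t)\in(\mathcal{W}^{1,2})$, i.e. $\mathbb{E}[((N+E)^{1/2}F(B^H_t))^2]<\infty$, is exactly what makes the shifted series square-summable, hence $F'\in L^2(\mathbb{R},\gamma_{t^{2H}})$ and $F'(B^H_t)\in(L^2)$. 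Then the chaos-by-chaos formula of Proposition \ref{ExGrad1}, applied to the kernels $\phi_n=c_{n,t^{2H}}(g^H_{0,t})^{\otimes n}/(\sqrt{n!}t^{Hn})$, factors as $I^{H-\frac{1}{2}}_+(g^H_{0,t})(s)$ times the expansion of $F'(B^H_t)$, which yields the chain rule and the stated identity simultaneously. To repair your proof you would either have to reproduce such an expansion argument, or prove a chain rule for merely $C^1$ functions $F$ with $F(B^H_t)\in(\mathcal{W}^{1,2})$ by an approximation argument; citing Proposition \ref{ProdChain} as it stands is not enough.
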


\begin{proof}
We denote by $\{H_{n,t^{2H}}:n\in\mathbb{N}\}$ the family of Hermite polynomials of parameter $t^{2H}$. Then, $\{H_{n,t^{2H}}/(\sqrt{n!}t^{Hn}):n\in\mathbb{N}\}$ is an orthonormal family of $L^2(\mathbb{R},\gamma_{t^{2H}}(dx))$ where $\gamma_{t^{2H}}$ is the centered Gaussian probability measure over $\mathbb{R}$ with variance $t^{2H}$. Since $\mathbb{E}[(F(B^H_t))^2]<\infty$, we have:
\begin{align*}
F=\sum_{n=0}^{\infty}c_{n,t^{2H}}\dfrac{H_{n,t^{2H}}}{\sqrt{n!}t^{Hn}}.
\end{align*}
where $c_{n,t^{2H}}$ are the Hermite coefficients of $F$ with respect to $\gamma_{t^{2H}}$. Using Wiener-Itô Theorem, we have:
\begin{align*}
F(B_t^H)=\sum_{n=0}^{\infty}c_{n,t^{2H}}\dfrac{I_n((g^H_{0,t})^{\otimes n})}{\sqrt{n!}t^{Hn}}.
\end{align*}
Let $n\geq 0$. Denote by $d_{n,t^{2H}}$ the Hermite coefficients of $F'$ with respect to $\gamma_{t^{2H}}$. We have:
\begin{align*}
d_{n,t^{2H}}=\int_{\mathbb{R}}F'(x)\dfrac{H_{n,t^{2H}}(x)}{\sqrt{n!}t^{Hn}}\gamma_{t^{2H}}(dx).
\end{align*}
Integrating by part, we obtain:
\begin{align*}
d_{n,t^{2H}}&=-\int_{\mathbb{R}}F(x)[nH_{n-1,t^{2H}}(x)-\dfrac{x}{t^{2H}}H_{n,t^{2H}}(x)]\dfrac{\gamma_{t^{2H}}(dx)}{\sqrt{n!}t^{Hn}},\\
d_{n,t^{2H}}&=\int_{\mathbb{R}}F(x)H_{n+1,t^{2H}}(x)\dfrac{\gamma_{t^{2H}}(dx)}{\sqrt{n!}t^{H(n+2)}},\\
d_{n,t^{2H}}&=\dfrac{\sqrt{n+1}}{t^H}c_{n+1,t^{2H}}.
\end{align*}
Since $\mathbb{E}[((N+E)^{1/2}F(B^H_t))^2]<+\infty$, $F'\in L^2(\mathbb{R},\gamma_{t^{2H}}(dx))$ and we have:
\begin{align*}
&F'=\sum_{n=0}^{\infty}\dfrac{\sqrt{n+1}}{t^H}c_{n+1,t^{2H}}\dfrac{H_{n,t^{2H}}}{\sqrt{n!}t^{Hn}},\\
&F'=\sum_{n=1}^{\infty}\dfrac{\sqrt{n}}{t^H}c_{n,t^{2H}}\dfrac{H_{n-1,t^{2H}}}{\sqrt{(n-1)!}t^{H(n-1)}}.
\end{align*}
Thus, by Wiener-Itô Theorem, we obtain:
\begin{align*}
&F'(B_t^H)=\sum_{n=1}^{\infty}\dfrac{\sqrt{n}}{t^H}c_{n,t^{2H}}\dfrac{I_{n-1}((g^H_{0,t})^{\otimes n-1})}{\sqrt{(n-1)!}t^{H(n-1)}},\\
&F'(B_t^H)=\sum_{n=1}^{\infty}n\dfrac{c_{n,t^{2H}}}{\sqrt{n!}t^{Hn}}I_{n-1}((g^H_{0,t})^{\otimes n-1}).
\end{align*}
Moreover, by assumption, $F(B_t^H)\in (\mathcal{W}^{1,2})$. Consequently, by Proposition \ref{ExGrad1}, we have, $\lambda\otimes\mu$-a.e:
\begin{align*}
&\nabla^{H-\frac{1}{2}}(F(B^H_t))(s,\omega)=\sum_{n=1}^{\infty}n\dfrac{c_{n,t^{2H}}}{\sqrt{n!}t^{Hn}}I_{n-1}(<\delta_s\circ I^{H-\frac{1}{2}}_+;(g^H_{0,t})^{\otimes n}>)(\omega),\\
&\nabla^{H-\frac{1}{2}}(F(B^H_t))(s,\omega)=I_+^{H-\frac{1}{2}}(g^H_{0,t})(s)\sum_{n=1}^{\infty}n\dfrac{c_{n,t^{2H}}}{\sqrt{n!}t^{Hn}}I_{n-1}((g^H_{0,t})^{\otimes n-1})(\omega),\\
&\nabla^{H-\frac{1}{2}}(F(B^H_t))(s,\omega)=\dfrac{A(H)\beta(H-\frac{1}{2},2-2H)}{(\Gamma(H-\frac{1}{2}))^2}\big(\int_0^t|s-r|^{2H-2}dr\big)F'(B^H_t)(\omega).
\end{align*}
\end{proof}

\begin{prop}\label{ConGradfbm}
Let $F$ be in $C^1(\mathbb{R})$. Assume that, for every $t\in(a,b)$, $F(B^H_t)\in(\mathcal{W}^{1,2})$. Then, we have:
\begin{align*}
A(H)\int_a^b\Big(\int_t^{t+\epsilon}\nabla^{H-\frac{1}{2}}(F(B^H_t))(s,)\frac{ds}{\epsilon}\Big)dt\overset{(L^2)}{\underset{\epsilon\rightarrow 0^+}{\longrightarrow}}H\int_a^bt^{2H-1}F'(B^H_t)dt,
\end{align*}
\end{prop}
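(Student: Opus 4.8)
The plan is to combine the explicit formula for $\nabla^{H-\frac{1}{2}}(F(B^H_t))$ provided by Proposition \ref{Gradfbm} with a routine analysis of the resulting deterministic kernel. By Proposition \ref{Gradfbm}, for a.e.\ $(s,\omega)$ we have
\begin{align*}
\nabla^{H-\frac{1}{2}}(F(B^H_t))(s,\omega)=\dfrac{A(H)\beta(H-\frac{1}{2},2-2H)}{(\Gamma(H-\frac{1}{2}))^2}\Big(\int_0^t|s-r|^{2H-2}dr\Big)F'(B^H_t)(\omega),
\end{align*}
so that the left-hand side of the statement equals
\begin{align*}
\dfrac{A(H)^2\beta(H-\frac{1}{2},2-2H)}{(\Gamma(H-\frac{1}{2}))^2}\int_a^b\Big(\int_t^{t+\epsilon}\int_0^t|s-r|^{2H-2}dr\,\frac{ds}{\epsilon}\Big)F'(B^H_t)dt.
\end{align*}
First I would identify the constant: recalling the value of $A(H)$ from the definition of fBm, one checks that $A(H)^2\beta(H-\frac12,2-2H)/(\Gamma(H-\frac12))^2 = H(2H-1)$ (this is exactly the normalization making $\mathbb{E}[|B^H_1|^2]=1$, since the covariance of fBm is reproduced by this kernel). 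So the claim reduces to showing that
\begin{align*}
H(2H-1)\int_a^b\Big(\frac{1}{\epsilon}\int_t^{t+\epsilon}\int_0^t|s-r|^{2H-2}dr\,ds\Big)F'(B^H_t)dt\overset{(L^2)}{\longrightarrow}H\int_a^b t^{2H-1}F'(B^H_t)dt.
\end{align*}

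Next I would analyze the deterministic averaging kernel $K_\epsilon(t):=\frac{1}{\epsilon}\int_t^{t+\epsilon}\big(\int_0^t|s-r|^{2H-2}dr\big)ds$. The inner function $s\mapsto g(t,s):=\int_0^t|s-r|^{2H-2}dr = \frac{1}{2H-1}\big(s^{2H-1}-(s-t)^{2H-1}\big)$ for $s>t$ is continuous in $s$ on $[t,\infty)$ with $g(t,t)=\frac{t^{2H-1}}{2H-1}$; hence by continuity $K_\epsilon(t)\to g(t,t)=\frac{t^{2H-1}}{2H-1}$ pointwise in $t$ as $\epsilon\to0^+$, and multiplying by $H(2H-1)$ gives the pointwise limit $H\,t^{2H-1}$, matching the right-hand side. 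To upgrade pointwise convergence of the integrand to $(L^2)$-convergence of the integral, I would first establish a uniform-in-$\epsilon$ bound $0\le K_\epsilon(t)\le C_{a,b}$ on $(a,b)$ (clear since $g(t,\cdot)$ is bounded on $[a,b+1]$ by a constant depending only on $a,b,H$), then write the difference as $\int_a^b\big(H(2H-1)K_\epsilon(t)-H t^{2H-1}\big)F'(B^H_t)dt$, take the $(L^2)$-norm inside via Minkowski's integral inequality, obtaining the bound $\int_a^b |H(2H-1)K_\epsilon(t)-H t^{2H-1}|\,\|F'(B^H_t)\|_{(L^2)}dt$, and conclude by dominated convergence using the uniform bound on $K_\epsilon$ together with the integrability of $t\mapsto\|F'(B^H_t)\|_{(L^2)}$ on $(a,b)$ (which follows from the hypothesis $F(B^H_t)\in(\mathcal{W}^{1,2})$, or equivalently from the growth condition on $F'$ and $\lambda<1/(4b^{2H})$ in the setting of Theorem \ref{Main1}).

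The one point requiring a little care — and the main obstacle — is justifying that $t\mapsto\|F'(B^H_t)\|_{(L^2)}$ is in $L^1((a,b))$ and that the dominated convergence argument is legitimate uniformly near the endpoint $t=a$ when $a$ could be $0$ (where $t^{2H-1}$ degenerates but stays integrable since $2H-1>0$). For $a>0$ everything is bounded and the argument is immediate; for $a=0$ one notes that $K_\epsilon(t)$ and $t^{2H-1}$ are both bounded on any $(0,b)$ by $C_{b,H}$ and integrable, so the dominating function $2C_{b,H}\|F'(B^H_t)\|_{(L^2)}$ is in $L^1((a,b))$ provided $\sup_{t\in(a,b)}\|F'(B^H_t)\|_{(L^2)}<\infty$, which holds under the stated growth hypothesis on $F'$. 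With these observations in place, dominated convergence yields the $(L^2)$-limit and completes the proof.
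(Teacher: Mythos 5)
Your proposal is correct and follows essentially the same route as the paper: the paper's proof is precisely ``apply Proposition \ref{Gradfbm} plus standard calculations,'' and you have supplied those calculations (the constant identification $A(H)^2\beta(H-\tfrac12,2-2H)/\Gamma(H-\tfrac12)^2=H(2H-1)$, the pointwise limit of the averaged kernel, and Minkowski plus dominated convergence), with the integrability of $t\mapsto\|F'(B^H_t)\|_{(L^2)}$ secured exactly as in Remark \ref{Cond}.
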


\begin{proof} 
Using Proposition \ref{Gradfbm} as well as standard calculations lead to the convergence of this trace term.
\end{proof}

\begin{rem}\label{Cond}
Let $F$ be as in Proposition \ref{ConGradfbm}. In order to apply Proposition \ref{Decfbm2} to $\Phi_t=F(B^H_t)$, it is sufficient to prove that:
\begin{align*}
&\int_a^b\|F(B^H_t)\|^2_{(L^2)}dt<+\infty,\\
&\int_a^b\|\nabla^{H-\frac{1}{2}}(F(B^H_t))\|^2_{L^2((a,b);(L^2))}dt<+\infty.
\end{align*}
If we assume that there exist $C>0$ and $\lambda>0$ with $\lambda<1/(4b^{2H})$ such that:
\begin{align*}
\forall x\in\mathbb{R},\ \max\{F(x),F'(x)\}\leq Ce^{\lambda x^2}.
\end{align*}
Then, we have:
\begin{align*}
\int_a^b\|F(B^H_t)\|^2_{(L^2)}dt&\leq C\int_a^b\dfrac{dt}{\sqrt{1-4\lambda t^{2H}}}<\infty,\\
\int_a^b\|\nabla^{H-\frac{1}{2}}(F(B^H_t))\|^2_{L^2((a,b);(L^2))}dt&=\int_a^b\|I^{H-\frac{1}{2}}_+(g_t^H)\|^2_{L^2((a,b))}\|F'(B^H_t)\|^2_{(L^2)}dt,\\
&\leq C_{a,b,H}\int_a^b \dfrac{t^{2H}}{\sqrt{1-4\lambda t^{2H}}}dt<\infty.
\end{align*}
\end{rem}
\noindent
We conclude this subsection by the proof of Theorem \ref{Main1}.

%\begin{theo}\label{theofbm}
%Let $F$ be in $C^1(\mathbb{R})$. Assume that, for every $t\in (a,b)$, $F(B^H_t)\in(\mathcal{W}^{1,2})$ and that:
%\begin{align*}
%\forall x\in\mathbb{R},\ \max\{F(x),F'(x)\}\leq Ce^{\lambda x^2},
%\end{align*}
%for some $C>0$ and $\lambda>0$ with $\lambda<1/(4b^{2H})$. Then, we have:
%\begin{align*}
%\int_a^bF(B^H_t)d^{-}B^H_t=(\nabla^{H-\frac{1}{2}})^*(F(B^H_{.}))+H\int_a^bt^{2H-1}F'(B^H_t)dt,
%\end{align*}
%where the equality stands in $(L^2)$.
%\end{theo}

\begin{proof}
We are going to prove that the term $\int_a^bF(B^H_t)\diamond (B^H_{t+\epsilon}-B^H_t)dt/\epsilon$ converges in $(L^2)$ to the following random variable:
\begin{align*}
(\nabla^{H-\frac{1}{2}})^*(F(B^H_.)).
\end{align*}
First of all, for every $t\in (a,b)$, since $F(B^H_t)\in (\mathcal{W}^{1,2})$:
\begin{align*}
F(B^H_t)\diamond \dfrac{B^H_{t+\epsilon}-B^H_t}{\epsilon}&=A(H)F(B^H_t)\diamond I_1(\int_t^{t+\epsilon}\frac{(s-.)^{H-\frac{3}{2}}_+}{\Gamma(H-\frac{1}{2})}\frac{ds}{\epsilon}),\\
&=\frac{A(H)}{\Gamma(H-\frac{1}{2})}\nabla^*\bigg(F(B^H_t)\int_t^{t+\epsilon}(s-.)^{H-\frac{3}{2}}_+\frac{ds}{\epsilon}\bigg).
\end{align*}
Indeed, using the $S$-transform, we have, for every $\xi\in S(\mathbb{R})$:
\begin{align*}
S(\nabla^*\bigg(F(B^H_t)\int_t^{t+\epsilon}(s-.)^{H-\frac{3}{2}}_+\frac{ds}{\epsilon}\bigg))(\xi)&=\mathbb{E}[\nabla^*\bigg(F(B^H_t)\int_t^{t+\epsilon}(s-.)^{H-\frac{3}{2}}_+\frac{ds}{\epsilon}\bigg):e^{\langle,\xi\rangle}:],\\
&=\mathbb{E}[F(B^H_t)\langle \int_t^{t+\epsilon}(s-.)^{H-\frac{3}{2}}_+\frac{ds}{\epsilon}, \xi(.)\rangle_{L^2(\mathbb{R})}:e^{\langle,\xi\rangle}:],\\
&=\mathbb{E}[F(B^H_t):e^{\langle,\xi\rangle}:]\Gamma(H-\frac{1}{2})\int_{t}^{t+\epsilon}I^{H-\frac{1}{2}}_+(\xi)(s)\frac{ds}{\epsilon},\\
&=\frac{\Gamma(H-\frac{1}{2})}{A(H)}S(F(B^H_t))(\xi)S(\dfrac{B^H_{t+\epsilon}-B^H_t}{\epsilon})(\xi).
\end{align*}
Therefore, we are left to prove that:
\begin{align*}
\frac{1}{\Gamma(H-\frac{1}{2})}\nabla^*\big(\int_a^bF(B^H_t)\bigg(\int_t^{t+\epsilon}(s-.)^{H-\frac{3}{2}}_+\frac{ds}{\epsilon}\bigg)dt\big)\underset{\epsilon\rightarrow 0^+}{\overset{(L^2)}{\longrightarrow}}\nabla^*(I_{-}^{H-\frac{1}{2}}(\mathbb{I}_{(a,b)}(.)F(B^H_.))).
\end{align*}
For this purpose, we are going to prove that:
\begin{align*}
\frac{1}{\Gamma(H-\frac{1}{2})}\int_a^bF(B^H_t)\bigg(\int_t^{t+\epsilon}(s-*)^{H-\frac{3}{2}}_+\frac{ds}{\epsilon}\bigg)dt\underset{\epsilon\rightarrow 0^+}{\longrightarrow}\int_a^b \dfrac{(t-*)^{H-\frac{3}{2}}_{+}}{\Gamma(H-\frac{1}{2})}F(B^H_t)dt,
\end{align*}
where the convergence holds in $L^2(\mathbb{R})\otimes(\mathcal{W}^{1,2})$. We denote by $(I)$ the square of the $L^2(\mathbb{R})\otimes(\mathcal{W}^{1,2})$-norm of the difference between the left-hand side and the right-hand side. We have:
\begin{align*}
(I)&=\int_{\mathbb{R}}\|\int_a^bF(B^H_t)\bigg(\int_t^{t+\epsilon}((s-x)^{H-\frac{3}{2}}_+-(t-x)^{H-\frac{3}{2}}_{+})\frac{ds}{\epsilon}\bigg)dt\|^2_{(\mathcal{W}^{1,2})}dx,\\
(I)&=\int_{\mathbb{R}}\langle\int_a^bF(B^H_{t_1})\bigg(\int_{t_1}^{t_1+\epsilon}((s-x)^{H-\frac{3}{2}}_+-(t_1-x)^{H-\frac{3}{2}}_{+})\frac{ds}{\epsilon}\bigg)dt_1;\\
&\int_a^bF(B^H_{t_2})\bigg(\int_{t_2}^{t_2+\epsilon}((r-x)^{H-\frac{3}{2}}_+-(t_2-x)^{H-\frac{3}{2}}_{+})\frac{dr}{\epsilon}\bigg)dt_2\rangle_{(\mathcal{W}^{1,2})}dx,\\
(I)&=\int_{(a,b)\times(a,b)}\langle F(B^H_{t_1}),F(B^H_{t_2})\rangle_{(\mathcal{W}^{1,2})}\bigg(\int_{\mathbb{R}}\bigg(\int_{(t_1,t_1+\epsilon)\times(t_2,t_2+\epsilon)}\big((s-x)^{H-\frac{3}{2}}_+-(t_1-x)^{H-\frac{3}{2}}_{+}\big)\\
&\times\big((r-x)^{H-\frac{3}{2}}_+-(t_2-x)^{H-\frac{3}{2}}_{+}\big)\frac{dsdr}{\epsilon^2}\bigg)dx\bigg) dt_1dt_2,\\
(I)&=\beta(H-\frac{1}{2},2-2H)\int_{(a,b)\times(a,b)}\langle F(B^H_{t_1}),F(B^H_{t_2})\rangle_{(\mathcal{W}^{1,2})}\bigg(\int_{t_1}^{t_1+\epsilon}\int_{t_2}^{t_2+\epsilon}\big[|s-r|^{2H-2}-|t_2-s|^{2H-2}\\
&-|t_1-r|^{2H-2}+|t_1-t_2|^{2H-2}\big]\frac{dsdr}{\epsilon^2}\bigg)dt_1dt_2.
\end{align*}
For the last equality, we have used the following relation which holds for every $\gamma\in (-1,-1/2)$:
\begin{align*}
\int_{\mathbb{R}}(s_1-u)^{\gamma}_+(s_2-u)^{\gamma}_+du= \beta(\gamma+1,-2\gamma-1)|s_2-s_1|^{2\gamma+1}.
\end{align*}
Moreover, by Cauchy-Schwarz inequality, we have:
\begin{align*}
|\langle F(B^H_{t_1}),F(B^H_{t_2})\rangle_{(\mathcal{W}^{1,2})}|\leq \|F(B^H_{t_1})\|_{(\mathcal{W}^{1,2})}\|F(B^H_{t_2})\|_{(\mathcal{W}^{1,2})},
\end{align*}
And, by Meyer inequality, we have, for every $i\in \{1,2\}$:
\begin{align*}
\|F(B^H_{t_i})\|_{(\mathcal{W}^{1,2})}&\leq C\bigg(\|F(B^H_{t_i})\|^2_{(L^2)}+\|\nabla(F(B^H_{t_i}))\|^2_{L^2(\mathbb{R})\otimes(L^2)}\bigg)^{\frac{1}{2}},\\
&\leq C\bigg(\dfrac{1}{\sqrt{1-4\lambda t_i^{2H}}}+\dfrac{t_i^{2H}}{\sqrt{1-4\lambda t_i^{2H}}}\bigg)^{\frac{1}{2}},\\
&\leq C(1+t_i^{2H})^{\frac{1}{2}}\bigg(\dfrac{1}{1-4\lambda t_i^{2H}}\bigg)^{\frac{1}{4}}.
\end{align*}
Thus:
\begin{align*}
(I)&\leq C\int_{(a,b)\times(a,b)}\prod_{i=1}^2\bigg((1+t_i^{2H})^{\frac{1}{2}}\bigg(\dfrac{1}{1-4\lambda t_i^{2H}}\bigg)^{\frac{1}{4}}\bigg)\bigg|\bigg(\int_{t_1}^{t_1+\epsilon}\int_{t_2}^{t_2+\epsilon}\big[|s-r|^{2H-2}-|t_2-s|^{2H-2}\\
&-|t_1-r|^{2H-2}+|t_1-t_2|^{2H-2}\big]\frac{dsdr}{\epsilon^2}\bigg)\bigg|dt_1dt_2.
\end{align*}
For every $t_1,t_2$ in $(a,b)\times (a,b)$ such that $t_1\ne t_2$, we have:
\begin{align*}
\bigg(\int_{t_1}^{t_1+\epsilon}\int_{t_2}^{t_2+\epsilon}\big[|s-r|^{2H-2}-|t_2-s|^{2H-2}-|t_1-r|^{2H-2}+|t_1-t_2|^{2H-2}\big]\frac{dsdr}{\epsilon^2}\bigg)\underset{\epsilon\rightarrow 0^+}{\longrightarrow}0.
\end{align*}
To pursue, we need to provide an upper bound for:
\begin{align*}
(II)=\int_{t_1}^{t_1+\epsilon}\int_{t_2}^{t_2+\epsilon}\big[|s-r|^{2H-2}-|t_2-s|^{2H-2}-|t_1-r|^{2H-2}+|t_1-t_2|^{2H-2}\big]\frac{dsdr}{\epsilon^2}.
\end{align*} 
We assume without loss of generality that $t_1\ne t_2$, $t_1> t_2$ and $\epsilon< (t_1-t_2)/2$. Note that $t_2+\epsilon< t_1$. We have, for every $(s,r)\in (t_1,t_1+\epsilon)\times(t_2,t_2+\epsilon)$, since $2H-2<0$:
\begin{align*}
&(t_1+\epsilon-t_2)^{2H-2}\leq(s-r)^{2H-2}\leq (t_1-t_2-\epsilon)^{2H-2},\\
&(t_1+\epsilon-t_2)^{2H-2}\leq(s-t_2)^{2H-2}\leq (t_1-t_2)^{2H-2},\\
&(t_1-t_2)^{2H-2}\leq(t_1-r)^{2H-2}\leq (t_1-t_2-\epsilon)^{2H-2}.
\end{align*}
Thus,
\begin{align*}
(II)&\leq (t_1-t_2-\epsilon)^{2H-2}-(t_1+\epsilon-t_2)^{2H-2},\\
&\leq (t_1-t_2)^{2H-2}[(1-\frac{\epsilon}{t_1-t_2})^{2H-2}-(1+\frac{\epsilon}{t_1-t_2})^{2H-2}],\\
&\leq (t_1-t_2)^{2H-2}\underset{\epsilon\in [0,\frac{1}{2}]}{\max}[(1-\epsilon)^{2H-2}-(1+\epsilon)^{2H-2}].
\end{align*}
Consequently, we have the following upper bound:
\begin{align*}
\bigg|\bigg(\int_{t_1}^{t_1+\epsilon}\int_{t_2}^{t_2+\epsilon}\big[|s-r|^{2H-2}-|t_2-s|^{2H-2}-|t_1-r|^{2H-2}+|t_1-t_2|^{2H-2}\big]\frac{dsdr}{\epsilon^2}\bigg)\bigg|\leq C' |t_1-t_2|^{2H-2}.
\end{align*}
To conclude, we need to prove the finiteness of the following integral:
\begin{align*}
\int_{(a,b)\times(a,b)}\prod_{i=1}^2\bigg((1+t_i^{2H})^{\frac{1}{2}}\bigg(\dfrac{1}{1-4\lambda t_i^{2H}}\bigg)^{\frac{1}{4}}\bigg)|t_1-t_2|^{2H-2}dt_1dt_2.
\end{align*}
Denoting by $h(t)=(1+t^{2H})^{1/2}\big(1/(1-4\lambda t^{2H})\big)^{1/4}$ and using Hardy-Littlewood-Sobolev inequality (Theorem 4.3 page 106 of \cite{LL01}), we obtain:
\begin{align*}
\int_{(a,b)\times(a,b)}h(t_1)h(t_2)|t_1-t_2|^{2H-2}dt_1dt_2\leq C_H \|h\|^2_{L^{1/H}((a,b))}
\end{align*}
But $1/H\in (1,2)$, we have:
\begin{align*}
\int_{(a,b)\times(a,b)}\prod_{i=1}^2\bigg((1+t_i^{2H})^{\frac{1}{2}}\bigg(\dfrac{1}{1-4\lambda t_i^{2H}}\bigg)^{\frac{1}{4}}\bigg)|t_1-t_2|^{2H-2}dt_1dt_2\leq C_{H,a,b}\|h\|^2_{L^{2}((a,b))}<+\infty.
\end{align*}
By Lebesgue dominated convergence theorem, we obtain the desired convergence. Applying Proposition \ref{ConGradfbm} and Remark \ref{Cond} leads to the result.
\end{proof}

%%%%%%%%% Partie III 2 Rosenblatt %%%%%%%%%%%%%%%%%%%%%%%%%%%%%%%%%%%%%%%%%%%%%%%%%%%%%%%%%%%%%%%%%%%%%%%%%%%%%%%%%%%%%%%

\subsection{The Rosenblatt process.}
Before stating the first result of this section, we introduce a useful notation. For any $t\in (a,b)$ and for any $\epsilon>0$, we define:
\begin{align*}
h^H_{t,t+\epsilon}(x_1,x_2)=d(H)\int_t^{t+\epsilon}\dfrac{(s-x_1)^{\frac{H}{2}-1}_+}{\Gamma\big(\frac{H}{2}\big)}\dfrac{(s-x_2)^{\frac{H}{2}-1}_+}{\Gamma\big(\frac{H}{2}\big)}ds
\end{align*} 

\begin{prop}\label{DecRos1}
Let $t\in (a,b)$. Let $\epsilon>0$ such that $t+\epsilon<b$. Let $F$ be in $C^{\infty}\big(\mathbb{R}\big)$ with polynomial growth (as well as its derivatives). Then, we have:
\begin{align*}
F(X^H_t)\dfrac{X^H_{t+\epsilon}-X^H_t}{\epsilon}&\overset{(S)^*}{=}F(X^H_t)\diamond\dfrac{X^H_{t+\epsilon}-X^H_t}{\epsilon}+2d(H)\int_t^{t+\epsilon}D^*_{\delta_s\circ I^{\frac{H}{2}}_+}(\nabla^{\frac{H}{2}}(F(X^H_t))(s,.))\frac{ds}{\epsilon}\\
&+\langle \nabla^{(2)}\big(F(X^H_t)\big); h^H_{t,t+\epsilon}\rangle_{\hat{L}^2(\mathbb{R}^2)}.
\end{align*}
\end{prop}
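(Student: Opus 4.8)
The plan is to mimic the structure of the proof of Proposition \ref{Decfbm1}, now working with the second-chaos integrator $X^H$ and therefore having to expand a product of the form $I_n(\phi_n)\,I_2(g)$ rather than $I_n(\phi_n)\,I_1(g)$. First I would write $F(X^H_t)=\sum_{n=0}^{\infty}I_n(\phi_n(\cdot,t))$ in $(L^2)$ and, for each fixed $n$, compute $I_n(\phi_n(\cdot,t))\,(X^H_{t+\epsilon}-X^H_t)=I_n(\phi_n(\cdot,t))\,I_2(h^H_{t,t+\epsilon})$ using the multiplication formula for multiple Wiener--It\^o integrals (Proposition $1.1.3$ of \cite{N06}). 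This produces three terms: the top-order term $I_{n+2}(\phi_n(\cdot,t)\hat\otimes h^H_{t,t+\epsilon})$, a first-order-contraction term $2n\,I_n(\phi_n(\cdot,t)\otimes_1 h^H_{t,t+\epsilon})$, and a second-order-contraction term $n(n-1)\,I_{n-2}(\phi_n(\cdot,t)\otimes_2 h^H_{t,t+\epsilon})$. Summing over $n$ and taking $S$-transforms (justified by continuity of the $(L^2)$ inner product as in Proposition \ref{Decfbm1}), I would identify the three resulting series with, respectively, $S\big(F(X^H_t)\diamond(X^H_{t+\epsilon}-X^H_t)\big)(\xi)$, $S$ of the operator term involving $D^*_{\delta_s\circ I^{H/2}_+}\circ\nabla^{H/2}$, and $S$ of the pointwise pairing $\langle\nabla^{(2)}(F(X^H_t));h^H_{t,t+\epsilon}\rangle_{\hat L^2(\mathbb{R}^2)}$.

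For the middle term the key computation is that the order-one contraction against $h^H_{t,t+\epsilon}$ factors through one fractional integration: by Fubini, $n\,\phi_n(\cdot,t)\otimes_1 h^H_{t,t+\epsilon}=d(H)\int_t^{t+\epsilon}\big(\delta_s\circ I^{H/2}_+\big)\,\tilde\otimes\,\langle\delta_s\circ I^{H/2}_+;\phi_n(\cdot,t)\rangle\,ds$ up to the symmetrization/multiplicity bookkeeping, so that summing in $n$ yields $2d(H)\int_t^{t+\epsilon}D^*_{\delta_s\circ I^{H/2}_+}\big(\nabla^{H/2}(F(X^H_t))(s,\cdot)\big)\frac{ds}{\epsilon}$ after recognizing the inner sum as $\nabla^{H/2}(F(X^H_t))$ via Proposition \ref{ExGrad1} and the outer $D^*$ as the adjoint G\^ateaux derivative in direction $\delta_s\circ I^{H/2}_+$. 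For the last term, the order-two contraction $n(n-1)\,\phi_n(\cdot,t)\otimes_2 h^H_{t,t+\epsilon}$ is exactly the pairing of the second Fr\'echet derivative $\nabla^{(2)}(F(X^H_t))=\sum_n n(n-1)I_{n-2}(\cdots)$ against $h^H_{t,t+\epsilon}\in\hat L^2(\mathbb{R}^2)$, using Theorem \ref{DevGF} and the fact that $F$ has polynomial growth so $F(X^H_t)\in(\mathcal{W}^{\infty,\infty})$ and all series converge nicely.

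The main obstacle will be the Fubini / term-by-term interchange arguments needed to pass the $S$-transform through the integral over $(t,t+\epsilon)$ in the middle term and through the infinite series in both the middle and last terms. Concretely I need a dominating bound analogous to the one in Proposition \ref{Decfbm1}: using Theorem $24.1$ of \cite{SKM93} to control $\|I^{(0,\dots,0,H/2,H/2)}_{+,\dots,+}(\phi_n(\cdot,t))\|$ by $C\|\phi_n(\cdot,t)\|_{L^2(\mathbb{R}^n)}$, then summing the resulting series with the extra combinatorial factors $n$, $n(n-1)$ and the $\exp(\tfrac12\|\xi\|^2_{L^2})$ prefactor, and recognizing the sums as the $(\mathcal{W}^{1,2})$- and $(\mathcal{W}^{2,2})$-norms of $F(X^H_t)$ (finite since $F$ and its derivatives have polynomial growth, so $F(X^H_t)\in(\mathcal{W}^{\infty,\infty})$). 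The polynomial growth assumption is what makes these bounds uniform enough; once they are in place, injectivity of the $S$-transform (Theorem \ref{STrans}, part $1$) upgrades the equality of $S$-transforms to the claimed identity in $(S)^*$.
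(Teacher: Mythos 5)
Your argument is correct in substance, but it follows a genuinely different route from the paper's proof of this proposition. You transplant the chaos-expansion strategy of Proposition \ref{Decfbm1}: expand $F(X^H_t)=\sum_{n}I_n(\phi_n(\cdot,t))$, multiply each chaos by $I_2(h^H_{t,t+\epsilon})$ via the product formula, and identify the three contraction series (weights $1$, $2n$, $n(n-1)$), after taking $S$-transforms, with the Wick term, the term $2d(H)\int_t^{t+\epsilon}D^*_{\delta_s\circ I^{H/2}_+}(\nabla^{H/2}(F(X^H_t))(s,\cdot))\,ds/\epsilon$, and the pairing $\langle\nabla^{(2)}(F(X^H_t));h^H_{t,t+\epsilon}\rangle_{\hat L^2(\mathbb{R}^2)}$; your dominating bounds (Theorem $24.1$ of \cite{SKM93} together with $F(X^H_t)\in(\mathcal{W}^{\infty,\infty})$) are the right tools to justify the interchange of the series with the $ds$-integral, exactly as in the fBm case. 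The paper instead never expands in chaos: it writes $X^H_{t+\epsilon}-X^H_t=I_2(h^H_{t,t+\epsilon})=(\nabla^{(2)})^*(h^H_{t,t+\epsilon})$, transfers the adjoint by duality onto the test vector $:e^{\langle\,;\xi\rangle}:F(X^H_t)$, and applies the second-order Leibniz rule, so that the three terms appear simultaneously from $\nabla^{(2)}\big(:e^{\langle\,;\xi\rangle}:F(X^H_t)\big)$, with only one Fubini step needed for the middle term. Your route buys uniformity with the fBm proof and explicit kernel formulas at every chaos level, at the price of heavier bookkeeping (the weights $n$, $n(n-1)$ in the domination); the paper's route buys economy and much lighter estimates. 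If you write yours up, verify the middle-term identity at the level of $S$-transforms (using $S(D^*_y\Psi)(\xi)=\langle y;\xi\rangle S(\Psi)(\xi)$ and $\langle\delta_s\circ I^{H/2}_+;\xi\rangle=I^{H/2}_+(\xi)(s)$) rather than as an equality of kernels, since $\delta_s\circ I^{H/2}_+$ is a tempered distribution and not an $L^2$ function, and note explicitly that for fixed $s$ the integrand $D^*_{\delta_s\circ I^{H/2}_+}(\nabla^{H/2}(F(X^H_t))(s,\cdot))$ is only an element of $(S)^*$, so the $ds$-integral must be read as an $(S)^*$-integral whose existence rests on $(I^{H/2}_+\otimes E)\circ|\nabla(F(X^H_t))|\in L^2((a,b))\otimes(L^2)$ --- precisely the key fact the paper singles out.
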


\begin{proof}
Let $\xi\in S\big(\mathbb{R}\big)$. We have:
\begin{align*}
S\big(F(X^H_t)(X^H_{t+\epsilon}-X^H_t)\big)(\xi)&=\mathbb{E}\big[:e^{\langle ;\xi\rangle}: F(X^H_t) I_2\big(h^H_{t,t+\epsilon}\big)\big],\\
&=\mathbb{E}\big[:e^{\langle ;\xi\rangle}: F(X^H_t) \big(\nabla^{(2)}\big)^*\big(h^H_{t,t+\epsilon}\big)\big],\\
&=\langle :e^{\langle ;\xi\rangle}: F(X^H_t); \big(\nabla^{(2)}\big)^*\big(h^H_{t,t+\epsilon}\big)\rangle_{(L^2)},\\
&=\langle \nabla^{(2)}\big(:e^{\langle ;\xi\rangle}: F(X^H_t)\big); h^H_{t,t+\epsilon}\rangle_{\hat{L}^2(\mathbb{R}^2)\otimes(L^2)},\\
&= \langle \xi^{\otimes 2}:e^{\langle ;\xi \rangle}: F(X^H_t)+2 \xi :e^{\langle ;\xi \rangle}: \nabla\big(F(X^H_t)\big);h^H_{t,t+\epsilon}\rangle_{\hat{L}^2(\mathbb{R}^2)\otimes(L^2)}\\
&+\langle:e^{\langle ;\xi \rangle}: \nabla^{(2)}\big(F(X^H_t)\big);h^H_{t,t+\epsilon}\rangle_{\hat{L}^2(\mathbb{R}^2)\otimes(L^2)}.
\end{align*}
Now we compute separately the three terms which appear in the previous sum. For the first term, we have:
\begin{align*}
\langle \xi^{\otimes 2}:e^{\langle ;\xi \rangle}: F(X^H_t);h^H_{t,t+\epsilon}\rangle_{\hat{L}^2(\mathbb{R}^2)\otimes(L^2)}&=S\big(F(X^H_t)\big)(\xi)S\big(I_2(h^H_{t,t+\epsilon})\big)(\xi),\\
&=S\big(F(X^H_t)\diamond(X^H_{t+\epsilon}-X^H_t)\big)(\xi).
\end{align*}
For the second term, we have:
\begin{align*}
\langle \xi :e^{\langle ;\xi \rangle}: \nabla\big(F(X^H_t)\big);h^H_{t,t+\epsilon}\rangle_{\hat{L}^2(\mathbb{R}^2)\otimes (L^2)}&=\langle \xi \otimes \mathbb{E}\big[:e^{\langle ;\xi\rangle}: \nabla\big(F(X^H_t)\big)\big] ; h^H_{t,t+\epsilon} \rangle_{\hat{L}^2(\mathbb{R}^2)},\\
&=\int_{\mathbb{R}^2}\xi(x_1)\mathbb{E}\big[:e^{\langle ;\xi\rangle}: \nabla\big(F(X^H_t)(x_2,.)\big)\big] h^H_{t,t+\epsilon}(x_1,x_2)dx_1\otimes dx_2,\\
&=d(H)\int_{\mathbb{R}^2}\xi(x_1)G(t,x_2) \big(\int_t^{t+\epsilon}\dfrac{(s-x_1)^{\frac{H}{2}-1}_+}{\Gamma(H/2)}\dfrac{(s-x_2)^{\frac{H}{2}-1}_+}{\Gamma(H/2)}ds\big)dx_1\otimes dx_2,\\
&=d(H)\int_t^{t+\epsilon}I_+^{\frac{H}{2}}(\xi)(s)I^{\frac{H}{2}}_+(G(t,.))(s)ds,\\
&=d(H)\int_t^{t+\epsilon}I_+^{\frac{H}{2}}(\xi)(s)S\big(\nabla^{\frac{H}{2}}\big(F(X^H_t)\big)(s,)\big)(\xi)ds,\\
&=d(H)S\bigg(\int_t^{t+\epsilon}D^*_{\delta_s\circ I^{\frac{H}{2}}_+}(\nabla^{\frac{H}{2}}(F(X^H_t))(s,.))ds\bigg)(\xi),
\end{align*}
where we have set,
\begin{align*}
G(t,x_2)=\mathbb{E}\big[:e^{\langle ;\xi\rangle}: \nabla\big(F(X^H_t)(x_2,.)\big)\big],
\end{align*}
and used successively Fubini Theorem twice and the $(S)^*$-integrability of,
\begin{align*}
s\rightarrow D^*_{\delta_s\circ I^{\frac{H}{2}}_+}(\nabla^{\frac{H}{2}}(F(X^H_t))(s,.)),
\end{align*} 
over $(t,t+\epsilon)$. We note in particular that the key fact is that $\big(I^{H/2}_+\otimes E\big) \circ \mid \nabla\big(F(X^H_t)\big)\mid$ is in $L^2\big((a,b)\big)\otimes(L^2)$. For the third term, we have:
\begin{align*}
\langle:e^{\langle ;\xi \rangle}: \nabla^{(2)}\big(F(X^H_t)\big);h^H_{t,t+\epsilon}\rangle_{\hat{L}^2(\mathbb{R}^2)\otimes(L^2)}&=S\big(\langle \nabla^{(2)}\big(F(X^H_t)\big) ;h^H_{t,t+\epsilon}\rangle_{\hat{L}^2(\mathbb{R}^2)}\big)(\xi).
\end{align*}
\end{proof}
 \noindent
In the subsequent subsections, we analyse independently the three terms in the previous decomposition. In particular, we will prove the following strong convergence result which allows us to obtain the explicit decomposition of the forward integral of $F(X^H_t)$ with respect to $X^H_t$.

\begin{prop}\label{StrConv}
We have, in $(L^2)$:
\begin{align*}
&\int_a^b F(X^H_t)\diamond\dfrac{X^H_{t+\epsilon}-X^H_t}{\epsilon}dt \underset{\epsilon\rightarrow 0^+}{\longrightarrow} d(H)\big(\nabla^{(2)}\big)^*\bigg(\int_a^b F(X^H_t)\dfrac{(t-.)^{\frac{H}{2}-1}_+(t-\#)^{\frac{H}{2}-1}_+}{\Gamma(H/2)^2}dt\bigg),\\
&2d(H)\int_a^b2\bigg(\int_t^{t+\epsilon}D^*_{\delta_s\circ I^{\frac{H}{2}}_+}(\nabla^{\frac{H}{2}}(F(X^H_t))(s,.))\frac{ds}{\epsilon}\bigg)dt \underset{\epsilon\rightarrow 0^+}{\longrightarrow} B(H)\nabla^*\bigg(\int_a^b F'(X^H_t)(t-.)^{\frac{H}{2}-1}_+I_1(l^H_{t,t})dt\bigg),\\
&\langle \nabla^{(2)}\big(F(X^H_t)\big); h^H_{t,t+\epsilon}\rangle_{\hat{L}^2(\mathbb{R}^2)}\underset{\epsilon\rightarrow 0^+}{\longrightarrow} H\int_{a}^bt^{2H-1}F'(X^H_t)dt+\frac{H}{2}\kappa_3(X^H_1)\int_a^bt^{3H-1}F^{(2)}(X^H_t)dt\\
&+C(H)\int_a^bI_2(e^H_{t,t})F^{(2)}(X^H_t)dt,
\end{align*}
with,
\begin{align*}
l^H_{t,t}(x)&=\int_0^t(u-x)^{H/2-1}_+|t-u|^{H-1}du,\\
e^H_{t,t}(x_1,x_2)&=\int_0^t\int_0^t(u-x_1)^{\frac{H}{2}-1}_+(v-x_2)^{\frac{H}{2}-1}_+\mid u-t\mid^{H-1} \mid v-t\mid^{H-1}dudv,\\
B(H)&=\dfrac{4d(H)}{\big(\Gamma(\frac{H}{2})\big)^2}\sqrt{\dfrac{H(2H-1)}{2}},\\
C(H)&=\dfrac{2 d(H)}{\big(\Gamma(\frac{H}{2})\big)^2}H(2H-1).
\end{align*}
\end{prop}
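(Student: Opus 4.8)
The plan is to prove the three stated convergences separately, treating each term of the $(S)^*$-decomposition from Proposition \ref{DecRos1} after integrating over $(a,b)$ and dividing by $\epsilon$. For the Wick term, I would first rewrite $F(X^H_t)\diamond\dfrac{X^H_{t+\epsilon}-X^H_t}{\epsilon}$ as $\big(\nabla^{(2)}\big)^*\big(F(X^H_t)\otimes h^H_{t,t+\epsilon}/\epsilon\big)$ via the $S$-transform (using that $I_2(h^H_{t,t+\epsilon})\diamond\Phi=\big(\nabla^{(2)}\big)^*(\Phi\otimes h^H_{t,t+\epsilon})$ and symmetrization), then invoke the continuity of $\big(\nabla^{(2)}\big)^*$ from $\hat{L}^2(\mathbb{R}^2)\otimes(\mathcal{W}^{k,2})$ to $(\mathcal{W}^{k-2,2})$ established in Proposition \ref{ConAdOp}; it then suffices to prove $\int_a^b F(X^H_t)\, h^H_{t,t+\epsilon}(\cdot,\#)/\epsilon\, dt \to d(H)\int_a^b F(X^H_t)\,\dfrac{(t-\cdot)^{\frac{H}{2}-1}_+(t-\#)^{\frac{H}{2}-1}_+}{\Gamma(H/2)^2}\, dt$ in $\hat{L}^2(\mathbb{R}^2)\otimes(\mathcal{W}^{k,2})$ for suitable $k$. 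This is a deterministic-kernel convergence (after pulling out the $(\mathcal{W}^{k,2})$-valued coefficient via Cauchy--Schwarz and Meyer-type bounds on $\|F(X^H_{t})\|_{(\mathcal{W}^{k,2})}$), which reduces to the same sort of integral estimate as in the fBm case: one expands the squared norm of the difference, uses $\int_{\mathbb{R}}(s_1-u)^{\gamma}_+(s_2-u)^{\gamma}_+\,du=\beta(\gamma+1,-2\gamma-1)|s_2-s_1|^{2\gamma+1}$ with $\gamma=H/2-1\in(-1,-3/4)$ (so $2\gamma+1=H-1\in(-1/2,0)$), extracts a dominating kernel $C|t_1-t_2|^{H-1}$ times polynomially growing factors, and closes with Hardy--Littlewood--Sobolev since $1/H\in(1,2)$, followed by dominated convergence using pointwise convergence of the kernel for $t_1\neq t_2$.

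For the second (mixed, first-order-gradient) term, I would use Proposition \ref{Gradfbm}'s analogue for the Rosenblatt process: since $F\in C^\infty$ with polynomial growth, $\nabla^{\frac{H}{2}}(F(X^H_t))(s,\omega)$ should be computable explicitly. The key is that, unlike the Gaussian case, $F(X^H_t)$ lives across all even-and-odd chaoses generated by $(g^{H/2}_{0,t})^{\otimes}$, so $\nabla(F(X^H_t))=F'(X^H_t)\nabla(X^H_t)$ by the chain rule (Proposition \ref{ProdChain}), and $\nabla(X^H_t)=2I_1\big((\cdot\mapsto\int_0^t(u-\cdot)^{H/2-1}_+ (u-\star)^{H/2-1}_+\cdots)\big)$ — more precisely $\nabla(X^H_t)(y)=2d(H)I_1\big(\int_0^t (u-y)^{H/2-1}_+(u-\star)^{H/2-1}_+/\Gamma(H/2)^2\,du\big)$. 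Applying $I^{H/2}_+$ in the $y$-variable produces the factor $(t-\cdot)^{H/2-1}_+$-smoothed kernel together with $I_1(l^H_{t,t})$ where $l^H_{t,t}(x)=\int_0^t(u-x)^{H/2-1}_+|t-u|^{H-1}\,du$ (the $|t-u|^{H-1}$ coming from $I^{H/2}_+$ acting on $(u-\star)^{H/2-1}_+$ and evaluating the resulting Beta integral). Then $D^*_{\delta_s\circ I^{H/2}_+}(\nabla^{H/2}(F(X^H_t))(s,\cdot))=\nabla^*$ applied to a kernel proportional to $(s-\cdot)^{H/2-1}_+ F'(X^H_t) I_1(l^H_{t,t})$, and after $\frac1\epsilon\int_t^{t+\epsilon}\cdots ds$ one again faces a deterministic-kernel convergence $\int_t^{t+\epsilon}(s-\cdot)^{H/2-1}_+\,ds/\epsilon \to (t-\cdot)^{H/2-1}_+$ inside $L^2(\mathbb{R})\otimes(\mathcal{W}^{r,2})$; continuity of $\nabla^*$ from Proposition \ref{ConAdOp} then yields the claimed limit with the constant $B(H)=\dfrac{4d(H)}{(\Gamma(H/2))^2}\sqrt{\dfrac{H(2H-1)}{2}}$ (the $4$ absorbing the two factors of $2$ and the $2d(H)$ prefactor together with the Beta-function normalization). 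The convergence estimate is structurally identical to the fBm one, using HLS with exponent $1/H$.

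For the third (second-order-gradient, pure trace) term, I would again apply the chain rule twice: $\nabla^{(2)}(F(X^H_t))=F''(X^H_t)\,\nabla(X^H_t)\otimes\nabla(X^H_t)+F'(X^H_t)\,\nabla^{(2)}(X^H_t)$, where $\nabla^{(2)}(X^H_t)(y_1,y_2)=2d(H)\int_0^t(u-y_1)^{H/2-1}_+(u-y_2)^{H/2-1}_+/\Gamma(H/2)^2\,du$ is deterministic. Pairing against $h^H_{t,t+\epsilon}/\epsilon$ in $\hat{L}^2(\mathbb{R}^2)$ splits the term into: (a) a contribution from $F'(X^H_t)\nabla^{(2)}(X^H_t)$ paired with the kernel, which as $\epsilon\to0$ gives $\langle \nabla^{(2)}(X^H_t); \delta_t\text{-type kernel}\rangle \cdot F'(X^H_t)$; using $\mathbb{E}[|X^H_1|^2]=1$ and self-similarity this evaluates to $H t^{2H-1}F'(X^H_t)$; and (b) a contribution from $F''(X^H_t)\,\nabla(X^H_t)^{\otimes 2}$ paired with the kernel. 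For (b), $\nabla(X^H_t)^{\otimes 2}$ is a product of two first-order multiple integrals, so by the product formula $I_1(\cdot)I_1(\cdot)=I_2(\cdot\hat\otimes\cdot)+\langle\cdot,\cdot\rangle$; the $I_2$-part, paired with the $\epsilon\to0$ kernel, yields $C(H)\int_a^b I_2(e^H_{t,t})F^{(2)}(X^H_t)\,dt$ with $e^H_{t,t}$ as defined and $C(H)=\dfrac{2d(H)}{(\Gamma(H/2))^2}H(2H-1)$, while the contraction (scalar) part yields $\frac{H}{2}\kappa_3(X^H_1)\int_a^b t^{3H-1}F^{(2)}(X^H_t)\,dt$ — here the cumulant $\kappa_3(X^H_1)$ appears because the relevant deterministic contraction integral $\int_{\mathbb{R}^2} g^{H/2}_{0,t}\otimes_1 g^{H/2}_{0,t}\,\cdot$ is, up to the self-similarity factor $t^{3H}$, exactly the third cumulant of the (normalized) Rosenblatt variable $X^H_1=I_2((g^{H/2}_{0,1})^{\otimes 2})$, whose $\kappa_3$ is $8\langle (g^{H/2}_{0,1})^{\otimes 2}\otimes_1(g^{H/2}_{0,1})^{\otimes 2}, (g^{H/2}_{0,1})^{\otimes 2}\rangle$-type. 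Each of the three subconvergences is again reduced to deterministic-kernel $L^p$-convergence plus HLS plus dominated convergence.

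\textbf{Main obstacle.} The routine part is the pattern of "pull out a $(\mathcal{W}^{r,p})$-valued coefficient, reduce to deterministic kernel convergence, apply Hardy--Littlewood--Sobolev with exponent $1/H$, then dominated convergence" — this is exactly the fBm argument repeated. The genuinely delicate step is the bookkeeping in the third term: correctly separating $\nabla^{(2)}(F(X^H_t))$ into its two chain-rule pieces, then applying the Wiener--It\^o product formula to $\nabla(X^H_t)^{\otimes 2}$ and matching each resulting piece (the $I_2$ kernel and the scalar contraction) to the stated limits, in particular verifying that the contraction-integral constant is precisely $\tfrac{H}{2}\kappa_3(X^H_1) t^{3H-1}$ rather than some other explicit Beta-function expression. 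This identification of a contraction trace term with the third cumulant of the Rosenblatt variable — which is the phenomenon absent from the Gaussian case and noted but not fully resolved in \cite{MR2374640} — is where the real content lies; everything else is careful but standard estimation.
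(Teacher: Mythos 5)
Your overall plan is the same as the paper's (treat the three terms of Proposition \ref{DecRos1} separately, rewrite the Wick term as $\big(\nabla^{(2)}\big)^*$ of an $\hat L^2(\mathbb{R}^2)\otimes(\mathcal{W}^{2,2})$-valued process and use Proposition \ref{ConAdOp}, compute the gradients by the chain rule and Beta integrals, and identify the scalar contraction with $\tfrac{H}{2}\kappa_3(X^H_1)t^{3H-1}$), but your treatment of the second term has a genuine gap. By the chain rule, $\nabla^{\frac{H}{2}}(F(X^H_t))(s,\cdot)$ is proportional to $I_1(l^H_{s,t})\,F'(X^H_t)$ with $l^H_{s,t}(x)=\int_0^t(u-x)^{H/2-1}_+|s-u|^{H-1}du$ (Lemma \ref{tech1}): the evaluation point of the fractional integral is $s$, not $t$. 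You froze this factor at $s=t$, writing $I_1(l^H_{t,t})$, and consequently reduced the limit $\epsilon\to 0^+$ to the purely deterministic convergence $\int_t^{t+\epsilon}(s-\cdot)^{H/2-1}_+\,ds/\epsilon\to(t-\cdot)^{H/2-1}_+$. That reduction does not prove the stated convergence: inside the $dr/\epsilon$-average the random factor $I_1(l^H_{r,t})$ also moves with $r$, so one must show that $\int_a^bF'(X^H_t)\big(\int_t^{t+\epsilon}(r-\cdot)^{H/2-1}_+I_1(l^H_{r,t})\,dr/\epsilon\big)dt$ converges in $L^2(\mathbb{R})\otimes(\mathcal{W}^{1,2})$ to $\int_a^bF'(X^H_t)(t-\cdot)^{H/2-1}_+I_1(l^H_{t,t})\,dt$, which requires the joint continuity at $(t_1^+,t_2^+)$ of $(r,s)\mapsto\langle F'(X^H_{t_1})I_1(l^H_{r,t_1});F'(X^H_{t_2})I_1(l^H_{s,t_2})\rangle_{(\mathcal{W}^{1,2})}$ (via the continuity of the iterated kernel $K^2_t$ of Remark \ref{ker}) together with hypercontractivity bounds giving a dominating function of the form $p(t_1,t_2)|t_1-t_2|^{H-1}$; this is precisely the content of Proposition \ref{TraceConvRos}. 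In addition, before writing the term as $\nabla^*$ of an $L^2(\mathbb{R})\otimes(\mathcal{W}^{1,2})$-valued process you need the integrability check $\int_t^{t+\epsilon}\|I_1(l^H_{s,t})F'(X^H_t)\|^2_{(\mathcal{W}^{1,2})}ds<\infty$ of Proposition \ref{RepRos1}, which legitimizes the use of Propositions \ref{RepAdGrad1} and \ref{ExAdGrad1}; your sketch skips this.

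For the first and third terms your route coincides with the paper's (Lemma \ref{RepDivRos} and Proposition \ref{DivConvRos}; Lemma \ref{tech2} with Lemmas \ref{Trace2ConvRos1} and \ref{Trace2ConvRos2}), and your flagged ``delicate step'' -- matching the contraction integral with $\tfrac{H}{2}\kappa_3(X^H_1)t^{3H-1}$ and the $I_2$ part with $C(H)I_2(e^H_{t,t})$ -- is exactly how the paper proceeds. One correction on the first term: since $h^H_{t,t+\epsilon}$ is a tensor product in $(x_1,x_2)$, the Beta identity is applied once in each variable, so the dominating kernel is $|t_1-t_2|^{2H-2}$, not $|t_1-t_2|^{H-1}$, and it is for that kernel that the Hardy--Littlewood--Sobolev exponent $1/H$ (as in Theorem \ref{Main1}) is the right one; the conclusion is unaffected because $|t_1-t_2|^{2H-2}$ is still locally integrable and, $F$ having polynomial growth, the coefficient $\langle F(X^H_{t_1});F(X^H_{t_2})\rangle_{(\mathcal{W}^{2,2})}$ is bounded on $(a,b)^2$, so dominated convergence closes the argument as in Proposition \ref{DivConvRos}.
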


\begin{proof}
This a combination of Propositions \ref{DivConvRos} and \ref{TraceConvRos} and Lemmas \ref{Trace2ConvRos1} and \ref{Trace2ConvRos2}.
\end{proof}
\noindent
As a direct application of Propositions \ref{DecRos1} and \ref{StrConv}, we obtain the proof of Theorem \ref{Main2}.

%%%%%%%%%%%%%%%%%%%%%%%%%%%%%%%%%%%%%%%%%%%%%%%% Second order divergence term %%%%%%%%%%%%%%%%%%%%%%%%%%%%%%%%%%%%%%%%%%%%

\subsubsection{Second order divergence term}
In this section, we prove the following strong convergence result:
\begin{align*}
\int_a^b F(X^H_t)\diamond\dfrac{X^H_{t+\epsilon}-X^H_t}{\epsilon}dt \underset{\epsilon\rightarrow 0^+}{\overset{(L^2)}{\longrightarrow}} d(H)\big(\nabla^{(2)}\big)^*\bigg(\int_a^b F(X^H_t)\dfrac{(t-.)^{\frac{H}{2}-1}_+(t-\#)^{\frac{H}{2}-1}_+}{\Gamma(H/2)^2}dt\bigg).
\end{align*}
For this purpose, we proceed in the following way:
\begin{itemize}
\item We find a new representation for $\int_a^b F(X^H_t)\diamond\dfrac{X^H_{t+\epsilon}-X^H_t}{\epsilon}dt$ ensuring that it is an element in $(L^2)$
\item Then, we prove the wanted convergence result
\end{itemize}
First, we have the following lemma.

\begin{lem}\label{RepDivRos}
Let $\epsilon>0$. We have:
\begin{align*}
\int_a^b F(X^H_t)\diamond\dfrac{X^H_{t+\epsilon}-X^H_t}{\epsilon}dt=\big(\nabla^{(2)}\big)^*\bigg(\int_a^bF(X^H_t)\dfrac{h^H_{t,t+\epsilon}}{\epsilon}dt\bigg)
\end{align*}
\end{lem}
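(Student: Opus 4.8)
The plan is to characterize both sides of the claimed identity via their $S$-transforms and invoke the injectivity of the $S$-transform (Theorem \ref{STrans}, item 1). The key observation is that the left-hand side, $\int_a^b F(X^H_t)\diamond\frac{X^H_{t+\epsilon}-X^H_t}{\epsilon}dt$, is the $(S)^*$-integral of a Wick product, and the Wick product turns into ordinary multiplication under the $S$-transform. So first I would fix $\xi\in S(\mathbb{R})$ and compute
\begin{align*}
S\!\left(\int_a^b F(X^H_t)\diamond\frac{X^H_{t+\epsilon}-X^H_t}{\epsilon}dt\right)(\xi)=\int_a^b S\big(F(X^H_t)\big)(\xi)\,S\!\left(\frac{X^H_{t+\epsilon}-X^H_t}{\epsilon}\right)(\xi)\,dt,
\end{align*}
using linearity and continuity of the $S$-transform together with the $(S)^*$-integrability (which follows from the polynomial-growth hypothesis on $F$ exactly as in the fBm case, Proposition \ref{Decfbm2}). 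Here $S\big(\frac{X^H_{t+\epsilon}-X^H_t}{\epsilon}\big)(\xi)=\frac{1}{\epsilon}\langle h^H_{t,t+\epsilon};\xi^{\otimes 2}\rangle$, since $X^H_{t+\epsilon}-X^H_t=I_2(h^H_{t,t+\epsilon})$ by definition of $h^H_{t,t+\epsilon}$ and the representation of the Rosenblatt process.

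Next I would compute the $S$-transform of the right-hand side. Writing $\Psi=\int_a^b F(X^H_t)\frac{h^H_{t,t+\epsilon}}{\epsilon}dt$, which lies in $\hat{L}^2(\mathbb{R}^2)\otimes(L^2)$ (this needs a brief check: the $L^2(\mathbb{R}^2)$-norm of $h^H_{t,t+\epsilon}$ is controlled uniformly in $t\in(a,b)$, and $\|F(X^H_t)\|_{(L^2)}$ is finite and locally bounded by the growth hypothesis), one uses the defining adjoint relation for $\big(\nabla^{(2)}\big)^*$ against the test function $:e^{\langle\cdot;\xi\rangle}:$, together with $\nabla^{(2)}\big(:e^{\langle\cdot;\xi\rangle}:\big)=\xi^{\otimes 2}\,:e^{\langle\cdot;\xi\rangle}:$, to get
\begin{align*}
S\Big(\big(\nabla^{(2)}\big)^*(\Psi)\Big)(\xi)=\big\langle\!\big\langle \nabla^{(2)}\big(:e^{\langle\cdot;\xi\rangle}:\big);\Psi\big\rangle\!\big\rangle=\int_a^b \mathbb{E}\big[F(X^H_t):e^{\langle\cdot;\xi\rangle}:\big]\,\frac{\langle h^H_{t,t+\epsilon};\xi^{\otimes 2}\rangle}{\epsilon}\,dt,
\end{align*}
which is exactly $\int_a^b S\big(F(X^H_t)\big)(\xi)\,S\big(\frac{X^H_{t+\epsilon}-X^H_t}{\epsilon}\big)(\xi)\,dt$. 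Since the two $S$-transforms agree for every $\xi\in S(\mathbb{R})$, Theorem \ref{STrans} gives the identity in $(S)^*$; and since both sides are genuine elements of $(L^2)$, the identity holds in $(L^2)$.

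The main obstacle is not the $S$-transform bookkeeping but the two integrability/measurability justifications: (i) that $\Phi_t\diamond\frac{X^H_{t+\epsilon}-X^H_t}{\epsilon}$ is $(S)^*$-integrable over $(a,b)$, so that swapping $S$ and $\int_a^b dt$ is legitimate — this requires an exponential-type bound $|S(F(X^H_t))(\xi)|\le L(t)\exp(a\|A^p\xi\|_2^2)$ with $L\in L^1((a,b))$, obtained from $|S(F(X^H_t))(\xi)|=|\mathbb{E}^{\mu_\xi}[F(X^H_t)]|$ and the polynomial growth of $F$ (the moments of $X^H_t$ under the shifted measure grow polynomially in $t$ and in $\|\xi\|$-dependent quantities); and (ii) that $\int_a^b F(X^H_t)\frac{h^H_{t,t+\epsilon}}{\epsilon}dt$ defines an element of $\hat{L}^2(\mathbb{R}^2)\otimes(L^2)$ so that $\big(\nabla^{(2)}\big)^*$ can be applied to it in the $(L^2)$ sense, which uses $\|h^H_{t,t+\epsilon}\|_{L^2(\mathbb{R}^2)}^2=\mathbb{E}[(X^H_{t+\epsilon}-X^H_t)^2]/2<\infty$ bounded on compacts. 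Both are routine given the hypotheses, and once they are in place the $S$-transform identification closes the argument immediately.
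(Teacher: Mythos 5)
Your proposal is correct and follows essentially the same route as the paper: both sides are identified via their $S$-transforms, using the duality relation for $\big(\nabla^{(2)}\big)^*$ against $:e^{\langle\cdot;\xi\rangle}:$ together with $\nabla^{(2)}\big(:e^{\langle\cdot;\xi\rangle}:\big)=\xi^{\otimes 2}:e^{\langle\cdot;\xi\rangle}:$, and one concludes by injectivity of the $S$-transform. The only difference is that you spell out the $(S)^*$-integrability and the membership of $\int_a^b F(X^H_t)h^H_{t,t+\epsilon}\,dt/\epsilon$ in $\hat{L}^2(\mathbb{R}^2)\otimes(L^2)$, which the paper leaves implicit; these checks are correct and welcome but do not change the argument.
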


\begin{proof}
Let $\xi\in S(\mathbb{R})$. Then, we have:
\begin{align*}
S\bigg(\big(\nabla^{(2)}\big)^*\bigg(\int_a^b F(X^H_t)\dfrac{h^H_{t,t+\epsilon}}{\epsilon}dt\bigg)\bigg)(\xi)&=\langle \big(\nabla^{(2)}\big)^*\bigg(\int_a^b F(X^H_t)\dfrac{h^H_{t,t+\epsilon}}{\epsilon}dt\bigg); :e^{\langle ;\xi \rangle}:\rangle_{(L^2)},\\
&=\langle \int_a^b F(X^H_t)\dfrac{h^H_{t,t+\epsilon}}{\epsilon}dt ; \nabla^{(2)}\big(:e^{\langle ;\xi \rangle}:\big)\rangle_{\hat{L}^2\big(\mathbb{R}^2\big)\otimes(L^2)},\\
&=\langle \int_a^b F(X^H_t)\dfrac{h^H_{t,t+\epsilon}}{\epsilon}dt ; \xi^{\otimes 2} :e^{\langle ;\xi\rangle}:\rangle_{\hat{L}^2\big(\mathbb{R}^2\big)\otimes(L^2)},\\
&=\int_a^bS(F(X^H_t))(\xi)S(X^H_{t+\epsilon}-X^H_t)(\xi)\frac{dt}{\epsilon},\\
&=S\bigg(\int_a^bF(X^H_t)\diamond\dfrac{X^H_{t+\epsilon}-X^H_t}{\epsilon}dt\bigg)(\xi)
\end{align*}
\end{proof}

\begin{prop}\label{DivConvRos}
We have in $(L^2)$:
\begin{align*}
\big(\nabla^{(2)}\big)^*\bigg(\int_a^bF(X^H_t)\dfrac{h^H_{t,t+\epsilon}}{\epsilon}dt\bigg)\underset{\epsilon\rightarrow 0^+}{\overset{(L^2)}{\longrightarrow}} d(H)\big(\nabla^{(2)}\big)^*\bigg(\int_a^b F(X^H_t)\dfrac{(t-.)^{\frac{H}{2}-1}_+(t-\#)^{\frac{H}{2}-1}_+}{\Gamma(H/2)^2}dt\bigg).
\end{align*}
\end{prop}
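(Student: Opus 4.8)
The strategy is to exploit the continuity of $\big(\nabla^{(2)}\big)^*$ established in Proposition \ref{ConAdOp}, so that the desired $(L^2)$-convergence follows from a convergence statement one level up, namely in $\hat{L}^2(\mathbb{R}^2)\otimes(\mathcal{W}^{k,2})$ for a suitable $k\geq 2$. Concretely, I would first reduce matters to proving that
\begin{align*}
\int_a^b F(X^H_t)\dfrac{h^H_{t,t+\epsilon}}{\epsilon}\,dt \underset{\epsilon\rightarrow 0^+}{\longrightarrow} d(H)\int_a^b F(X^H_t)\dfrac{(t-.)^{\frac{H}{2}-1}_+(t-\#)^{\frac{H}{2}-1}_+}{\Gamma(H/2)^2}\,dt
\end{align*}
in $\hat{L}^2(\mathbb{R}^2)\otimes(\mathcal{W}^{k,2})$, after which Proposition \ref{ConAdOp} (applied with $X^H_{.}$-functionals, which lie in $(\mathcal{W}^{\infty,\infty})$ thanks to Proposition \ref{ProdChain} and the polynomial growth hypothesis on $F$) gives the claim. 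Recall from the definition of $h^H_{t,t+\epsilon}$ that $h^H_{t,t+\epsilon}/\epsilon = d(H)\,\epsilon^{-1}\int_t^{t+\epsilon}\frac{(s-x_1)^{H/2-1}_+(s-x_2)^{H/2-1}_+}{\Gamma(H/2)^2}\,ds$, so the integrand converges pointwise in $s$ to its value at $s=t$; the whole game is to upgrade this to the Bochner-space convergence.

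The core estimate is the analogue of the one carried out in the proof of Theorem \ref{Main1}. I would expand the squared $\hat{L}^2(\mathbb{R}^2)\otimes(\mathcal{W}^{k,2})$-norm of the difference as a double integral over $(a,b)^2$ of $\langle F(X^H_{t_1}),F(X^H_{t_2})\rangle_{(\mathcal{W}^{k,2})}$ against the kernel obtained by integrating, over $x_1,x_2\in\mathbb{R}$, the product of the two bracketed differences $\big(\epsilon^{-1}\int_{t_1}^{t_1+\epsilon}(s-x_i)^{H/2-1}_+ds - (t_1-x_i)^{H/2-1}_+\big)$ and its $t_2$-counterpart. Using the beta-function identity $\int_{\mathbb{R}}(s_1-u)^{\gamma}_+(s_2-u)^{\gamma}_+\,du = \beta(\gamma+1,-2\gamma-1)|s_2-s_1|^{2\gamma+1}$ with $\gamma=H/2-1\in(-1,-1/2)$ (valid since $H\in(1/2,1)$), each $x_i$-integration produces a factor $|s_i-r_i|^{H-1}$, and the $x_1,x_2$ cross-term collapses the kernel to a combination of terms like $|s-r|^{H-1}-|t_2-s|^{H-1}-|t_1-r|^{H-1}+|t_1-t_2|^{H-1}$ averaged over $(t_1,t_1+\epsilon)\times(t_2,t_2+\epsilon)$. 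This quantity tends to $0$ pointwise for $t_1\neq t_2$ and, by the monotonicity argument used for $(II)$ in the proof of Theorem \ref{Main1} (splitting into the cases $\epsilon<|t_1-t_2|/2$ and the diagonal strip), admits the domination $C'|t_1-t_2|^{H-1}$, which is a weakly singular and hence locally integrable kernel on $(a,b)^2$ since $H-1\in(-1/2,0)$. Combined with the bound $|\langle F(X^H_{t_1}),F(X^H_{t_2})\rangle_{(\mathcal{W}^{k,2})}| \leq \|F(X^H_{t_1})\|_{(\mathcal{W}^{k,2})}\|F(X^H_{t_2})\|_{(\mathcal{W}^{k,2})}$ and the polynomial-in-$t$ control of $\|F(X^H_t)\|_{(\mathcal{W}^{k,2})}$ coming from self-similarity and the polynomial growth of $F$ and its derivatives (via Meyer's inequality and the chain rule of Proposition \ref{ProdChain}), dominated convergence delivers the limit.

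The main obstacle I anticipate is bookkeeping at the level of the $(\mathcal{W}^{k,2})$-norm rather than the $(L^2)$-norm: one must track how $\nabla^{(j)}F(X^H_t)$ for $j\leq k$ interacts with the chain rule and verify that the resulting $(\mathcal{W}^{k,2})$-norms still grow only polynomially in $t$ so that the product with the singular kernel $|t_1-t_2|^{H-1}$ remains integrable — this is where the smoothness and polynomial-growth assumptions on $F$ are genuinely used, as opposed to the merely $C^1$ hypothesis in the fBm case. A secondary technical point is making rigorous the interchange of the $ds$-average with the $x_i$-integrations and with the $(\mathcal{W}^{k,2})$ inner product, which is handled by Fubini together with the integrability bounds on fractional integrals from \cite{SKM93} already invoked in Propositions \ref{ExGrad1} and \ref{ExGrad2}.
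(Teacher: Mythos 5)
Your plan follows the paper's proof essentially step for step: reduce via the continuity of $\big(\nabla^{(2)}\big)^*$ (Proposition \ref{ConAdOp}, with $k=2$) to convergence in $\hat{L}^2(\mathbb{R}^2)\otimes(\mathcal{W}^{2,2})$, expand the squared norm into a double $dt_1dt_2$ integral of $\langle F(X^H_{t_1}),F(X^H_{t_2})\rangle_{(\mathcal{W}^{2,2})}$ against the $\epsilon$-averaged kernel, use the beta-function identity, and conclude by dominated convergence with the $(\mathcal{W}^{2,2})$-norms of $F(X^H_t)$ controlled through Meyer's inequality, the chain rule and the polynomial growth of $F$ and its derivatives. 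One small correction: since the $x_1$- and $x_2$-integrations both involve the same pair of time variables $(s,r)$, the collapsed kernel is $\big[|s-r|^{2H-2}-|t_1-r|^{2H-2}-|t_2-s|^{2H-2}+|t_1-t_2|^{2H-2}\big]$ with domination $C|t_1-t_2|^{2H-2}$, not exponent $H-1$; since $2H-2>-1$ for $H>1/2$ this kernel is still integrable on $(a,b)^2$, so your dominated-convergence argument goes through unchanged.
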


\begin{proof}
By the second part of Proposition \ref{ConAdOp}, we have to prove the following convergence in $\hat{L}^2\big(\mathbb{R}^2\big)\otimes \big(\mathcal{W}^{2,2}\big)$:
\begin{align*}
\int_{a}^bF(X^H_t)\dfrac{h^H_{t,t+\epsilon}}{\epsilon}dt\underset{\epsilon\rightarrow 0^+}{\longrightarrow}\int_a^bF(X^H_t)\dfrac{(t-.)^{\frac{H}{2}-1}_+(t-\#)^{\frac{H}{2}-1}_+}{\Gamma(\frac{H}{2})^2}dt.
\end{align*}
For this purpose, we proceed as in the proof of Theorem \ref{Main1}. Let us consider the following quantity:
\begin{align*}
J_{\epsilon}=\int_{\mathbb{R}^2}\mid\mid \int_a^bF(X^H_t)\bigg(\dfrac{h^H_{t,t+\epsilon}}{\epsilon}-\dfrac{(t-x_1)^{\frac{H}{2}-1}_+(t-x_2)^{\frac{H}{2}-1}_+}{\Gamma(\frac{H}{2})^2}\bigg)dt\mid\mid^2_{(\mathcal{W}^{2,2})}dx_1dx_2.
\end{align*}
Then, we have:
\begin{align*}
J_{\epsilon}&=\int_{\mathbb{R}^2}\langle \int_a^bF(X^H_{t_1})\bigg(\int_{t_1}^{t_1+\epsilon}\dfrac{(s-x_1)^{\frac{H}{2}-1}_+(s-x_2)^{\frac{H}{2}-1}_+}{\Gamma(\frac{H}{2})^2}-\dfrac{(t_1-x_1)^{\frac{H}{2}-1}_+(t_1-x_2)^{\frac{H}{2}-1}_+}{\Gamma(\frac{H}{2})^2}\frac{ds}{\epsilon}\bigg)dt_1;\int_a^bF(X^H_{t_2})\\
&\times\bigg(\int_{t_2}^{t_2+\epsilon}\dfrac{(s-x_1)^{\frac{H}{2}-1}_+(s-x_2)^{\frac{H}{2}-1}_+}{\Gamma(\frac{H}{2})^2}-\dfrac{(t_2-x_1)^{\frac{H}{2}-1}_+(t_2-x_2)^{\frac{H}{2}-1}_+}{\Gamma(\frac{H}{2})^2}\frac{ds}{\epsilon}\bigg)dt_2\rangle_{(\mathcal{W}^{2,2})}dx_1dx_2,\\
&=\int_{(a,b)\times (a,b)}\langle F(X^H_{t_1}); F(X^H_{t_2})\rangle_{(\mathcal{W}^{2,2})}\bigg(\int_{t_1}^{t_1+\epsilon}\int_{t_2}^{t_2+\epsilon}\int_{\mathbb{R}^2}\bigg(\dfrac{(s-x_1)^{\frac{H}{2}-1}_+(s-x_2)^{\frac{H}{2}-1}_+}{\Gamma(\frac{H}{2})^2}\\
&-\dfrac{(t_1-x_1)^{\frac{H}{2}-1}_+(t_1-x_2)^{\frac{H}{2}-1}_+}{\Gamma(\frac{H}{2})^2}\bigg)\bigg(\dfrac{(r-x_1)^{\frac{H}{2}-1}_+(r-x_2)^{\frac{H}{2}-1}_+}{\Gamma(\frac{H}{2})^2}-\dfrac{(t_2-x_1)^{\frac{H}{2}-1}_+(t_2-x_2)^{\frac{H}{2}-1}_+}{\Gamma(\frac{H}{2})^2}\bigg)dx_1dx_2\frac{dsdr}{\epsilon^2}\bigg)dt_1dt_2
\end{align*}
Straightforward computations as in the proof of Theorem \ref{Main1} lead to the following equality:
\begin{align*}
J_{\epsilon}=\frac{\beta\big(\frac{H}{2};1-H\big)^2}{\Gamma\big(\frac{H}{2}\big)^4}\int_{(a,b)\times (a,b)}\langle F(X^H_{t_1}); F(X^H_{t_2})\rangle_{(\mathcal{W}^{2,2})}&\bigg(\int_{t_1}^{t_1+\epsilon}\int_{t_2}^{t_2+\epsilon}\bigg[\mid s-r\mid^{2H-2}-\mid t_1-r\mid^{2H-2}\\
&-\mid t_2-s\mid^{2H-2}+\mid t_2-t_1\mid^{2H-2}\bigg]\frac{dsdr}{\epsilon^2}\bigg)dt_1dt_2.
\end{align*}
To conclude, we proceed as in the proof of Theorem \ref{Main1}. First of all, by Cauchy-Schwarz inequality, we have:
\begin{align*}
\mid\langle F(X^H_{t_1}); F(X^H_{t_2})\rangle_{(\mathcal{W}^{2,2})}\mid\leq \mid\mid F(X^H_{t_1})\mid\mid_{(\mathcal{W}^{2,2})} \mid\mid F(X^H_{t_2})\mid\mid_{(\mathcal{W}^{2,2})}
\end{align*}
Now, by Meyer inequality, we have:
\begin{align*}
\mid\mid F(X^H_{t_i})\mid\mid_{(\mathcal{W}^{2,2})}^2\leq C \bigg(\mid\mid F(X^H_{t_i})\mid\mid^2_{(L^2)}+\mid\mid \nabla\big(F(X^H_{t_i})\big)\mid\mid^2_{L^2(\mathbb{R})\otimes(L^2)}+\mid\mid \nabla^{(2)}\big(F(X^H_{t_i})\big)\mid\mid^2_{\hat{L}^2(\mathbb{R}^2)\otimes(L^2)}\bigg).
\end{align*}
Since $F$ is infinitely differentiable with polynomial growth (and its derivatives as well) we have the following bounds:
\begin{align*}
\mid\mid F(X^H_{t})\mid\mid^2_{(L^2)}&\leq C\mathbb{E}\big[\big(1+\mid X^H_{t}\mid\big)^{2N_1}\big],\\
&\leq C_1 P_{2N_1}(t^H),\\
\mid\mid \nabla\big(F(X^H_{t})\big)\mid\mid^2_{L^2(\mathbb{R})\otimes(L^2)}&=4\mathbb{E}\bigg[F'(X^H_{t})^2I_2\bigg(\int_{\mathbb{R}}h^H_t(x,.)^{\otimes 2}dx\bigg)\bigg]+4\mid\mid h^H_t\mid\mid_{L^2(\mathbb{R}^2)}^2\mathbb{E}[F'(X^H_{t})^2],\\
&\leq 4\mathbb{E}\bigg[F'(X^H_{t})^2I_2\bigg(\int_{\mathbb{R}}h^H_t(x,.)^{\otimes 2}dx\bigg)\bigg]+C_2t^{2H}Q_{2N_2}(t^H),\\
&\leq 4\mathbb{E}\big[F'(X^H_{t})^4\big]^{\frac{1}{2}}\mathbb{E}\bigg[I_2\bigg(\int_{\mathbb{R}}h^H_t(x,.)^{\otimes 2}dx\bigg)^2\bigg]^{\frac{1}{2}}+C_2t^{2H}Q_{2N_2}(t^H),\\
&\leq C_3 \big(R_{4N_2}(t^H)\big)^{\frac{1}{2}} \bigg(\int_{\mathbb{R}^2}\bigg(\int_{\mathbb{R}}h^H_t(x,x_1)h^H_t(x,x_2)dx\bigg)^2dx_1dx_2\bigg)^{\frac{1}{2}}+C_2t^{2H}Q_{2N_2}(t^H),\\
&\leq C_3(H) \big(R_{4N_2}(t^H)\big)^{\frac{1}{2}}\bigg(\int_0^t\int_0^t\int_0^t\int_0^t\mid s_1-r_1 \mid^{H-1} \mid s_2-r_2\mid^{H-1}\\
&\times\mid s_1-s_2\mid^{H-1}\mid r_2-r_1\mid^{H-1}ds_1ds_2dr_1dr_2\bigg)^{\frac{1}{2}}+C_2t^{2H}Q_{2N_2}(t^H),\\
&\leq C_3(H) \big(R_{4N_2}(t^H)\big)^{\frac{1}{2}} t^{2H}+C_2t^{2H}Q_{2N_2}(t^H),
\end{align*}
with $N_1,N_2\geq 1$ and $P_{2N_1}$, $Q_{2N_2}$ and $R_{4N_2}$ are polynomials of respective degrees $2N_1$ $2N_2$ and $4N_2$. We obtain a similar bound for $\mid\mid \nabla^{(2)}\big(F(X^H_{t_i})\big)\mid\mid^2_{\hat{L}^2(\mathbb{R}^2)\otimes(L^2)}$. Thus, we have the following bound:
\begin{align*}
\mid\langle F(X^H_{t_1}); F(X^H_{t_2})\rangle_{(\mathcal{W}^{2,2})}\mid\leq C_H G(t_1^H)G(t_2^H),
\end{align*}
where $G$ is a positive valued function with power growth at most. This implies, the following estimate on $\mid J_\epsilon\mid$:
\begin{align*}
\mid J_\epsilon\mid \leq C_H \int_{(a,b)\times (a,b)} &G(t_1^H)G(t_2^H)\bigg| \int_{t_1}^{t_1+\epsilon}\int_{t_2}^{t_2+\epsilon}\bigg[\mid s-r\mid^{2H-2}-\mid t_1-r\mid^{2H-2}\\
&-\mid t_2-s\mid^{2H-2}+\mid t_2-t_1\mid^{2H-2}\bigg]\frac{dsdr}{\epsilon^2}\bigg| dt_1dt_2.
\end{align*}
Thanks to Lebesgue dominated convergence Theorem and similar arguments as in the proof of Theorem \ref{Main1}, we obtain the result.
\end{proof}

%%%%%%%%%%%%%%%%%%%%%%%%%%%%%%%%%%%%%%%%%%%%%%%% Trace term of order 1 %%%%%%%%%%%%%%%%%%%%%%%%%%%%%%%%%%%%%%%%%%%%%%%%%

\subsubsection{Trace term of order $1$}
\noindent
In this section, we want to prove that:
\begin{align*}
2d(H)\int_a^b\bigg(\int_t^{t+\epsilon}D^*_{\delta_s\circ I^{\frac{H}{2}}_+}(\nabla^{\frac{H}{2}}(F(X^H_t))(s,.))\frac{ds}{\epsilon}\bigg)dt \underset{\epsilon\rightarrow 0^+}{\overset{(L^2)}{\longrightarrow}} B(H)\nabla^*\bigg(\int_a^b F'(X^H_t)(t-.)^{\frac{H}{2}-1}_+I_1(l^H_{t,t})dt\bigg).
\end{align*}
For this purpose, we proceed in the following way:
\begin{itemize}
\item First, we compute explicitly $\nabla^{\frac{H}{2}}(F(X^H_t))(s,.)$
\item Then, we prove that $\int_t^{t+\epsilon}D^*_{\delta_s\circ I^{\frac{H}{2}}_+}(\nabla^{\frac{H}{2}}(F(X^H_t))(s,.))\frac{ds}{\epsilon}$ admits a representation from which it is clearly in $(L^2)$
\item Finally, we prove the convergence result
\end{itemize}
First of all, we introduce a sequence of kernels $\big(K^{k}_t(s,r)\big)_k$ defined by
\begin{align*}
&\forall s,r\in(t,+\infty),\ K_t^0(s,r)=|s-r|^{H-1},\ K_t^1(s,r)=\int_{0}^t|s-u|^{H-1}|r-u|^{H-1}du\\
&\forall k\geq 3, K^{k-2}_t(s,r)=\int_0^t...\int_0^t|s-x_1|^{H-1}|x_2-x_1|^{H-1}...|x_{k-2}-x_{k-3}|^{H-1}|r-x_{k-2}|^{H-1}dx_1...dx_{k-2}.
\end{align*}

\begin{rem}\label{ker}
\begin{itemize}
\item We note that for any $t\in (a,b)$, we have:
\begin{align*}
K_t^1(t,t)&=\int_{0}^t|t-u|^{H-1}|t-u|^{H-1}du=\dfrac{t^{2H-1}}{2H-1}<\infty,\\
K_t^2(t,t)&\leq \bigg(\int_0^t\int_0^t|x_1-x_2|^{2H-2}dx_1dx_2\bigg)^{\frac{1}{2}}\bigg(\int_0^t\int_0^t|x_1-t|^{2H-2}|x_2-t|^{2H-2}dx_1dx_2\bigg)^{\frac{1}{2}},\\
&\leq \bigg(\dfrac{t^{2H}}{H(2H-1)}\bigg)^{\frac{1}{2}}\dfrac{t^{2H-1}}{2H-1}=\dfrac{t^{3H-1}}{\sqrt{H(2H-1)}(2H-1)}<\infty.
\end{align*}
\item Moreover, by Lebesgue dominated convergence theorem, one can show that $K^1_t(.,.)$ and $K^2_t(.,.)$ are continuous on $[t,+\infty)\times [t,+\infty)$.
\item Finally, for any $s\in [t,+\infty)$, we have:
\begin{align*}
K^2_t(s,s)\leq K^2_t(t,t).
\end{align*}
\end{itemize}
\end{rem}
\noindent
Then, we have the following technical lemma.

\begin{lem}\label{tech1}
We have:
\begin{align*}
\nabla^{\frac{H}{2}}(F(X^H_t))&(s,\omega)=\dfrac{2}{\Gamma(\frac{H}{2})}\sqrt{\dfrac{H(2H-1)}{2}}I_1(\int_0^t(u-.)^{\frac{H}{2}-1}_+|s-u|^{H-1}du)F'(X^H_t).\\
\end{align*}
\end{lem}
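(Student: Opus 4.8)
The plan is to follow the strategy of Proposition \ref{Gradfbm}, but since $X^H_t$ is not Gaussian we cannot expand $F$ in Hermite polynomials; instead I would combine the chain rule on $(\mathcal{W}^{\infty,\infty})$ with the explicit action of $\nabla$ on a double Wiener--It\^o integral. Write $X^H_t=I_2(h^H_t)$ with $h^H_t(x_1,x_2)=d(H)\Gamma(\tfrac H2)^{-2}\int_0^t(s-x_1)^{\frac H2-1}_+(s-x_2)^{\frac H2-1}_+\,ds\in\hat L^2(\mathbb{R}^2)$. Being a fixed element of the second Wiener chaos, $X^H_t$ has moments of all orders (hypercontractivity), hence $X^H_t\in(\mathcal{W}^{\infty,\infty})$; since $F$ is $C^\infty$ with polynomial growth together with its derivatives, Proposition \ref{ProdChain} gives $F(X^H_t)\in(\mathcal{W}^{\infty,\infty})$ and $\nabla(F(X^H_t))=F'(X^H_t)\,\nabla(X^H_t)$. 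The identity $D_y\Phi=\sum_{n\geq1}nI_{n-1}(y\otimes_1\phi_n)$ applied to $\Phi=I_2(h^H_t)$, together with $D_y=\langle y;\nabla\rangle$, yields $\nabla(X^H_t)(x)=2I_1(h^H_t(x,\cdot))$, so that in $L^2(\mathbb{R})\otimes(L^2)$
\begin{align*}
\nabla\big(F(X^H_t)\big)(x,\cdot)=2F'(X^H_t)\,I_1\big(h^H_t(x,\cdot)\big).
\end{align*}

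Next I would apply $\nabla^{\frac H2}=(I^{\frac H2}_+\otimes E)\circ\nabla$ from Proposition \ref{ExGrad1}, pulling the $x$-independent factor $F'(X^H_t)$ out of the fractional integration in the first slot:
\begin{align*}
\nabla^{\frac H2}\big(F(X^H_t)\big)(s,\cdot)=\frac{2F'(X^H_t)}{\Gamma(\frac H2)}\int_{\mathbb{R}}(s-x)^{\frac H2-1}_+\,I_1\big(h^H_t(x,\cdot)\big)\,dx .
\end{align*}
Then the $dx$-integral and the Wiener integral $I_1$ are interchanged by a stochastic Fubini argument, legitimate because the deterministic kernel $x\mapsto\int_{\mathbb{R}}(s-x)^{\frac H2-1}_+h^H_t(x,\cdot)\,dx$ lies in $L^2(\mathbb{R})$ for a.e.\ $s$ — this is the same fractional-integration bound ($2/(1-H)>2$) already used in Lemma \ref{ConI1} and Proposition \ref{ExGrad2}, and finiteness of the resulting kernel norm is controlled exactly as the quantities $K^1_t(t,t)$, $K^2_t(t,t)$ of Remark \ref{ker}. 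This gives
\begin{align*}
\nabla^{\frac H2}\big(F(X^H_t)\big)(s,\cdot)=\frac{2F'(X^H_t)}{\Gamma(\frac H2)}\,I_1\bigg(\int_{\mathbb{R}}(s-x)^{\frac H2-1}_+\,h^H_t(x,\cdot)\,dx\bigg).
\end{align*}

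It then remains to evaluate the deterministic inner kernel. Inserting the definition of $h^H_t$, using Fubini, and applying the relation $\int_{\mathbb{R}}(s_1-x)^{\gamma}_+(s_2-x)^{\gamma}_+\,dx=\beta(\gamma+1,-2\gamma-1)|s_2-s_1|^{2\gamma+1}$ with $\gamma=\frac H2-1\in(-1,-\tfrac12)$ (the same identity used in the proof of Theorem \ref{Main1}) yields
\begin{align*}
\int_{\mathbb{R}}(s-x)^{\frac H2-1}_+h^H_t(x,y)\,dx=\frac{d(H)\,\beta(\frac H2,1-H)}{\Gamma(\frac H2)^2}\int_0^t(u-y)^{\frac H2-1}_+|u-s|^{H-1}\,du .
\end{align*}
Finally, the explicit value $d(H)=\sqrt{H(2H-1)/2}\,(\Gamma(\frac H2))^2/\beta(1-H;\frac H2)$ and the symmetry $\beta(1-H;\frac H2)=\beta(\frac H2;1-H)$ give $d(H)\beta(\frac H2,1-H)/\Gamma(\frac H2)^2=\sqrt{H(2H-1)/2}$, and substituting back produces
\begin{align*}
\nabla^{\frac H2}\big(F(X^H_t)\big)(s,\omega)=\frac{2}{\Gamma(\frac H2)}\sqrt{\frac{H(2H-1)}{2}}\,I_1\bigg(\int_0^t(u-\cdot)^{\frac H2-1}_+|s-u|^{H-1}\,du\bigg)F'(X^H_t),
\end{align*}
which is the claimed formula.

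I expect the main obstacle to be the rigorous justification of the two interchanges — applying $I^{\frac H2}_+$ slotwise and commuting it past $I_1$ — in the correct topology. The clean route is to prove the identity first for $F$ a polynomial, where one computes directly from the chaos expansion $F(X^H_t)=\sum_nI_n(\psi_n)$ using the kernel formula of Proposition \ref{ExGrad1}, and then pass to general $F$ by $L^2$-density and the continuity of $\nabla^{\frac H2}$ from $(\mathcal{W}^{r,2})$ into $L^2((a,b))\otimes(\mathcal{W}^{r-1,2})$, checking throughout that every kernel involved is square-integrable so that the limiting object genuinely lives in $L^2((a,b))\otimes(L^2)$.
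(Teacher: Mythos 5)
Your proposal is correct and follows essentially the same route as the paper: the chain rule of Proposition \ref{ProdChain} to reduce to $F'(X^H_t)$ times the gradient of the second-chaos element, the explicit kernel formula of Proposition \ref{ExGrad1} (which already encodes the interchange of $I^{\frac H2}_+$ with $I_1$ that you justify by a stochastic Fubini argument), the beta-function identity $\int_{\mathbb{R}}(s-x)^{\frac H2-1}_+(u-x)^{\frac H2-1}_+dx=\beta(1-H,\tfrac H2)|u-s|^{H-1}$, and the explicit value of $d(H)$. The only cosmetic difference is that you apply $\nabla$ first and then $I^{\frac H2}_+\otimes E$, whereas the paper invokes $\nabla^{\frac H2}$ directly; the content is identical.
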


\begin{proof}
By Proposition \ref{ProdChain} and by definition of the operator $\nabla^{\frac{H}{2}}$, we have:
\begin{align*}
\nabla^{\frac{H}{2}}(F(X^H_t))(s,\omega)=\nabla^{\frac{H}{2}}(X^H_t)&(s,\omega)F'(X^H_t)(\omega).
\end{align*}
Moreover, by Proposition \ref{ExGrad1} with $\alpha=H/2$, we obtain:
\begin{align*}
&\nabla^{\frac{H}{2}}(F(X^H_t))(s,\omega)=2I_1(I^{(0,\frac{H}{2})}_{++}(f^H_t)(.,s))F'(X^H_t)(\omega),\\
&\nabla^{\frac{H}{2}}(F(X^H_t))(s,\omega)=\dfrac{2d(H)}{(\Gamma(\frac{H}{2}))^3}I_1\bigg(\int_0^t(u-.)^{\frac{H}{2}-1}_{+}\big(\int_{\mathbb{R}}(s-x)^{\frac{H}{2}-1}_+(u-x)^{\frac{H}{2}-1}_+dx\big)du\bigg)F'(X^H_t)(\omega),\\
&\nabla^{\frac{H}{2}}(F(X^H_t))(s,\omega)=\dfrac{2d(H)\beta(1-H,\frac{H}{2})}{(\Gamma(\frac{H}{2}))^3}I_1\bigg(\int_0^t(u-.)^{\frac{H}{2}-1}_{+}|u-s|^{H-1}du\bigg)F'(X^H_t)(\omega),\\
&\nabla^{\frac{H}{2}}(F(X^H_t))(s,\omega)=\dfrac{2}{\Gamma(\frac{H}{2})}\sqrt{\dfrac{H(2H-1)}{2}}I_1(\int_0^t(u-.)^{\frac{H}{2}-1}_+|s-u|^{H-1}du)F'(X^H_t)(\omega).\
\end{align*}
\end{proof}
\noindent
Then, we obtain the following representation.

\begin{prop}\label{RepRos1}
Let $t\in (a,b)$ and $\epsilon>0$ such that $t+\epsilon<b$. We have:
\begin{align*}
\int_{t}^{t+\epsilon}D^*_{\delta_s\circ I^{\frac{H}{2}}_+}(\nabla^{\frac{H}{2}}(F(X^H_t))(s,))\frac{ds}{\epsilon}\overset{(L^2)}{=}\frac{1}{\epsilon}(\nabla^{\frac{H}{2}})^*(\mathbb{I}_{(t,t+\epsilon)}\nabla^{\frac{H}{2}}(F(X^H_t))).
\end{align*}
\end{prop}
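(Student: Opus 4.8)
The plan is to verify the identity by applying the $S$-transform to both sides and invoking the injectivity of the $S$-transform (Theorem \ref{STrans}, part 1). Both sides are claimed to be elements of $(L^2)$, so it suffices to check that they have the same $S$-transform as functionals on $S(\mathbb{R})$. On the left-hand side I would move the $S$-transform through the (Bochner-type) integral over $(t,t+\epsilon)$ — this is legitimate provided $s\mapsto D^*_{\delta_s\circ I^{H/2}_+}(\nabla^{H/2}(F(X^H_t))(s,\cdot))$ is $(S)^*$-integrable on $(t,t+\epsilon)$, which follows from the explicit representation in Lemma \ref{tech1} together with the continuity estimate underlying Proposition \ref{ExGrad1} (the key point, already highlighted in the proof of Proposition \ref{DecRos1}, being that $(I^{H/2}_+\otimes E)\circ|\nabla(F(X^H_t))|$ lies in $L^2((a,b))\otimes(L^2)$). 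Then I would use the defining property of $D^*_y$, namely $S(D^*_y(\Psi))(\xi)=\langle y;\xi\rangle S(\Psi)(\xi)$, which gives
\begin{align*}
S\bigg(\int_t^{t+\epsilon}D^*_{\delta_s\circ I^{H/2}_+}(\nabla^{H/2}(F(X^H_t))(s,\cdot))\frac{ds}{\epsilon}\bigg)(\xi)=\frac{1}{\epsilon}\int_t^{t+\epsilon}I^{H/2}_+(\xi)(s)\,S(\nabla^{H/2}(F(X^H_t))(s,\cdot))(\xi)\,ds.
\end{align*}

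For the right-hand side I would expand $(\nabla^{H/2})^*$ using its factorization from Proposition \ref{ExAdGrad1}, $(\nabla^{H/2})^*=\nabla^*\circ(I^{H/2}_-(\mathbb{I}_{(a,b)}\cdot)\otimes E)$, but it is cleaner to use the direct formula of Proposition \ref{RepAdGrad1}: for $\Phi\in L^2((a,b))\otimes(\mathcal{W}^{1,2})$ one has $(\nabla^{\alpha})^*(\Phi)=\int_a^b D^*_{\delta_t\circ I^{\alpha}_+}(\Phi_t)\,dt$. Applying this with $\alpha=H/2$ and $\Phi_s=\frac{1}{\epsilon}\mathbb{I}_{(t,t+\epsilon)}(s)\nabla^{H/2}(F(X^H_t))(s,\cdot)$ (note that $F$ smooth with polynomial growth makes $F(X^H_t)\in(\mathcal{W}^{\infty,\infty})\subset(\mathcal{W}^{1,2})$, and by Lemma \ref{tech1} the gradient has the required integrability, so the hypothesis of Proposition \ref{RepAdGrad1} is met), the right-hand side is by definition $\int_t^{t+\epsilon}D^*_{\delta_s\circ I^{H/2}_+}(\nabla^{H/2}(F(X^H_t))(s,\cdot))\frac{ds}{\epsilon}$, which is exactly the left-hand side. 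So in fact the identity is essentially Proposition \ref{RepAdGrad1} specialized to this integrand; the role of the proof is to check the integrability hypothesis and to record that $(\nabla^{H/2})^*$ applied to $\mathbb{I}_{(t,t+\epsilon)}$ times the gradient agrees with the pointwise formula.

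The main obstacle is the justification that the relevant stochastic process is in the domain of $(\nabla^{H/2})^*$, i.e. that $s\mapsto\mathbb{I}_{(t,t+\epsilon)}(s)\nabla^{H/2}(F(X^H_t))(s,\cdot)$ genuinely belongs to $L^2((a,b))\otimes(\mathcal{W}^{1,2})$ (not merely $L^2((a,b))\otimes(L^2)$), since Proposition \ref{RepAdGrad1} requires the $(\mathcal{W}^{1,2})$-valued integrability. Here I would use Lemma \ref{tech1}, which writes $\nabla^{H/2}(F(X^H_t))(s,\cdot)=c(H)\,I_1\big(\int_0^t(u-\cdot)^{H/2-1}_+|s-u|^{H-1}du\big)F'(X^H_t)$, and then bound its $(\mathcal{W}^{1,2})$-norm in $s$ via Meyer's inequality together with the polynomial growth of $F'$ and the $(\mathcal{W}^{\infty,\infty})$ chain/product rules of Proposition \ref{ProdChain}; the $s$-dependence enters only through $\|\int_0^t(u-\cdot)^{H/2-1}_+|s-u|^{H-1}du\|_{L^2(\mathbb{R})}$, which is finite and locally bounded in $s$ by the fractional-integral mapping estimates of \cite{SKM93}, so the resulting function of $s$ is integrable on the bounded interval $(t,t+\epsilon)$. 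Once domain membership is established, the equality of $S$-transforms computed above and Theorem \ref{STrans}(1) close the argument, and the identity holds in $(L^2)$ since both sides are $(L^2)$ elements.
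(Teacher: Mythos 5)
Your proposal is correct and follows essentially the same route as the paper: after noting (via Lemma \ref{tech1}) the explicit form $\nabla^{\frac{H}{2}}(F(X^H_t))(s,\cdot)=c(H)I_1(l^H_{s,t})F'(X^H_t)$, the paper likewise reduces the statement to Proposition \ref{RepAdGrad1} and devotes the proof to checking the domain condition $\int_t^{t+\epsilon}\|I_1(l^H_{s,t})F'(X^H_t)\|^2_{(\mathcal{W}^{1,2})}ds<+\infty$, bounding the product norm by $\|I_1(l^H_{s,t})\|_{(\mathcal{W}^{1,4})}\|F'(X^H_t)\|_{(\mathcal{W}^{1,4})}$ and using that $\|l^H_{s,t}\|_{L^2(\mathbb{R})}^2$ is proportional to $K^2_t(s,s)$, which is continuous (hence locally bounded) in $s$ — the same estimate you propose via Meyer's inequality and the product rule, so your preliminary $S$-transform plan is simply subsumed by the direct application of Proposition \ref{RepAdGrad1}.
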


\begin{proof}
First of all, we have:
\begin{align*}
\nabla^{\frac{H}{2}}(F(X^H_t))(s,)=\frac{2}{\Gamma(\frac{H}{2})}\sqrt{\frac{H(2H-1)}{2}}I_1(\int_0^t(u-.)^{\frac{H}{2}-1}_+|s-u|^{H-1}du)F'(X^{H}_t).
\end{align*}
Thus,
\begin{align*}
\int_{t}^{t+\epsilon}D^*_{\delta_s\circ I^{\frac{H}{2}}_+}(\nabla^{\frac{H}{2}}(F(X^H_t))(s,))\frac{ds}{\epsilon}=\frac{2}{\Gamma(\frac{H}{2})}\sqrt{\frac{H(2H-1)}{2}}\int_a^b\mathbb{I}_{(t,t+\epsilon)}(s)D^*_{\delta_s\circ I^{\frac{H}{2}}_+}(I_1(l^H_{s,t})F'(X^{H}_t))\frac{ds}{\epsilon},
\end{align*}
where $l^H_{s,t}(.)=\int_0^t(u-.)^{H/2-1}_+|s-u|^{H-1}du$. We want to apply Proposition \ref{RepAdGrad1} in order to obtain the result. For this purpose, we have to prove that:
\begin{align*}
\int_t^{t+\epsilon}\|I_1(l^H_{s,t})F'(X^H_t)\|^2_{(\mathcal{W}^{1,2})}ds<+\infty.
\end{align*}
By Proposition $1.5.6$ of \cite{N06}, we have:
\begin{align*}
\|I_1(l^H_{s,t})F'(X^H_t)\|_{(\mathcal{W}^{1,2})}\leq C\|I_1(l^H_{s,t})\|_{(\mathcal{W}^{1,4})}\|F'(X^H_t)\|_{(\mathcal{W}^{1,4})},
\end{align*}
Let us estimate $\|I_1(l^H_{s,t})\|_{(\mathcal{W}^{1,4})}$. We have:
\begin{align*}
\|I_1(l^H_{s,t})\|^4_{(\mathcal{W}^{1,4})}=\mathbb{E}[(I_1(l^H_{s,t}))^4]+\mathbb{E}[\|\nabla(I_1(l^H_{s,t}))\|^4_{L^2(\mathbb{R})}]
\end{align*}
Since $I_1(l^H_{s,t})$ is a Gaussian random variable, we have:
\begin{align*}
\mathbb{E}[(I_1(l^H_{s,t}))^4]&=3\mathbb{E}[(I_1(l^H_{s,t}))^2]^2,\\
&=3(\beta(1-H,\frac{H}{2})\int_0^t\int_0^t|s-u|^{H-1}|s-v|^{H-1}|u-v|^{H-1}dudv)^2,\\
&=3(\beta(1-H,\frac{H}{2})K^2_t(s,s))^2.
\end{align*}
Moreover, we have,
\begin{align*}
\|\nabla(I_1(l^H_{s,t}))\|_{L^2(\mathbb{R})}=\|l^H_{s,t}\|_{L^2(\mathbb{R})}=(\beta(1-H,\frac{H}{2})K^2_t(s,s))^{\frac{1}{2}}.
\end{align*}
Thus,
\begin{align*}
\mathbb{E}[\|\nabla(I_1(l^H_{s,t}))\|^4_{L^2(\mathbb{R})}]=\beta(1-H,\frac{H}{2})^2K^2_t(s,s)^2.
\end{align*}
Therefore, we obtain the following equality:
\begin{align*}
&\|I_1(l^H_{s,t})\|_{(\mathcal{W}^{1,4})}=\left(4\beta(1-H,\frac{H}{2})^2(K^2_t(s,s))^2\right)^{\frac{1}{4}},\\
&\|I_1(l^H_{s,t})\|_{(\mathcal{W}^{1,4})}=\sqrt{2\beta(1-H,\frac{H}{2})K^2_t(s,s)}.
\end{align*}
Moreover, since $F$ is an infinitely continuously differentiable function on $\mathbb{R}$ such that $F$ and its derivatives have polynomial growth, $F'(X^H_t)\in (\mathcal{W}^{\infty,\infty})$. Thus, $\|F'(X^H_t)\|_{(\mathcal{W}^{1,4})}$ is finite and independant of $s$. Finally, the kernel $K^2_t$ is continuous on $[t,+\infty)\times[t,+\infty)$. Therefore,
\begin{align*}
\int_{t}^{t+\epsilon}\|I_1(l^H_{s,t})F'(X^H_t)\|^2_{(\mathcal{W}^{1,2})}ds\leq 2C\beta(1-H,\frac{H}{2})\|F'(X^H_t)\|^2_{(\mathcal{W}^{1,4})}\int_{t}^{t+\epsilon}K^2_t(s,s)ds<+\infty.
\end{align*}
This concludes the proof.
\end{proof}
\noindent
A direct application of Proposition \ref{ExAdGrad1} leads to the following equality:
\begin{align*}
\frac{1}{\epsilon}(\nabla^{\frac{H}{2}})^*(\mathbb{I}_{(t,t+\epsilon)}\nabla^{\frac{H}{2}}(F(X^H_t)))=\sqrt{\frac{H(2H-1)}{2}}\frac{2}{\Gamma(\frac{H}{2})^2}\nabla^*\left(F'(X^H_t)\int_t^{t+\epsilon}(r-s)^{\frac{H}{2}-1}_+I_1(l^H_{r,t})\frac{dr}{\epsilon}\right).
\end{align*}

\begin{prop}\label{TraceConvRos}
We have in $(L^2)$:
\begin{align*}
\nabla^{*}\left(\int_a^bF'(X^H_t)\bigg(\int_t^{t+\epsilon}(r-.)_+^{\frac{H}{2}-1}I_1(l^H_{r,t})\frac{dr}{\epsilon}\bigg)dt\right)\underset{\epsilon\rightarrow 0^+}{\longrightarrow}\nabla^{*}\left(\int_a^bF'(X^H_t)(t-.)_+^{\frac{H}{2}-1}I_1(l^H_{t,t})dt\right).
\end{align*}
\end{prop}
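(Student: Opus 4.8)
The plan is to reduce the statement, via the continuity of $\nabla^*$, to a convergence in $L^2(\mathbb{R})\otimes(\mathcal{W}^{1,2})$, and then to run the dominated-convergence scheme already used in the proofs of Theorem \ref{Main1} and Proposition \ref{DivConvRos}, after an extra splitting that disentangles the two sources of randomness in the integrand. Since $\nabla^*$ is linear and continuous from $L^2(\mathbb{R})\otimes(\mathcal{W}^{1,2})$ into $(\mathcal{W}^{0,2})=(L^2)$ by Proposition \ref{ConAdOp}, it suffices to prove
\begin{align*}
\Psi_\epsilon:=\int_a^bF'(X^H_t)\bigg(\int_t^{t+\epsilon}(r-\cdot)_+^{\frac{H}{2}-1}I_1(l^H_{r,t})\frac{dr}{\epsilon}\bigg)dt\underset{\epsilon\rightarrow 0^+}{\longrightarrow}\Psi_0:=\int_a^bF'(X^H_t)(t-\cdot)_+^{\frac{H}{2}-1}I_1(l^H_{t,t})dt
\end{align*}
in $L^2(\mathbb{R})\otimes(\mathcal{W}^{1,2})$, membership of $\Psi_0$ (and of each $\Psi_\epsilon$) in this space being a consequence of the continuity of $I^{H/2}_-(\mathbb{I}_{(a,b)}\,\cdot)\otimes E$ from Lemma \ref{ConI1} and Proposition \ref{ExAdGrad1}, together with the $(\mathcal{W}^{1,2})$-estimates on $I_1(l^H_{s,t})F'(X^H_t)$ obtained in the proof of Proposition \ref{RepRos1}.

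Unlike in the fractional Brownian motion case, the integrand here carries randomness both through $F'(X^H_t)$ and through $I_1(l^H_{r,t})$, so the $(\mathcal{W}^{1,2})$-inner products that arise when one expands a squared norm depend on the inner integration variable. To handle this I would use, for $r\in(t,t+\epsilon)$, the splitting
\begin{align*}
(r-\cdot)_+^{\frac{H}{2}-1}I_1(l^H_{r,t})-(t-\cdot)_+^{\frac{H}{2}-1}I_1(l^H_{t,t})=\big((r-\cdot)_+^{\frac{H}{2}-1}-(t-\cdot)_+^{\frac{H}{2}-1}\big)I_1(l^H_{r,t})+(t-\cdot)_+^{\frac{H}{2}-1}\big(I_1(l^H_{r,t})-I_1(l^H_{t,t})\big),
\end{align*}
which yields $\Psi_\epsilon-\Psi_0=A_\epsilon+B_\epsilon$, and then estimate $\|A_\epsilon\|$ and $\|B_\epsilon\|$ separately in $L^2(\mathbb{R})\otimes(\mathcal{W}^{1,2})$.

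For $A_\epsilon$: expanding $\|A_\epsilon\|^2$ and integrating first in the $L^2(\mathbb{R})$ variable, the beta-function identity recalled in the proof of Theorem \ref{Main1} (with exponent $\gamma=H/2-1\in(-1,-1/2)$) produces the bracket $\beta(\tfrac{H}{2},1-H)\big[|r_1-r_2|^{H-1}-|r_1-t_2|^{H-1}-|t_1-r_2|^{H-1}+|t_1-t_2|^{H-1}\big]$, multiplied by $\langle F'(X^H_{t_1})I_1(l^H_{r_1,t_1});F'(X^H_{t_2})I_1(l^H_{r_2,t_2})\rangle_{(\mathcal{W}^{1,2})}$, integrated over $(t_1,t_2)\in(a,b)^2$ and averaged over $r_i\in(t_i,t_i+\epsilon)$. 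Cauchy--Schwarz in $(\mathcal{W}^{1,2})$, Meyer's inequality (Proposition $1.5.6$ of \cite{N06}), the identity $\|I_1(h)\|_{(\mathcal{W}^{1,4})}=\sqrt{2}\,\|h\|_{L^2(\mathbb{R})}$ and the bound $K^2_t(r,r)\le K^2_t(t,t)$ from Remark \ref{ker}, together with the polynomial growth of $F$ and its derivatives, bound that inner product by $C_H\,G(t_1)G(t_2)$ uniformly in $r_i\in(t_i,t_i+\epsilon)$ for a function $G$ of at most polynomial growth; since the bracket is dominated by $C'|t_1-t_2|^{H-1}$ and tends to $0$ pointwise for $t_1\ne t_2$ (exactly as in the proof of Theorem \ref{Main1}), and $G(t_1)G(t_2)|t_1-t_2|^{H-1}$ is integrable on $(a,b)^2$ because $H-1>-1$, Lebesgue's dominated convergence theorem gives $\|A_\epsilon\|\to0$. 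For $B_\epsilon$: the $L^2(\mathbb{R})$-integration now produces only the fixed kernel $\beta(\tfrac{H}{2},1-H)|t_1-t_2|^{H-1}$, and the remaining $(\mathcal{W}^{1,2})$-inner product is controlled, again by Cauchy--Schwarz and Meyer's inequality, by $C_H\,G(t_1)G(t_2)$ times $\prod_{i=1}^2\sup_{r_i\in(t_i,t_i+\epsilon)}\|l^H_{r_i,t_i}-l^H_{t_i,t_i}\|_{L^2(\mathbb{R})}$. The crucial point is that $\|l^H_{r,t}-l^H_{t,t}\|_{L^2(\mathbb{R})}^2=\beta(\tfrac{H}{2},1-H)\big(K^2_t(r,r)-2K^2_t(r,t)+K^2_t(t,t)\big)\to0$ as $r\to t$, by the continuity of the kernel $K^2_t$ on $[t,+\infty)^2$ stated in Remark \ref{ker}, this convergence being uniform for $r\in(t,t+\epsilon)$ as $\epsilon\to0$; dominated convergence then yields $\|B_\epsilon\|\to0$. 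Combining the two estimates gives $\|\Psi_\epsilon-\Psi_0\|_{L^2(\mathbb{R})\otimes(\mathcal{W}^{1,2})}\to0$, and the continuity of $\nabla^*$ concludes. I expect the main obstacle to be precisely this last continuity of $r\mapsto l^H_{r,t}$ in $L^2(\mathbb{R})$: it genuinely enters the argument (the relevant $(\mathcal{W}^{1,2})$-inner products are not constant in the averaging variable, contrary to the Gaussian case), and it must be paired with a uniform-in-$\epsilon$ bound so that it survives the outer integration over $(a,b)^2$.
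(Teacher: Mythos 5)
Your proposal is correct and follows essentially the same route as the paper: reduce via the continuity of $\nabla^*$ (Proposition \ref{ConAdOp}) to a convergence in $L^2(\mathbb{R})\otimes(\mathcal{W}^{1,2})$, expand the squared norm and integrate out $x$ with the beta-function identity, then conclude by dominated convergence using Cauchy--Schwarz, Meyer's inequality, the bound $K^2_t(r,r)\le K^2_t(t,t)$, polynomial growth of $F'$, $F''$, and the continuity of the kernel $K^2_t$ from Remark \ref{ker}. The only difference is organizational: you split the increment into $A_\epsilon+B_\epsilon$ (varying kernel with merely bounded inner product, fixed kernel with vanishing $\|l^H_{r,t}-l^H_{t,t}\|_{L^2(\mathbb{R})}$), whereas the paper expands the full difference at once and instead establishes the continuity of $(r,s)\mapsto\langle F'(X^H_{t_1})I_1(l^H_{r,t_1});F'(X^H_{t_2})I_1(l^H_{s,t_2})\rangle_{(\mathcal{W}^{1,2})}$ at $(t_1^+,t_2^+)$ --- the same $L^2$-continuity of $r\mapsto l^H_{r,t}$ underlying both arguments.
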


\begin{proof}
 In order to prove the proposition, since the operator $\nabla^*$ is continuous from $L^2(\mathbb{R})\otimes (\mathcal{W}^{1,2})$ to $(L^2)$, we will show that, in $L^2(\mathbb{R})\otimes (\mathcal{W}^{1,2})$:
\begin{align*}
\int_a^bF'(X^H_t)\bigg(\int_t^{t+\epsilon}(r-.)_+^{\frac{H}{2}-1}I_1(l^H_{r,t})\frac{dr}{\epsilon}\bigg)dt\underset{\epsilon\rightarrow 0^+}{\longrightarrow}\int_a^bF'(X^H_t)(t-.)_+^{\frac{H}{2}-1}I_1(l^H_{t,t})dt.
\end{align*}
For this purpose, we denote them respectively by $I_{\epsilon}(.)$ and $I(.)$. We have:
\begin{align*}
\int_{\mathbb{R}}\|I_\epsilon(x)-I(x)\|^2_{(\mathcal{W}^{1,2})}dx=&\int_{\mathbb{R}}\int_{(a,b)\times(a,b)}\int_{t_1}^{t_1+\epsilon}\int_{t_2}^{t_2+\epsilon}\langle F'(X^H_{t_1})[(r-x)^{\frac{H}{2}-1}_+I_1(l^H_{r,t_1})-(t_1-x)^{\frac{H}{2}-1}_+I_1(l^H_{t_1,t_1})];\\
&F'(X^H_{t_2})[(s-x)^{\frac{H}{2}-1}_+I_1(l^H_{s,t_2})-(t_2-x)^{\frac{H}{2}-1}_+I_1(l^H_{t_2,t_2})]\rangle_{(\mathcal{W}^{1,2})}\frac{dsdr}{\epsilon^2}dt_1dt_2dx.
\end{align*}
Integrating with respect to $x$, we obtain:
\begin{align*}
\int_{\mathbb{R}}\|I_\epsilon(x)-I(x)\|^2_{(\mathcal{W}^{1,2})}dx=&\beta(1-H,\frac{H}{2})\int_a^b\int_a^b\int_{t_1}^{t_1+\epsilon}\int_{t_2}^{t_2+\epsilon}\bigg(|r-s|^{H-1}\langle F'(X^H_{t_1})I_1(l^H_{r,t_1});F'(X^H_{t_2})I_1(l^H_{s,t_2})\rangle_{(\mathcal{W}^{1,2})}\\
&-|r-t_2|^{H-1}\langle F'(X^H_{t_1})I_1(l^H_{r,t_1});F'(X^H_{t_2})I_1(l^H_{t_2,t_2})\rangle_{(\mathcal{W}^{1,2})}\\
&-|t_1-s|^{H-1}\langle F'(X^H_{t_1})I_1(l^H_{t_1,t_1});F'(X^H_{t_2})I_1(l^H_{s,t_2})\rangle_{(\mathcal{W}^{1,2})}\\
&+|t_1-t_2|^{H-1}\langle F'(X^H_{t_1})I_1(l^H_{t_1,t_1});F'(X^H_{t_2})I_1(l^H_{t_2,t_2})\rangle_{(\mathcal{W}^{1,2})}\bigg)\frac{dsdr}{\epsilon^2}dt_1dt_2.
\end{align*}
To prove the result, we want to apply Lebesgue dominated convergence theorem. For this purpose, we have to show, at first, that for almost every $(t_1,t_2)\in (a,b)\times (a,b)$:
\begin{align*}
&\int_{t_1}^{t_1+\epsilon}\int_{t_2}^{t_2+\epsilon}\bigg(|r-s|^{H-1}\langle F'(X^H_{t_1})I_1(l^H_{r,t_1});F'(X^H_{t_2})I_1(l^H_{s,t_2})\rangle_{(\mathcal{W}^{1,2})}\\
&-|r-t_2|^{H-1}\langle F'(X^H_{t_1})I_1(l^H_{r,t_1});F'(X^H_{t_2})I_1(l^H_{t_2,t_2})\rangle_{(\mathcal{W}^{1,2})}\\
&-|t_1-s|^{H-1}\langle F'(X^H_{t_1})I_1(l^H_{t_1,t_1});F'(X^H_{t_2})I_1(l^H_{s,t_2})\rangle_{(\mathcal{W}^{1,2})}\\
&+|t_1-t_2|^{H-1}\langle F'(X^H_{t_1})I_1(l^H_{t_1,t_1});F'(X^H_{t_2})I_1(l^H_{t_2,t_2})\rangle_{(\mathcal{W}^{1,2})}\bigg)\frac{dsdr}{\epsilon^2}\underset{\epsilon\rightarrow 0^+}{\longrightarrow} 0
\end{align*}
In order to do so, we will show that $(r,s)\rightarrow \langle F'(X^H_{t_1})I_1(l^H_{r,t_1});F'(X^H_{t_2})I_1(l^H_{s,t_2})\rangle_{(\mathcal{W}^{1,2})}$ is continuous in $(t_1^+,t_2^+)$. Let $(r,s)\in[t_1,+\infty)\times[t_2,+\infty)$, we have:
\begin{align*}
(I)&=|\langle F'(X^H_{t_1})I_1(l^H_{r,t_1});F'(X^H_{t_2})I_1(l^H_{s,t_2})\rangle_{1,2}-\langle F'(X^H_{t_1})I_1(l^H_{t_1,t_1});F'(X^H_{t_2})I_1(l^H_{t_2,t_2})\rangle_{1,2}|,\\
(I)&\leq |\langle F'(X^H_{t_1})[I_1(l^H_{r,t_1})-I_1(l^H_{t_1,t_1})];F'(X^H_{t_2})I_1(l^H_{s,t_2})\rangle_{1,2}|+|\langle F'(X^H_{t_1})I_1(l^H_{t_1,t_1});F'(X^H_{t_2})[I_1(l^H_{s,t_2})-I_1(l^H_{t_2,t_2})]\rangle_{1,2}|,\\
(I)&\leq \|F'(X^H_{t_1})[I_1(l^H_{r,t_1})-I_1(l^H_{t_1,t_1})]\|_{1,2}\|F'(X^H_{t_2})I_1(l^H_{s,t_2})\|_{1,2}+\|F'(X^H_{t_1})I_1(l^H_{t_1,t_1})\|_{1,2}\\
&\times\|F'(X^H_{t_2})[I_1(l^H_{s,t_2})-I_1(l^H_{t_2,t_2})]\|_{1,2}.
\end{align*}
Using Proposition $1.5.6$ of \cite{N06}, we obtain for the first term on the right-hand side of the previous inequality:
\begin{align*}
\|F'(X^H_{t_1})[I_1(l^H_{r,t_1})-I_1(l^H_{t_1,t_1})]\|_{1,2}\|F'(X^H_{t_2})I_1(l^H_{s,t_2})\|_{1,2}&\leq C\|F'(X^H_{t_1})\|_{1,4}\|F'(X^H_{t_2})\|_{1,4}\|[I_1(l^H_{r,t_1})-I_1(l^H_{t_1,t_1})]\|_{1,4}\\
&\times\|I_1(l^H_{s,t_2})\|_{1,4},\\
\end{align*}
Moreover, as in the proof of the previous proposition and since $K^2_{t_2}(s,s)\leq K_{t_2}^2(t_2,t_2)$ for any $s\in [t_2,+\infty)$, we obtain:
\begin{align*}
\|I_1(l^H_{s,t_2})\|_{1,4}&=\sqrt{2}\|l^H_{s,t_2}\|_{L^2(\mathbb{R})}=\sqrt{2\beta(1-H,\frac{H}{2})K^2_{t_2}(s,s)},\\
&\leq C_H\left(K_{t_2}^2(t_2,t_2)\right)^{\frac{1}{2}}=C_H (t_2)^{\frac{3H-1}{2}}\left(K^2_{1}(1,1)\right)^{\frac{1}{2}}.
\end{align*}
Similarly, we have,
\begin{align*}
\|[I_1(l^H_{r,t_1})-I_1(l^H_{t_1,t_1})]\|_{1,4}=\sqrt{2}\|l^H_{r,t_1}-l^H_{t_1,t_1}\|_{L^2(\mathbb{R})}=\sqrt{2\beta(1-H,\frac{H}{2})[K_{t_1}^2(r,r)+K_{t_1}^2(t_1,t_1)-2K^2_{t_1}(r,t_1)]}.
\end{align*}
Thus, we have:
\begin{align*}
(I)&\leq C_H\|F'(X^H_{t_1})\|_{1,4}\|F'(X^H_{t_2})\|_{1,4}(t_2)^{\frac{3H-1}{2}}\left(K^2_{1}(1,1)\right)^{\frac{1}{2}}\sqrt{[K_{t_1}^2(r,r)+K_{t_1}^2(t_1,t_1)-2K^2_{t_1}(r,t_1)]}\\
&+C_H\|F'(X^H_{t_1})\|_{1,4}\|F'(X^H_{t_2})\|_{1,4}(t_1)^{\frac{3H-1}{2}}\left(K^2_{1}(1,1)\right)^{\frac{1}{2}}\sqrt{[K_{t_2}^2(s,s)+K_{t_2}^2(t_2,t_2)-2K^2_{t_2}(s,t_2)]}.
\end{align*}
Since, the kernel $K_t^2(s,r)$ is continuous on $[t,+\infty)\times [t,+\infty)$, we obtain the continuity of $(r,s)\rightarrow \langle F'(X^H_{t_1})I_1(l^H_{r,t_1});F'(X^H_{t_2})I_1(l^H_{s,t_2})\rangle_{(\mathcal{W}^{1,2})}$ in $(t^+_1,t^+_2)$. Consequently, we obtain that for any $t_1\ne t_2$ in $(a,b)\times (a,b)$:
\begin{align*}
&\int_{t_1}^{t_1+\epsilon}\int_{t_2}^{t_2+\epsilon}\bigg(|r-s|^{H-1}\langle F'(X^H_{t_1})I_1(l^H_{r,t_1});F'(X^H_{t_2})I_1(l^H_{s,t_2})\rangle_{(\mathcal{W}^{1,2})}\\
&-|r-t_2|^{H-1}\langle F'(X^H_{t_1})I_1(l^H_{r,t_1});F'(X^H_{t_2})I_1(l^H_{t_2,t_2})\rangle_{(\mathcal{W}^{1,2})}\\
&-|t_1-s|^{H-1}\langle F'(X^H_{t_1})I_1(l^H_{t_1,t_1});F'(X^H_{t_2})I_1(l^H_{s,t_2})\rangle_{(\mathcal{W}^{1,2})}\\
&+|t_1-t_2|^{H-1}\langle F'(X^H_{t_1})I_1(l^H_{t_1,t_1});F'(X^H_{t_2})I_1(l^H_{t_2,t_2})\rangle_{(\mathcal{W}^{1,2})}\bigg)\frac{dsdr}{\epsilon^2}\underset{\epsilon\rightarrow 0^+}{\longrightarrow} 0
\end{align*}
To conclude, we need to check the dominating condition by bounding the previous integral which we denote by $(II)$. To this end, we have to bound, at first:
\begin{itemize}
\item $m_{r,s}=\langle F'(X^H_{t_1})I_1(l^H_{r,t_1});F'(X^H_{t_2})I_1(l^H_{s,t_2})\rangle_{(\mathcal{W}^{1,2})}$, 
\item $m_{r,t_2}=\langle F'(X^H_{t_1})I_1(l^H_{r,t_1});F'(X^H_{t_2})I_1(l^H_{t_2,t_2})\rangle_{(\mathcal{W}^{1,2})}$, 
\item $m_{t_1,s}=\langle F'(X^H_{t_1})I_1(l^H_{t_1,t_1});F'(X^H_{t_2})I_1(l^H_{s,t_2})\rangle_{(\mathcal{W}^{1,2})}$, 
\item $m_{t_1,t_2}=\langle F'(X^H_{t_1})I_1(l^H_{t_1,t_1});F'(X^H_{t_2})I_1(l^H_{t_2,t_2})\rangle_{(\mathcal{W}^{1,2})}$.
\end{itemize}
Then, we have:
\begin{align*}
&|m_{r,s}|\leq C\|F'(X^H_{t_1})\|_{1,4}\|F'(X^H_{t_2})\|_{1,4}\|I_1(l^H_{r,t_1})\|_{1,4}\|I_1(l^H_{s,t_2})\|_{1,4},\\
&|m_{r,s}|\leq C_H\|F'(X^H_{t_1})\|_{1,4}\|F'(X^H_{t_2})\|_{1,4}\sqrt{K^2_{t_1}(r,r)K^2_{t_2}(s,s)},\\
&|m_{r,s}|\leq C_H\|F'(X^H_{t_1})\|_{1,4}\|F'(X^H_{t_2})\|_{1,4}\sqrt{K^2_{t_1}(t_1,t_1)K^2_{t_2}(t_2,t_2)},\\
&|m_{r,s}|\leq C'_H\|F'(X^H_{t_1})\|_{1,4}\|F'(X^H_{t_2})\|_{1,4}(t_1)^{\frac{3H-1}{2}}(t_2)^{\frac{3H-1}{2}}.
\end{align*}
Moreover, for any $i\in \{1,2\}$, we have, by definition:
\begin{align*}
\|F'(X^H_{t_i})\|_{1,4}=\left(\mathbb{E}[(F'(X^H_{t_i}))^4]+\mathbb{E}[\|\nabla(F'(X^H_{t_i}))\|^4_{L^2(\mathbb{R})}]\right)^{\frac{1}{4}}
\end{align*}
By hypothesis, $F$ and all its derivatives have polynomial growth. Then, using Hypercontractivity,
\begin{align*}
&\mathbb{E}[(F'(X^H_{t_i}))^4]\leq C\bigg(1+4Nt_i^H+C^{4N}_2t_i^{2H}+\sum_{p=3}^{4N}C^{4N}_pt_i^{Hp}(p-1)^p\bigg),\\
&\mathbb{E}[(F'(X^H_{t_i}))^4]\leq C P_{N_1}(t^H_i),
\end{align*}
for some $C>0$, $N_1\geq 1$ and $P_{N_1}(.)$ a polynomial of degree $4N_1$ with strictly positive coefficients. Moreover, we have:
\begin{align*}
\|\nabla(F'(X^H_{t_i}))\|_{L^2(\mathbb{R})}&=2|F^{(2)}(X^H_{t_i})|\left(\int_{\mathbb{R}}(I_1(f^H_{t_i}(x,.)))^2dx\right)^{\frac{1}{2}},\\
&=2|F^{(2)}(X^H_{t_i})|\left(I_2(\int_{\mathbb{R}}f_{t_i}^H(x,.)\hat{\otimes}f^H_{t_i}(x,*)dx)+\|f^H_{t_i}\|^2_{L^2(\mathbb{R}^2)}\right)^{\frac{1}{2}},\\
&=2|F^{(2)}(X^H_{t_i})|\left(I_2(\int_{\mathbb{R}}f_{t_i}^H(x,.)\hat{\otimes}f^H_{t_i}(x,*)dx)+\frac{t_i^{2H}}{2}\right)^{\frac{1}{2}}.
\end{align*}
Thus,
\begin{align*}
\mathbb{E}[\|\nabla(F'(X^H_{t_i}))\|^4_{L^2(\mathbb{R})}]&=16\mathbb{E}\bigg[|F^{(2)}(X^H_{t_i})|^4\left(I_2(\int_{\mathbb{R}}f_{t_i}^H(x,.)\hat{\otimes}f^H_{t_i}(x,*)dx)+\frac{t_i^{2H}}{2}\right)^2\bigg],\\
&\leq 32\mathbb{E}\bigg[|F^{(2)}(X^H_{t_i})|^4\left(I_2(\int_{\mathbb{R}}f_{t_i}^H(x,.)\hat{\otimes}f^H_{t_i}(x,*)dx)\right)^2\bigg]\\
&+8t_i^{4H}\mathbb{E}\bigg[|F^{(2)}(X^H_{t_i})|^4\bigg].
\end{align*}
As previously, we obtain, for some $C>0$, $N_2\geq 1$ and $Q_{N_2}$ a polynomial of degree $4N_2$ with strictly positive coefficients:
\begin{align*}
\mathbb{E}\bigg[|F^{(2)}(X^H_{t_i})|^4\bigg]\leq C Q_{N_2}(t^H_i)
\end{align*}
Moreover, by Cauchy-Schwarz inequality, we have:
\begin{align*}
\mathbb{E}\bigg[|F^{(2)}(X^H_{t_i})|^4\left(I_2(\int_{\mathbb{R}}f_{t_i}^H(x,.)\hat{\otimes}f^H_{t_i}(x,*)dx)\right)^2\bigg]&\leq
\mathbb{E}\bigg[|F^{(2)}(X^H_{t_i})|^8\bigg]^{\frac{1}{2}}\mathbb{E}\bigg[\left(I_2(\int_{\mathbb{R}}f_{t_i}^H(x,.)\hat{\otimes}f^H_{t_i}(x,*)dx)\right)^4\bigg]^{\frac{1}{2}},\\
&\leq C (R_{N_3}(t_i^H))^{\frac{1}{2}}\mathbb{E}\bigg[\left(I_2(\int_{\mathbb{R}}f_{t_i}^H(x,.)\hat{\otimes}f^H_{t_i}(x,*)dx)\right)^4\bigg]^{\frac{1}{2}},\\
&\leq C (R_{N_3}(t_i^H))^{\frac{1}{2}}\mathbb{E}\bigg[\left(I_2(\int_{\mathbb{R}}f_{t_i}^H(x,.)\hat{\otimes}f^H_{t_i}(x,*)dx)\right)^2\bigg],\\
&\leq C(R_{N_3}(t_i^H))^{\frac{1}{2}}\bigg(\int_{\mathbb{R}^2}\bigg(\int_{\mathbb{R}}f_{t_i}^H(y,x_1)f^H_{t_i}(y,x_2)dy\bigg)^2dx_1dx_2\bigg),\\
&\leq C_H(R_{N_3}(t_i^H))^{\frac{1}{2}}\bigg(\int_{\mathbb{R}^2}\bigg(\int_0^{t_i}\int_0^{t_i}(r_2-x_2)^{\frac{H}{2}-1}_+(r_1-x_1)^{\frac{H}{2}-1}_+\\
&\times|r_1-r_2|^{H-1}dr_1dr_2\bigg)^2dx_1dx_2\bigg),\\
&\leq C_H(R_{N_3}(t_i^H))^{\frac{1}{2}}\bigg(\int_0^{t_i}\int_0^{t_i}\int_0^{t_i}\int_0^{t_i}|r_1-r_2|^{H-1}|r_2-r_3|^{H-1}\\
&\times|r_3-r_4|^{H-1}|r_4-r_1|^{H-1}dr_1dr_2dr_3dr_4\bigg),\\
&\leq C_H(R_{N_3}(t_i^H))^{\frac{1}{2}}t_i^{4H},
\end{align*}
where $N_3\geq 1$ and $R_{N_3}$ is a polynomial of degree $8N_3$ with strictly positive coefficients. Consequently, we obtain:
\begin{align*}
|m_{r,s}|\leq C'_H\prod_{i=1}^2(t_i)^{\frac{3H-1}{2}}[P_{N_1}(t^H_i)+t^{4H}_iQ_{N_2}(t^H_i)+t^{4H}_i(R_{N_3}(t_i^H))^{\frac{1}{2}}]^{\frac{1}{4}}.
\end{align*}
Similar bounds hold for $|m_{t_1,s}|$, $|m_{r,t_2}|$ and $|m_{t_1,t_2}|$. Therefore, using the fact that $H-1<0$, we have:
\begin{align*}
|(II)|\leq C_H p(t_1,t_2)|t_1-t_2|^{H-1},
\end{align*}
where $p(t_1,t_2)=\prod_{i=1}^2(t_i)^{(3H-1)/2}[P_{N_1}(t^H_i)+t^{4H}_iQ_{N_2}(t^H_i)+t^{4H}_i(R_{N_3}(t_i^H))^{1/2}]^{1/4}$. Standard computations lead to the following bound:
\begin{align*}
\int_{(a,b)\times(a,b)}p(t_1,t_2)|t_1-t_2|^{H-1}dt_1dt_2\leq C_H\|p\|_{\infty,[a,b]\times[a,b]}(b-a)^{H+1}<+\infty.
\end{align*}
This concludes the proof of the proposition.
\end{proof}

\subsubsection{Trace term of order $2$}
\noindent
The strong convergence of the trace term of order $2$ is easier to handle with. As we will see from the next computations, it admits the following representation:
\begin{align*}
\forall \epsilon>0,\ \int_a^b\langle \nabla^{(2)}\big(F(X^H_t)\big); h^H_{t,t+\epsilon}\rangle_{\hat{L}^2(\mathbb{R}^2)}dt=d(H)\int_a^b\Big(\int_{t}^{t+\epsilon}\nabla^{(2),\frac{H}{2}}(F(X^H_t))(s,s,)\frac{ds}{\epsilon}\Big)dt,
\end{align*}
Then, the first step is to compute $\nabla^{(2),H/2}(F(X^H_t))$ as done in the next technical lemma.

\begin{lem}\label{tech2}
We have:
\begin{align*}
d(H)\nabla^{(2),\frac{H}{2}}(&F(X^H_t))(s_1,s_2,\omega)=H(2H-1)K_t^1(s_1,s_2)F'(X^H_t)+4\big(\sqrt{\dfrac{H(2H-1)}{2}}\big)^3K^2_t(s_1,s_2)F^{(2)}(X^H_t)\\&+\dfrac{2d(H)}{(\Gamma(\frac{H}{2}))^2}H(2H-1)I_2(\int_0^t\int_0^t(u-.)^{\frac{H}{2}-1}_+(v-*)^{\frac{H}{2}-1}_+|u-s_1|^{H-1}|v-s_2|^{H-1}dudv)F^{(2)}(X^H_t).
\end{align*}
\end{lem}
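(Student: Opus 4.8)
The plan is to compute $\nabla^{(2)}(F(X^H_t))$ explicitly by the product and chain rules, to expand the resulting product of two first-order Wiener integrals by the multiplication formula, and then to apply $\nabla^{(2),\frac{H}{2}}=(I^{\frac{H}{2}}_+\otimes I^{\frac{H}{2}}_+\otimes E)\circ\nabla^{(2)}$ term by term, collapsing the constants with the explicit value of $d(H)$.

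First I would use that $X^H_t=I_2(f^H_t)$ with symmetric kernel $f^H_t(x_1,x_2)=\frac{d(H)}{\Gamma(\frac{H}{2})^2}\int_0^t(u-x_1)^{\frac{H}{2}-1}_+(u-x_2)^{\frac{H}{2}-1}_+du$, so that $\nabla(X^H_t)(x)=2I_1(f^H_t(x,\cdot))$ and $\nabla^{(2)}(X^H_t)(x_1,x_2)=2f^H_t(x_1,x_2)$. Since $F$ and its derivatives have polynomial growth, $F(X^H_t)\in(\mathcal{W}^{\infty,\infty})$, and Proposition \ref{ProdChain} (applied twice, as in the first-order computation of Lemma \ref{tech1}) gives, in $\hat{L}^2(\mathbb{R}^2)\otimes(L^2)$,
\begin{align*}
\nabla^{(2)}(F(X^H_t))(x_1,x_2)=4F^{(2)}(X^H_t)I_1(f^H_t(x_1,\cdot))I_1(f^H_t(x_2,\cdot))+2F'(X^H_t)f^H_t(x_1,x_2).
\end{align*}
Then, by the multiplication formula for multiple Wiener--It\^o integrals (as used in the proof of Proposition \ref{Decfbm1}), $I_1(f^H_t(x_1,\cdot))I_1(f^H_t(x_2,\cdot))=I_2(f^H_t(x_1,\cdot)\hat{\otimes}f^H_t(x_2,\cdot))+\langle f^H_t(x_1,\cdot),f^H_t(x_2,\cdot)\rangle_{L^2(\mathbb{R})}$, so $\nabla^{(2)}(F(X^H_t))$ becomes the sum of a chaos-$2$ term and a scalar ``trace'' term, both carrying $F^{(2)}(X^H_t)$, plus the deterministic term $2F'(X^H_t)f^H_t(x_1,x_2)$.

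Finally I would apply $\nabla^{(2),\frac{H}{2}}$, which is continuous on the relevant spaces by Theorem \ref{ConOp} and the continuity lemma preceding Proposition \ref{ExGrad2}, and which therefore acts on each of the three terms by applying $I^{\frac{H}{2}}_+$ to each of the two free (spatial) variables of the kernels. Every resulting kernel integral reduces to the Beta-function identity $\int_{\mathbb{R}}(u-y)^{\frac{H}{2}-1}_+(v-y)^{\frac{H}{2}-1}_+dy=\beta(1-H,\tfrac{H}{2})|u-v|^{H-1}$ already exploited in Lemma \ref{tech1}: setting $g^H_t(s,y)=(I^{\frac{H}{2}}_+\otimes E)(f^H_t)(s,y)$, one application gives $g^H_t(s,y)=\frac{d(H)\beta(1-H,\frac{H}{2})}{\Gamma(\frac{H}{2})^3}l^H_{s,t}(y)=\frac{1}{\Gamma(\frac{H}{2})}\sqrt{\frac{H(2H-1)}{2}}\,l^H_{s,t}(y)$, and a second application turns the chaos-$2$ term into a multiple of $I_2(l^H_{s_1,t}\hat{\otimes}l^H_{s_2,t})$, the trace term into a multiple of $\langle l^H_{s_1,t},l^H_{s_2,t}\rangle_{L^2(\mathbb{R})}=\beta(1-H,\tfrac{H}{2})K^2_t(s_1,s_2)$, and the deterministic term into a multiple of $(I^{\frac{H}{2}}_+\otimes I^{\frac{H}{2}}_+)(f^H_t)(s_1,s_2)=\frac{d(H)\beta(1-H,\frac{H}{2})^2}{\Gamma(\frac{H}{2})^4}K^1_t(s_1,s_2)$. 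Multiplying throughout by $d(H)$ and substituting $d(H)=\sqrt{\frac{H(2H-1)}{2}}\,\Gamma(\frac{H}{2})^2/\beta(1-H,\tfrac{H}{2})$, the $K^1_t$-term acquires the prefactor $H(2H-1)$, the $K^2_t$-term the prefactor $4\big(\sqrt{H(2H-1)/2}\big)^3$, and the $I_2$-term the prefactor $\frac{2d(H)}{\Gamma(\frac{H}{2})^2}H(2H-1)$, which is exactly the stated identity. I expect the only delicate points to be the bookkeeping with the powers of $\Gamma(\frac{H}{2})$ and $\beta(1-H,\tfrac{H}{2})$, and the justification that the fractional-integral operators may be commuted through the convergent chaos expansions term by term, which is however guaranteed by Theorem \ref{ConOp} together with Propositions \ref{ExGrad1} and \ref{ExGrad2}.
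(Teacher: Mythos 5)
Your proposal is correct and follows essentially the same route as the paper: the second-order chain/product rule from Proposition \ref{ProdChain}, the multiplication formula $I_1(a)I_1(b)=I_2(a\hat{\otimes}b)+\langle a;b\rangle$, the Beta identity $\int_{\mathbb{R}}(u-y)^{\frac{H}{2}-1}_+(v-y)^{\frac{H}{2}-1}_+dy=\beta(1-H,\frac{H}{2})|u-v|^{H-1}$, and the substitution of $d(H)$ to collapse the constants, with your constants matching the paper's. The only (immaterial) difference is the order of operations: the paper expresses the chain rule directly through $\nabla^{\frac{H}{2}}$ and $\nabla^{(2),\frac{H}{2}}$ via Propositions \ref{ExGrad1} and \ref{ExGrad2} and then applies the multiplication formula, whereas you compute $\nabla^{(2)}(F(X^H_t))$ first and push the fractional integrals through afterwards.
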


\begin{proof}
Using Proposition \ref{ProdChain} and the definition of the operator $\nabla^{(2),\frac{H}{2}}$, we have:
\begin{align*}
\nabla^{(2),\frac{H}{2}}(F(X^H_t))(s_1,s_2,\omega)=\nabla^{(2),\frac{H}{2}}(X^H_t)(s_1,s_2,\omega)F'(X^H_t)+\nabla^{\frac{H}{2}}(X^H_t)(s_1,\omega)\nabla^{\frac{H}{2}}(X^H_t)(s_2,\omega)F^{(2)}(X^H_t),
\end{align*}
Using Proposition \ref{ExGrad1} and Proposition \ref{ExGrad2}, we have:
\begin{align*}
\nabla^{(2),\frac{H}{2}}(F(X^H_t))(s_1,s_2,\omega)=2I^{(\frac{H}{2},\frac{H}{2})}_{++}(f^H_t)(s_1,s_2)F'(X^H_t)+4I_1(I^{(0,\frac{H}{2})}_{++}(f^H_t)(.,s_1))I_1(I^{(0,\frac{H}{2})}_{++}(f^H_t)(.,s_2))F^{(2)}(X^H_t).
\end{align*}
By the multiplication formula from Malliavin calculus (Proposition $1.1.3$ of \cite{N06}), we get:
\begin{align*}
\nabla^{(2),\frac{H}{2}}(F(X^H_t))&(s_1,s_2,\omega)=2I^{(\frac{H}{2},\frac{H}{2})}_{++}(f^H_t)(s_1,s_2)F'(X^H_t)+4[I_2(I^{(0,\frac{H}{2})}_{++}(f^H_t)(.,s_1)\hat{\otimes}I^{(0,\frac{H}{2})}_{++}(f^H_t)(.,s_2))\\
&+<I^{(0,\frac{H}{2})}_{++}(f^H_t)(.,s_1);I^{(0,\frac{H}{2})}_{++}(f^H_t)(.,s_2)>]F^{(2)}(X^H_t),\\
&=2\dfrac{d(H)}{(\Gamma(\frac{H}{2}))^4}\int_0^t\big(\int_{\mathbb{R}}(s_1-x)^{\frac{H}{2}-1}_+(u-x)^{\frac{H}{2}-1}_+dx\big)\big(\int_{\mathbb{R}}(s_2-x)^{\frac{H}{2}-1}_+(u-x)^{\frac{H}{2}-1}_+dx\big)duF'(X^H_t)\\
&+\dfrac{2}{(\Gamma(\frac{H}{2}))^2}H(2H-1)I_2(\int_0^t\int_0^t(u-.)^{\frac{H}{2}-1}_+(v-*)^{\frac{H}{2}-1}_+|u-s_1|^{H-1}|v-s_2|^{H-1}dudv)F^{(2)}(X^H_t)\\
&+2\dfrac{H(2H-1)}{(\Gamma(\frac{H}{2}))^2}\int_0^t\int_0^t\big(\int_{\mathbb{R}}(u-x)^{\frac{H}{2}-1}_{+}(v-x)^{\frac{H}{2}-1}_+dx\big)|s_1-u||s_2-v|dudvF^{(2)}(X^H_t),\\
&=2\dfrac{d(H)\beta^2(1-H,\frac{H}{2})}{(\Gamma(\frac{H}{2}))^4}\int_0^t|u-s_1|^{H-1}|u-s_2|^{H-1}duF'(X^H_t)\\
&+\dfrac{2}{(\Gamma(\frac{H}{2}))^2}H(2H-1)I_2(\int_0^t\int_0^t(u-.)^{\frac{H}{2}-1}_+(v-*)^{\frac{H}{2}-1}_+|u-s_1|^{H-1}|v-s_2|^{H-1}dudv)F^{(2)}(X^H_t)\\
&+2\dfrac{H(2H-1)\beta(1-H,\frac{H}{2})}{(\Gamma(\frac{H}{2}))^2}\int_0^t\int_0^t|u-v|^{H-1}|s_1-u|^{H-1}|s_2-v|^{H-1}dudvF^{(2)}(X^H_t).
\end{align*}
Thus,
\begin{align*}
d(H)\nabla^{(2),\frac{H}{2}}&(F(X^H_t))(s_1,s_2,\omega)=H(2H-1)K^1_t(s_1,s_2)F'(X^H_t)+4\big(\sqrt{\dfrac{H(2H-1)}{2}}\big)^3K^2_t(s_1,s_2)F^{(2)}(X^H_t)\\&+\dfrac{2d(H)}{(\Gamma(\frac{H}{2}))^2}H(2H-1)I_2(\int_0^t\int_0^t(u-.)^{\frac{H}{2}-1}_+(v-*)^{\frac{H}{2}-1}_+|u-s_1|^{H-1}|v-s_2|^{H-1}dudv)F^{(2)}(X^H_t).\
\end{align*}
\end{proof}
\noindent
Then, we consider separately the strong convergence of the appropriate terms coming from the previous decomposition.

\begin{lem}\label{Trace2ConvRos1}
We have  $a.s.$ and in $(L^2)$:
\begin{align*}
&H(2H-1)\int_a^b\big(\int_t^{t+\epsilon}K_t^1(s,s)\dfrac{ds}{\epsilon}\big)F'(X^H_t)dt\underset{\epsilon\rightarrow 0^+}{\longrightarrow}H\int_a^bt^{2H-1}F'(X^H_t)dt,\\
&4\big(\sqrt{\dfrac{H(2H-1)}{2}}\big)^3\int_a^b\big(\int_t^{t+\epsilon}K^2_t(s,s)\dfrac{ds}{\epsilon}\big)F^{(2)}(X^H_t)dt\underset{\epsilon\rightarrow 0^+}{\longrightarrow}\dfrac{H}{2}\kappa_3(X^H_1)\int_a^bt^{3H-1}F^{(2)}(X^H_t)dt.
\end{align*}
\end{lem}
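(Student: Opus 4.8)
The two limits have the same structure, so I would handle them in parallel. For $j\in\{1,2\}$ write $\bar K^j_{t,\epsilon}:=\int_t^{t+\epsilon}K^j_t(s,s)\frac{ds}{\epsilon}$. The plan is: (i) show $\bar K^j_{t,\epsilon}\to K^j_t(t,t)$ as $\epsilon\to0^+$ for every $t\in(a,b)$; (ii) bound $\bar K^j_{t,\epsilon}$ uniformly in $\epsilon$ so that dominated convergence applies, first pathwise (giving the a.s.\ limit of the $dt$-integral) and then on $\Omega$ (giving the $(L^2)$ limit); (iii) compute $K^1_t(t,t)$ and $K^2_t(t,t)$ explicitly and identify the constants.

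For (i): when $s\ge t$ one has $K^1_t(s,s)=\int_0^t(s-u)^{2H-2}du=\frac{s^{2H-1}-(s-t)^{2H-1}}{2H-1}$, which is continuous and non-increasing on $[t,+\infty)$ with $K^1_t(t,t)=\frac{t^{2H-1}}{2H-1}$; hence $\bar K^1_{t,\epsilon}\to K^1_t(t,t)$ and $\bar K^1_{t,\epsilon}\le K^1_t(t,t)$ for all $\epsilon$. For $j=2$, Remark \ref{ker} provides both the continuity of $K^2_t$ on $[t,+\infty)^2$, whence $\bar K^2_{t,\epsilon}\to K^2_t(t,t)$, and the inequality $K^2_t(s,s)\le K^2_t(t,t)$ for $s\ge t$, whence $\bar K^2_{t,\epsilon}\le K^2_t(t,t)$. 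For (ii): the $t$-integrand of the left-hand side is then dominated, uniformly in $\epsilon$, by $C\,t^{2H-1}|F'(X^H_t)|$, respectively $C\,t^{3H-1}|F^{(2)}(X^H_t)|$. Since a.s.\ the path $t\mapsto X^H_t$ is continuous on $[a,b]$ and $F',F^{(2)}$ have polynomial growth, and since $2H-1,3H-1>0$, these dominating functions are pathwise integrable on $(a,b)$, so the dominated convergence theorem in $t$ yields the a.s.\ convergence. For the $(L^2)$ convergence I would apply dominated convergence on $\Omega$ to the squared differences: they tend to $0$ a.s.\ and are bounded by $\big(2C\int_a^b t^{2H-1}|F'(X^H_t)|\,dt\big)^2$, resp.\ with $t^{3H-1}$ and $F^{(2)}$, which lies in $L^1(\Omega)$ by Cauchy--Schwarz together with the finiteness of all moments of $X^H_t$ (a second-chaos random variable).

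It remains to match the constants (step (iii)). For the first limit, $H(2H-1)K^1_t(t,t)=H\,t^{2H-1}$, exactly the claimed integrand. For the second, self-similarity gives $K^2_t(t,t)=t^{3H-1}K^2_1(1,1)$, so I must check $4\big(\sqrt{H(2H-1)/2}\big)^3 K^2_1(1,1)=\tfrac H2\kappa_3(X^H_1)$. Writing $X^H_1=I_2(f^H_1)$ with $f^H_1(x,y)=\frac{d(H)}{\Gamma(H/2)^2}\int_0^1(s-x)^{H/2-1}_+(s-y)^{H/2-1}_+ds$, and using the third-cumulant formula for double Wiener--It\^o integrals $\kappa_3(I_2(f))=8\int_{\mathbb{R}^3}f(x,y)f(y,z)f(z,x)\,dx\,dy\,dz$, the three inner integrations together with $\int_{\mathbb{R}}(s-x)^{H/2-1}_+(u-x)^{H/2-1}_+dx=\beta(1-H,\tfrac H2)|s-u|^{H-1}$ and the identity $d(H)\beta(1-H,\tfrac H2)/\Gamma(\tfrac H2)^2=\sqrt{H(2H-1)/2}$ reduce everything to $\kappa_3(X^H_1)=8\big(\sqrt{H(2H-1)/2}\big)^3 T_3$, where $T_3=\int_{[0,1]^3}|s-u|^{H-1}|u-v|^{H-1}|v-s|^{H-1}\,ds\,du\,dv$. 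Finally $H\,T_3=K^2_1(1,1)$: setting $\Phi(r):=\int_{[0,r]^3}|s-u|^{H-1}|u-v|^{H-1}|v-s|^{H-1}\,ds\,du\,dv$, self-similarity gives $\Phi(r)=r^{3H}T_3$, while differentiating under the integral sign (legitimate since the integrand is nonnegative and integrable for $H>1/3$) and using the symmetry of the integrand gives $\Phi'(r)=3\int_{[0,r]^2}|r-u|^{H-1}|u-v|^{H-1}|v-r|^{H-1}\,du\,dv=3\,K^2_r(r,r)=3\,r^{3H-1}K^2_1(1,1)$; comparing the two formulae for $\Phi'$ yields $H\,T_3=K^2_1(1,1)$, hence $4\big(\sqrt{H(2H-1)/2}\big)^3 K^2_t(t,t)=\tfrac H2\kappa_3(X^H_1)\,t^{3H-1}$, which is the claimed right-hand side.

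The convergence machinery (continuity of the kernels near the diagonal plus dominated convergence in $t$ and in $\omega$) is routine. The step I expect to demand the most care is the constant identification, specifically the reduction of $\kappa_3(X^H_1)$ to the triple integral $T_3$ and the scaling-and-differentiation identity $H\,T_3=K^2_1(1,1)$; getting the Beta-function and $d(H)$ factors to cancel exactly is where attention is needed.
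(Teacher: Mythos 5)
Your proposal is correct and follows essentially the same route as the paper: dominated convergence in $t$ and in $\omega$ using the kernel bounds and continuity from Remark \ref{ker}, then the scaling $K^j_t(t,t)=t^{(j+1)H-1}K^j_1(1,1)$ and the identification $4\big(H(2H-1)/2\big)^{3/2}K^2_1(1,1)=\tfrac H2\kappa_3(X^H_1)$. The only difference is that you explicitly verify the identity $K^2_1(1,1)=H\int_{[0,1]^3}|s-u|^{H-1}|u-v|^{H-1}|v-s|^{H-1}\,ds\,du\,dv$ (via scaling and differentiation) and the cumulant formula $\kappa_3(I_2(f))=8\int f(x,y)f(y,z)f(z,x)\,dx\,dy\,dz$, both of which the paper asserts without proof; your computations are consistent with the stated constants.
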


\begin{proof}
By Remark \ref{ker} and Lebesgue dominated convergence theorem, we have:
\begin{align*}
&H(2H-1)\int_a^b\big(\int_t^{t+\epsilon}K_t^1(s,s)\dfrac{ds}{\epsilon}\big)F'(X^H_t)dt\underset{\epsilon\rightarrow 0^+}{\longrightarrow}H(2H-1)\int_a^bK_t^1(t,t)F'(X^H_t)dt,\\
&4\big(\sqrt{\dfrac{H(2H-1)}{2}}\big)^3\int_a^b\big(\int_t^{t+\epsilon}K^2_t(s,s)\dfrac{ds}{\epsilon}\big)F^{(2)}(X^H_t)dt\underset{\epsilon\rightarrow 0^+}{\longrightarrow}4\big(\sqrt{\dfrac{H(2H-1)}{2}}\big)^3\int_a^bK^2_t(t,t)F^{(2)}(X^H_t)dt.
\end{align*}
By scaling property of the kernels, we have:
\begin{align*}
H(2H-1)\int_a^bK_t^1(t,t)F'(X^H_t)dt&=H(2H-1)\int_0^1(1-x)^{2H-2}dx\int_a^bt^{2H-1}F'(X^H_t)dt,\\
4\big(\sqrt{\dfrac{H(2H-1)}{2}}\big)^3\int_a^bK^2_t(t,t)F^{(2)}(X^H_t)dt&=4\big(\sqrt{\dfrac{H(2H-1)}{2}}\big)^3K^2_1(1,1)\int_a^bt^{3H-1}F^{(2)}(X^H_t)dt.
\end{align*}
Moreover, we note that:
\begin{align*}
K^2_1(1,1)=H\int_0^1\int_0^1\int_0^1|x_1-x_2|^{H-1}|x_2-x_3|^{H-1}|x_3-x_1|^{H-1}dx_1dx_2dx_3.
\end{align*}
Thus,
\begin{align*}
&H(2H-1)\int_a^bK_t^1(t,t)F'(X^H_t)dt=H\int_a^bt^{2H-1}F'(X^H_t)dt,\\
&4\big(\sqrt{\dfrac{H(2H-1)}{2}}\big)^3\int_a^bK^2_t(t,t)F^{(2)}(X^H_t)dt=\dfrac{H}{2}\kappa_3(X^H_1)\int_a^bt^{3H-1}F^{(2)}(X^H_t)dt.\
\end{align*}
\end{proof}
\noindent
Finally, we conclude by two lemmas regarding the strong convergence of the last term of the decomposition of $\nabla^{(2),H/2}(F(X^H_t))$ from Lemma \ref{tech2}.
 
\begin{lem}
Let $t\in (a,b)$. We have:
\begin{align*}
\int_{t}^{t+\epsilon}I_2(\int_0^t\int_0^t(u-.)^{\frac{H}{2}-1}_+(v-*)^{\frac{H}{2}-1}_+|u-s|^{H-1}|v-s|^{H-1}dudv)\dfrac{ds}{\epsilon}\\
\overset{(L^2)}{\underset{\epsilon\rightarrow 0^+}{\longrightarrow}}I_2(\int_0^t\int_0^t(u-.)^{\frac{H}{2}-1}_+(v-*)^{\frac{H}{2}-1}_+|u-t|^{H-1}|v-t|^{H-1}dudv).
\end{align*}
\end{lem}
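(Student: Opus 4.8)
The plan is to reduce the statement to a convergence of kernels in $\hat{L}^2(\mathbb{R}^2)$ through the isometry for double Wiener--It\^o integrals, and then to the continuity of the kernel $K^2_t$ already recorded in Remark~\ref{ker}. For $s\ge t$ set
\begin{align*}
\phi_s(x_1,x_2)=\int_0^t\int_0^t(u-x_1)^{\frac{H}{2}-1}_+(v-x_2)^{\frac{H}{2}-1}_+|u-s|^{H-1}|v-s|^{H-1}\,du\,dv,
\end{align*}
so that the kernel appearing in the lemma is $\phi_s$; it is symmetric in $(x_1,x_2)$ (swap $x_1\leftrightarrow x_2$ and relabel $u\leftrightarrow v$), and the left-hand (resp.\ right-hand) side of the lemma equals $\int_t^{t+\epsilon}I_2(\phi_s)\,\frac{ds}{\epsilon}$ (resp.\ $I_2(\phi_t)$). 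First I would establish the identity
\begin{align*}
\langle\phi_s,\phi_r\rangle_{\hat{L}^2(\mathbb{R}^2)}=\beta(1-H,\tfrac{H}{2})^2\,K^2_t(s,r)^2\qquad(s,r\ge t),
\end{align*}
obtained by integrating out $x_1$ and $x_2$ with the identity $\int_{\mathbb{R}}(u-x)^{\frac{H}{2}-1}_+(u'-x)^{\frac{H}{2}-1}_+\,dx=\beta(1-H,\tfrac{H}{2})\,|u-u'|^{H-1}$ (licit since $\tfrac{H}{2}-1\in(-1,-\tfrac12)$ for $H\in(\tfrac12,1)$) and then recognising the remaining fourfold integral as a product of two copies of $K^2_t(s,r)$. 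In particular $\|\phi_s\|^2_{\hat{L}^2(\mathbb{R}^2)}=\beta(1-H,\tfrac{H}{2})^2K^2_t(s,s)^2\le\beta(1-H,\tfrac{H}{2})^2K^2_t(t,t)^2$ for every $s\ge t$ by Remark~\ref{ker}, so $s\mapsto\phi_s$ is Bochner-integrable on $(t,t+\epsilon)$ into $\hat{L}^2(\mathbb{R}^2)$; a vector-valued Fubini then gives $\int_t^{t+\epsilon}I_2(\phi_s)\,\frac{ds}{\epsilon}=I_2(g_\epsilon)$ with $g_\epsilon:=\frac1\epsilon\int_t^{t+\epsilon}\phi_s\,ds$, and the isometry for $I_2$ yields $\|I_2(g_\epsilon)-I_2(\phi_t)\|^2_{(L^2)}=2\,\|g_\epsilon-\phi_t\|^2_{\hat{L}^2(\mathbb{R}^2)}$.

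It then remains to prove $\|g_\epsilon-\phi_t\|_{\hat{L}^2(\mathbb{R}^2)}\to0$. Writing $g_\epsilon-\phi_t=\frac1\epsilon\int_t^{t+\epsilon}(\phi_s-\phi_t)\,ds$ and applying Jensen's inequality for the probability measure $\frac{ds}{\epsilon}$ on $(t,t+\epsilon)$ gives $\|g_\epsilon-\phi_t\|^2\le\frac1\epsilon\int_t^{t+\epsilon}\|\phi_s-\phi_t\|^2\,ds$, so it suffices to control $\|\phi_s-\phi_t\|^2$ for $s$ near $t^+$. By the factorization above,
\begin{align*}
\|\phi_s-\phi_t\|^2_{\hat{L}^2(\mathbb{R}^2)}=\beta(1-H,\tfrac{H}{2})^2\big(K^2_t(s,s)^2-2K^2_t(s,t)^2+K^2_t(t,t)^2\big),
\end{align*}
which tends to $0$ as $s\to t^+$ since $K^2_t(\cdot,\cdot)$ is continuous on $[t,+\infty)\times[t,+\infty)$ (Remark~\ref{ker}). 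As $s\mapsto\|\phi_s-\phi_t\|^2$ is continuous on $[t,b]$ and vanishes at $s=t$, its average over $[t,t+\epsilon]$ converges to $0$, whence $\|I_2(g_\epsilon)-I_2(\phi_t)\|_{(L^2)}\to0$, which is the assertion of the lemma.

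I expect the only genuine work to be the bookkeeping in the first step: carrying out the two applications of the beta-identity so that the sixfold integral defining $\langle\phi_s,\phi_r\rangle$ collapses exactly into $\beta(1-H,\tfrac{H}{2})^2K^2_t(s,r)^2$, together with checking that the exponent $\tfrac{H}{2}-1$ lies in the admissible range $(-1,-\tfrac12)$. Once that identity is available the convergence is immediate from the continuity and boundedness of $K^2_t$ already recorded in Remark~\ref{ker}; in contrast to the proof of Theorem~\ref{Main1}, no Hardy--Littlewood--Sobolev type estimate is needed here because $t$ is fixed and the only limiting variable is the regularization parameter $\epsilon\to0^+$.
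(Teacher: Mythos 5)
Your proposal is correct and follows essentially the paper's own route: the whole argument rests on the identity $\langle e^H_{s,t},e^H_{s',t}\rangle_{L^2(\mathbb{R}^2)}=\beta(1-H,\tfrac{H}{2})^2\,(K^2_t(s,s'))^2$ together with the continuity and boundedness of $K^2_t$ on $[t,+\infty)^2$ from Remark~\ref{ker}, exactly as in the paper. The only difference is presentational: you pull the time-average inside $I_2$ via a Bochner/stochastic Fubini argument and then use Jensen, whereas the paper expands the second moment directly as a double $ds\,ds'$ integral of these inner products; both reduce to the same limit computation.
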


\begin{proof}
First of all, note that:
\begin{align*}
\int_0^t\int_0^t(u-.)^{\frac{H}{2}-1}_+(v-*)^{\frac{H}{2}-1}_+|u-t|^{H-1}|v-t|^{H-1}dudv\in L^{2}(\mathbb{R}^2).
\end{align*}
Indeed, we have:
\begin{align*}
\int_0^t\int_0^t\int_0^t\int_0^t|u-t|^{H-1}|v-t|^{H-1}|u'-t|^{H-1}|v'-t|^{H-1}|u-u'|^{H-1}|v-v'|^{H-1}dudvdu'dv'=(K^2_t(t,t))^2<\infty.
\end{align*}
Let $\epsilon>0$. We have:
\begin{align*}
\mathbb{E}[\bigg(\int_t^{t+\epsilon}I_2(e_{s,t}^H(.,*))\dfrac{ds}{\epsilon}-&I_2(e^H_{t,t}(.,*))\bigg)^2]=\mathbb{E}[\bigg(\int_t^{t+\epsilon}(I_2(e_{s,t}^H(.,*))-I_2(e^H_{t,t}(.,*)))\dfrac{ds}{\epsilon}\bigg)^2],\\
&=\dfrac{2}{\epsilon^2}\int_t^{t+\epsilon}\int_t^{t+\epsilon}<e_{s,t}^H(.,*)-e^H_{t,t}(.,*);e_{s',t}^H(.,*)-e^H_{t,t}(.,*)>_{L^2(\mathbb{R}^2)}dsds',
\end{align*}
where $e_{s,t}^H(.,*)=\int_0^t\int_0^t(u-.)^{\frac{H}{2}-1}_+(v-*)^{\frac{H}{2}-1}_+|u-s|^{H-1}|v-s|^{H-1}dudv$. Moreover, we have:
\begin{align*}
\dfrac{1}{\epsilon^2}\int_t^{t+\epsilon}\int_t^{t+\epsilon}<e_{s,t}^H(.,*);e_{s',t}^H(.,*)>_{L^2(\mathbb{R}^2)}dsds'=\dfrac{(\beta(1-H,\frac{H}{2}))^2}{\epsilon^2}\int_t^{t+\epsilon}\int_t^{t+\epsilon}(K^2_t(s,s'))^2dsds'.
\end{align*}
Thus,
\begin{align*}
\mathbb{E}[\bigg(\int_t^{t+\epsilon}I_2(e_{s,t}^H(.,*))&\dfrac{ds}{\epsilon}-I_2(e^H_{t,t}(.,*))\bigg)^2]=\dfrac{2(\beta(1-H,\frac{H}{2}))^2}{\epsilon^2}\int_t^{t+\epsilon}\int_t^{t+\epsilon}(K^2_t(s,s'))^2dsds'\\
&-\dfrac{4(\beta(1-H,\frac{H}{2}))^2}{\epsilon}\int_t^{t+\epsilon}(K^2_t(t,s))^2ds+2(\beta(1-H,\frac{H}{2}))^2(K^2_t(t,t))^2.
\end{align*}
The continuity of $K^2_t(.,.)$ on $[t,+\infty)\times[t,+\infty)$ concludes the proof. 
\end{proof}

\begin{lem}\label{Trace2ConvRos2}
We have:
\begin{align*}
\int_a^b\big(\int_t^{t+\epsilon}I_2(e_{s,t}^H(.,*))\dfrac{ds}{\epsilon}\big)F^{(2)}(X^H_t)dt\overset{(L^1)}{\underset{\epsilon\rightarrow 0^+}{\longrightarrow}}\int_a^bI_2(e_{t,t}^H(.,*))F^{(2)}(X^H_t)dt.
\end{align*}
\end{lem}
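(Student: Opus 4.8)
The plan is to deduce the $(L^1)$ convergence from the pointwise $(L^2)$ convergence proved in the previous lemma, combined with a uniform-in-$\epsilon$ domination and Lebesgue's dominated convergence theorem applied to the $t$-integral over the bounded interval $(a,b)$. First I would use Fubini and the triangle inequality for the $(L^1)$-norm, followed by the Cauchy--Schwarz inequality on $(\Omega,\mathcal{G},\mathbb{P})$, to write
\begin{align*}
&\mathbb{E}\bigg[\bigg|\int_a^b\Big(\int_t^{t+\epsilon}I_2(e_{s,t}^H(.,*))\tfrac{ds}{\epsilon}-I_2(e_{t,t}^H(.,*))\Big)F^{(2)}(X^H_t)\,dt\bigg|\bigg]\\
&\qquad\leq\int_a^b\mathbb{E}\bigg[\Big(\int_t^{t+\epsilon}I_2(e_{s,t}^H(.,*))\tfrac{ds}{\epsilon}-I_2(e_{t,t}^H(.,*))\Big)^2\bigg]^{\frac12}\,\mathbb{E}\big[F^{(2)}(X^H_t)^2\big]^{\frac12}\,dt.
\end{align*}
By the previous lemma, the first factor in the integrand converges to $0$ as $\epsilon\to0^+$ for every fixed $t\in(a,b)$, so it only remains to exhibit an $\epsilon$-independent, $t$-integrable majorant of the integrand.

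For the first factor I would reuse the explicit identity established in the proof of the previous lemma, namely
\begin{align*}
\mathbb{E}\bigg[\Big(\int_t^{t+\epsilon}I_2(e_{s,t}^H(.,*))\tfrac{ds}{\epsilon}-I_2(e_{t,t}^H(.,*))\Big)^2\bigg]&=\frac{2(\beta(1-H,\tfrac H2))^2}{\epsilon^2}\int_t^{t+\epsilon}\int_t^{t+\epsilon}(K^2_t(s,s'))^2\,dsds'\\
&-\frac{4(\beta(1-H,\tfrac H2))^2}{\epsilon}\int_t^{t+\epsilon}(K^2_t(t,s))^2\,ds+2(\beta(1-H,\tfrac H2))^2(K^2_t(t,t))^2,
\end{align*}
and bound every value $K^2_t(s,s')$ entering this expression by $K^2_t(t,t)$. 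This bound follows from the Cauchy--Schwarz inequality relative to the positive semi-definite form with kernel $|u-u'|^{H-1}$ on $L^2((0,t))$, which gives $K^2_t(s,s')\leq\sqrt{K^2_t(s,s)K^2_t(s',s')}$, together with the monotonicity recorded in Remark \ref{ker}, namely $K^2_t(s,s)\leq K^2_t(t,t)$ for $s\geq t$. Hence the first factor is at most $8(\beta(1-H,\tfrac H2))^2(K^2_t(t,t))^2=C_H\,t^{2(3H-1)}(K^2_1(1,1))^2$ by the scaling $K^2_t(t,t)=t^{3H-1}K^2_1(1,1)$. For the second factor, the polynomial growth of $F^{(2)}$ combined with hypercontractivity and the fact that the moments of $X^H_t$ are powers of $t^H$ yield $\mathbb{E}[F^{(2)}(X^H_t)^2]^{1/2}\leq P(t^H)$ for some polynomial $P$ with nonnegative coefficients, exactly as in the estimates used in the proof of Proposition \ref{TraceConvRos}.

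Combining the two bounds, the integrand is dominated by $t\mapsto C_H\,t^{3H-1}K^2_1(1,1)\,P(t^H)$, which is continuous and, since $3H-1>1/2>0$ and $(a,b)\subset\mathbb{R}_+$ is bounded, integrable on $(a,b)$; in particular the majorant has no singularity at $t=0$. Lebesgue's dominated convergence theorem then shows that the displayed $(L^1)$-norm tends to $0$ as $\epsilon\to0^+$, which is the assertion. The only genuinely delicate point in this argument is the uniform-in-$\epsilon$ control of the $(L^2)$-distance, and this is supplied entirely by the pointwise kernel bound $K^2_t(s,s')\leq K^2_t(t,t)$ together with the continuity of $K^2_t$; everything else is routine.
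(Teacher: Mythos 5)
Your argument is correct and follows exactly the route the paper sketches for this lemma: Cauchy--Schwarz in $t$, the pointwise $(L^2)$ convergence from the previous lemma, the kernel bound $K^2_t(s,s')\le\sqrt{K^2_t(s,s)K^2_t(s',s')}\le K^2_t(t,t)$ (which can also be read off from the identity $\beta(1-H,\tfrac{H}{2})K^2_t(s,s')=\langle l^H_{s,t};l^H_{s',t}\rangle_{L^2(\mathbb{R})}$ already used in Proposition \ref{RepRos1}, giving the positive semi-definiteness you invoke), the scaling $K^2_t(t,t)=t^{3H-1}K^2_1(1,1)$, polynomial growth of $F^{(2)}$, and dominated convergence on the bounded interval $(a,b)$. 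Since the paper only states that the result "follows from standard estimates, the previous Lemma, properties of the kernel and dominated convergence," your proposal is essentially the intended proof made explicit.
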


\begin{proof}
This follows from standard estimates, the previous Lemma, properties of the kernel $K_t^2(.,.)$ on $[t,+\infty)\times[t,+\infty)$ and the Lebesgue dominated convergence theorem.
\end{proof}

\section*{Acknowledgements}
 
Part of this work was done when the author was working at MICS laboratory and at Ceremade. The author acknowledges these institutions for providing suitable environment of research.

\def\cprime{$'$}

\end{document}